\documentclass[a4paper,oneside,11pt, reqno]{amsproc}

\usepackage{mathrsfs}
\usepackage{amsfonts,amssymb,amsmath,amsthm}
\usepackage{url}
\usepackage{enumerate}

\setcounter{tocdepth}{1}

\usepackage{graphicx}
\usepackage{bbm}
\usepackage{float}
\usepackage{pstricks}
\usepackage{amscd}
\usepackage{amsmath}
\usepackage{amsxtra}
\usepackage[T1]{fontenc}
\usepackage[utf8]{inputenc}
\usepackage{lipsum}
\usepackage{tikz}
\usetikzlibrary{arrows}
\usetikzlibrary{calc}
\usepackage{hyperref}

\theoremstyle{plain}
\newtheorem{theorem}{Theorem}[section]

\newtheorem{lemma}[theorem]{Lemma}
\newtheorem{proposition}[theorem]{Proposition}
\newtheorem{corollary}[theorem]{Corollary}

\theoremstyle{definition}
\newtheorem{definition}[theorem]{Definition}

\newtheorem{question}[theorem]{Question}
\newtheorem{example}[theorem]{Example}

\theoremstyle{remark}
\newtheorem{remark}[theorem]{Remark}

\newtheorem*{acknowledgments}{Acknowledgments}

\numberwithin{equation}{section}


\newcommand{\mf}{\mathfrak}
\newcommand*{\Ge}{\geqslant}
\newcommand*{\Le}{\leqslant}

\usepackage{etoolbox}

\makeatletter
\@namedef{subjclassname@2020}{%
  \textup{2020} Mathematics Subject Classification}
\makeatother

\usepackage{mathrsfs}
\usepackage{epsfig}
\usepackage[active]{srcltx}

\newcommand{\ncom}{\newcommand}
\ncom{\bq}{\begin{equation}}
\ncom{\eq}{\end{equation}}
\ncom{\beqn}{\begin{eqnarray*}}
\ncom{\eeqn}{\end{eqnarray*}}
\ncom{\beq}{\begin{eqnarray}}
\ncom{\eeq}{\end{eqnarray}}
\ncom{\nno}{\nonumber}
\ncom{\rar}{\rightarrow}
\ncom{\Rar}{\Rightarrow}
\ncom{\noin}{\noindent}
\ncom{\bc}{\begin{centre}}
\ncom{\ec}{\end{centre}}
\ncom{\sz}{\scriptsize}
\ncom{\rf}{\ref}
\ncom{\sgm}{\sigma}
\ncom{\Sgm}{\Sigma}
\ncom{\dt}{\delta}
\ncom{\Dt}{Delta}
\ncom{\lmd}{\lambda}
\ncom{\Lmd}{\Lambda}
\ncom{\eps}{\epsilon}
\ncom{\pcc}{\stackrel{P}{>}}
\ncom{\dist}{{\rm\,dist}}
\ncom{\sspan}{{\rm\,span}}
\ncom{\im}{{\rm Im\,}}
\ncom{\sgn}{{\rm sgn\,}}
\ncom{\ba}{\begin{array}}
\ncom{\ea}{\end{array}}
\ncom{\eop}{\hfill{{\rule{2.5mm}{2.5mm}}}}
\ncom{\eoe}{\hfill{{\rule{1.5mm}{1.5mm}}}}
\ncom{\eof}{\hfill{{\rule{1.5mm}{1.5mm}}}}
\ncom{\hone}{\mbox{\hspace{1em}}}
\ncom{\htwo}{\mbox{\hspace{2em}}}
\ncom{\hthree}{\mbox{\hspace{3em}}}
\ncom{\hfour}{\mbox{\hspace{4em}}}
\ncom{\hsev}{\mbox{\hspace{7em}}}
\ncom{\vone}{\vskip 2ex}
\ncom{\vtwo}{\vskip 4ex}
\ncom{\vonee}{\vskip 1.5ex}
\ncom{\vthree}{\vskip 6ex}
\ncom{\vfour}{\vspace*{8ex}}
\ncom{\norm}{\|\;\;\|}
\ncom{\integ}[4]{\int_{#1}^{#2}\,{#3}\,d{#4}}
\ncom{\inp}[2]{\langle{#1},\,{#2} \rangle}
\ncom{\Inp}[2]{\Langle{#1},\,{#2} \Langle}
\ncom{\vspan}[1]{{{\rm\,span}\#1 \}}}
\ncom{\dm}[1]{\displaystyle {#1}}



\begin{document}

\title[Joint complete monotonicity of rational functions]{Joint complete monotonicity of rational functions in two variables and toral $m$-isometric pairs}

\author[A. Anand, S. Chavan and R. Nailwal]{Akash Anand, Sameer Chavan and Rajkamal Nailwal}

\address{Department of Mathematics and Statistics\\
Indian Institute of Technology Kanpur, India}
   \email{akasha@iitk.ac.in}
  \email{chavan@iitk.ac.in}
    \email{rnailwal@iitk.ac.in}

\dedicatory{Dedicated to Professor Jan Stochel
on the occasion of his $70$th birthday}

\keywords{rational function, joint completely monotone, Bessel function, toral $m$-isometry, Cauchy dual}

\subjclass[2020]{Primary 44A60, 47A13; Secondary 47B37, 47B39}

\begin{abstract} 
We discuss the problem of classifying polynomials $p : \mathbb R^2_+ \rar (0, \infty)$  for which 
$\frac{1}{p}=\{\frac{1}{p(m, n)}\}_{m, n \Ge 0}$ is joint completely monotone, where $p$ is a linear polynomial in $y.$ 
We show that if 
$p(x, y)=a+b x+c y+d xy$ with $a > 0$ and $b, c, d \Ge 0,$ then 
$\frac{1}{p}$ is joint completely monotone  if and only if $a d - b c \Le 0.$  
We also present an application to the Cauchy dual subnormality problem for toral $3$-isometric weighted $2$-shifts.   
\end{abstract}

\maketitle


\section{A two-dimensional Hausdorff moment problem}

Let $\mathbb Z_+$ denote the set of nonnegative integers and 
let $\mathbb R_+$ denote the set of nonnegative real numbers. For a positive integer $n$ and a set $X,$ let $X^n$ stand for the $n$-fold Cartesian product of $X$ with itself. Let $\alpha = (\alpha_1, \ldots, \alpha_n), \beta
= (\beta_1, \ldots, \beta_n) \in \mathbb{Z}^n_+.$ Let $|\alpha|$ denote the sum $\alpha_1 + \cdots + \alpha_n$ and set $(\beta)_\alpha = \prod_{j=1}^n (\beta_j)_{\alpha_j},$ where $(\beta_j)_{0}=1,$ $(\beta_j)_{1}=\beta_j$ and 
 $$(\beta_j)_{\alpha_j} = \beta_j(\beta_j -1) \cdots (\beta_j -\alpha_j+1), \quad \alpha_j \Ge 2, ~j=1, \ldots, n.$$ 
We write
$\alpha \Le \beta$ if $\alpha_j \Le \beta_j$ for every $j=1, \ldots, n.$
For $\alpha \Le \beta$, we let $\binom{\beta}{\alpha}=\prod_{j=1}^n \binom{\beta_j}{\alpha_j}.$

For a net $\{a_\alpha\}_{\alpha \in \mathbb Z^n_+}$ and 
$j=1, \ldots, n,$ let $\triangle_j$ denote the {\it forward difference operator} given by
\beqn
\triangle_j a_\alpha = a_{\alpha + \varepsilon_j} - a_\alpha, \quad \alpha \in \mathbb Z^n_+,
\eeqn
where $\varepsilon_j$ stands for the $n$-tuple with $j$th entry equal to $1$ and $0$ elsewhere. Note that
the linear operators $\triangle_1, \ldots, \triangle_n$ are mutually commuting.
For $\alpha=(\alpha_1, \ldots, \alpha_n) \in \mathbb Z^n_+,$ let $\triangle^\alpha$ denote the operator $\prod_{j=1}^n \triangle^{\alpha_j}_j.$
For a function $f : \mathbb R^n_+ \rar (0, \infty)$ and 
$j=1, \ldots, n,$ let $\partial_jf$ be the $j$th partial derivative of $f$ whenever it exists. 
Note that 
the linear operators $\partial_1, \ldots, \partial_n$ are mutually commuting on the space of infinitely differentiable functions  from $\mathbb R^n_+$ into $(0, \infty).$ 
For $\alpha=(\alpha_1, \ldots, \alpha_n) \in \mathbb Z^n_+,$ let $\partial^\alpha$ denote the operator $\prod_{j=1}^n \partial^{\alpha_j}_j.$
Let $\mathbb R[x_1, \ldots, x_n]$ (for short, $\mathbb R[x]$) denote the ring of polynomials in the real variables $x_1, \ldots, x_n.$   
A polynomial $p \in \mathbb R[x]$ is said to be of {\it multi-degree} $\alpha=(\alpha_1, \ldots, \alpha_n) \in \mathbb Z^n_+$ if for each $j=1, \ldots, n,$ $\alpha_j$ is the largest integer for which $\partial^{\alpha_j}_j p \neq 0.$ 
For $\beta \in \mathbb Z^n_+,$ we say that {\it $p$ is of multi-degree at most $\beta$} if the multi-degree of $p$ is $\alpha$ and $\alpha \Le \beta.$ For a polynomial $p \in \mathbb R[x_1],$ let $\deg p$ denote the degree of $p.$ 
For the later purpose, we record the following fact, which is a consequence of \cite[Proposition 2.1]{JJS2020}. 
\beq \label{fact-m-iso}
\mbox{$\triangle^\beta p=0$ if $p$ is a polynomial of multi-degree $\gamma$ and $|\gamma| < |\beta|$}. 
\eeq 
For a set $X$ and a subset $\Omega$ of  $X,$ let $\mathbbm{1}_{_\Omega}$ denote the indicator function of $\Omega.$ For a point $x \in X,$ let $\delta_x$ denote the {\it Dirac delta measure} with point mass at $x.$ Recall that the $n$-th moment of the {\it multiplicative convolution} $\mu \diamond \nu$ of finite signed Borel measures $\mu$ and $\nu$ is the product of the $n$-th moments of $\mu$ and $\nu$ (see \cite[p.~944]{BD2005}): For a Borel measurable subset $\Omega$ of $[0, 1],$  
\beqn
\int_{[0, 1]} \mathbbm{1}_{_\Omega}(t)\mu \diamond \nu(dt) = \int_{[0, 1]} \int_{[0, 1]} \mathbbm{1}_{_\Omega}(xy)\mu(dx)\nu(dy).
\eeqn

We recall below some notions relevant for the present investigations$:$
\begin{enumerate}
\item 
 A net $\mf a = \{a_\alpha\}_{\alpha \in \mathbb{Z}^n_+}$ is said to be {\it joint completely monotone} if
 \beqn
 (-1)^{|\beta|} \triangle^{\beta} a_{\alpha} \Ge 0,  \quad \beta \in \mathbb Z^n_+, ~\alpha \in \mathbb Z^n_+.
 \eeqn
When $n=1,$ we simply refer to $\mf a$ as a {\it completely monotone} sequence. 
Following \cite{Athavale2001, W1941}, we say that a joint completely monotone net $\mf a$ is {\it minimal} if for every $j=1, \ldots, n$ and for every $\epsilon >0,$  
$\{a_{k \varepsilon_j} - \epsilon \mathbbm{1}_{_{\{0\}}}(k)\}_{k \in \mathbb Z_+}$ 
is not a completely monotone sequence.
\item An infinitely real differentiable function $f  : \mathbb R^n_+ \rightarrow (0,\infty)$ is said to be {\it joint completely monotone} if 
\beqn
    (-1)^{|\beta|}\big(\partial^{\beta} f\big)(x)\Ge 0, \quad \beta \in \mathbb Z^n_+, ~x \in \mathbb R^n_+.
\eeqn
When $n=1,$ we simply refer to $f$ as {\it completely monotone}. 
We say that $f$ is {\it separate completely monotone} if for every $x=(x_1, \ldots, x_n)$ in $\mathbb R^n_+$ and $j=1, \ldots, n,$ the function $$t \mapsto f(x_1, \ldots, x_{j-1}, t, x_{j+1}, \ldots, x_n)~\text{is completely monotone.}$$ 
\end{enumerate}
\begin{remark} \label{JCM-rmk}
Let  $f  : \mathbb R^n_+ \rightarrow (0,\infty)$ be joint completely monotone. Note that for $j = 1,\ldots, n$ and for fixed numbers $x_1,\ldots, x_{j-1}, x_{j+1},\ldots, x_n \in \mathbb R_+,$ 
\beqn
    (-1)^{m_j}\partial^{m_j}_j f(x_1, \ldots, x_{j-1}, y, x_{j+1}, \ldots, x_n) \Ge 0,  \quad y \in \mathbb{R}_+.
\eeqn
Thus $f(x_1, \ldots, x_{j-1}, \cdot, x_{j +1}, \ldots, x_n)$ is completely monotone, and since $j$ is arbitrary, $f$ is separate completely monotone.
A similar remark applies to the joint completely monotone nets. 
\hfill $\diamondsuit$
\end{remark}

By the solution of the multi-dimensional Hausdorff moment problem (see \cite[Proposition~4.6.11]{BCR1984}), $\mf a = \{a_\alpha\}_{\alpha \in \mathbb{Z}^n_+}$ is a joint completely monotone net if and only if it is a {\it Hausdorff moment net}, that is, if there exists a finite positive regular Borel measure $\mu$ concentrated on $[0,1]^n$ such that 
\beqn
a_\alpha =\int _{[0,1]^n} t^{\alpha} \mu(dt), \quad \alpha \in \mathbb Z^n_+.
\eeqn 
If such a measure $\mu$ exists, then it is unique. This is a consequence of the $n$-dimensional Weierstrass theorem and the Riesz representation theorem (see  \cite[Theorem~2.14]{R2006} and \cite[Lemma 4.11.3]{S2015}). 
We refer to $\mu$ as the {\it representing measure} of $\mf a.$
For more information on joint completely monotone functions and its variants, the reader is referred to \cite{BCR1984, R2014, SSV}.

The investigations in this paper are motivated by the following multi-variable analog of \cite[Question~1.8]{AC2017}$:$

\begin{question} \label{Q1} Let $n$ be a positive integer. 
For which polynomials  $p : \mathbb R^n_+ \rar (0, \infty),$ the net $\big\{\frac{1}{p(\alpha)}\big\}_{\alpha \in \mathbb Z^n_+}$ is joint completely monotone (resp. minimal joint completely monotone)? If $\big\{\frac{1}{p(\alpha)}\big\}_{\alpha \in \mathbb Z^n_+}$ is joint completely monotone, then what is the representing measure of $\big\{\frac{1}{p(\alpha)}\big\}_{\alpha \in \mathbb Z^n_+}$?
\end{question}

A motivation for this moment problem comes from the Cauchy dual subnormality problem for toral $m$-isometries (see \cite{AC2017, ACJS, AS1999, BS2019, CC2012, RZ2019}; the reader is referred to  \cite{Ag1990, AS1995, Athavale2001, AS1999, CC2012, JJS2020, S2001} for the basic theory of toral $m$-isometries).  
Before we state the main result of this paper providing a partial answer to the aforementioned question in the case of $n=2,$ we discuss a couple of instructive nonexamples$:$ 

\begin{example} \label{exam-1.3-i}
 Let $p : \mathbb R^2_+ \rightarrow (0,\infty)$ be given polynomial. 
Let us see some situations where $\frac{1}{p}$ fails to be joint completely monotone$:$
\begin{enumerate}
\item Consider $p(x,y)=a+bx+cx^2+dy,$ $x, y \in \mathbb R_+.$ A routine calculation shows that $a > 0$ and $c, d \Ge 0.$ If $d \neq 0,$ then for some $y_0 \in \mathbb R_+$, $p_{y_0}:=p(\cdot,y_0)$ has complex roots (since $b^2- 4 (a+dy)c < 0$ for large values of $y$).  Hence, by \cite[Proposition~4.3]{AC2017}, $\frac{1}{p}$ is never separate completely monotone, and hence, by Remark~\ref{JCM-rmk}, it is not joint completely monotone.
\item 
Consider $p(x, y)=1+y+xy,$ $x, y \in \mathbb R_+.$  Since the reciprocal of  any polynomial from $\mathbb R_+$ into $(0, \infty)$ of degree $1$ is completely monotone, $\frac{1}{p}$ is separate completely monotone. Note that $$\partial_1 \partial_2\Big(\frac{1}{p}\Big)(x, y)=\frac{yx+y-1}{(1+y+yx)^3}, \quad x, y \in \mathbb R_+.$$ Clearly, for $x, y \in (0, 1/2),$ $\partial_1 \partial_2(\frac{1}{p})(x, y) < 0,$ and hence $\frac{1}{p}$ is not joint completely monotone.
\end{enumerate}
Thus, the joint complete monotonicity of $\frac{1}{p}$ may fail for a polynomial of multi-degree $(1, 1).$
\eof
\end{example}

The above example raises the problem of describing all polynomials $p : \mathbb R^2_+ \rar (0, \infty)$ of bi-degree $(1, 1)$ for which the net 
$\big\{\frac{1}{p(\alpha)}\big\}_{\alpha \in \mathbb Z^2_+}$ is joint completely monotone. Needless to say, the following result leads to a complete solution to this intermediate problem (see Theorem~\ref{deg-1-1}).

\begin{theorem}[Main theorem] \label{coro-main}
For $a_j, b_j \in (0, \infty),$ $j=0, \ldots, k,$ let  
\beq 
\label{a-and-b}
a(x)= a_0 \prod_{j=1}^k (x+a_j), ~b(x)= b_0 \prod_{j=1}^k (x+b_j), \quad x \in \mathbb{R}_+.
\eeq
The following statements are valid$:$
\begin{enumerate}
\item[$\mathrm{(i)}$] 
$\big\{\frac{1}{b(m) + a(m)n}\big\}_{m,n \in \mathbb{Z}_+}$ is a minimal joint completely monotone net if  
\beq \label{sufficient-c-2}
b_1 \Le a_1 \Le b_2 \Le a_2 \Le \ldots \Le b_k \Le a_k,
\eeq
\item[$\mathrm{(ii)}$] if 
$\big\{\frac{1}{b(m) + a(m)n}\big\}_{m,n \in \mathbb{Z}_+}$ is a joint completely monotone net, then 
\beq \label{necessary-c-2}
&& \displaystyle \sum_{j=1}^k  \frac{1}{a_j}  \,  \Le \, \sum_{j=1}^k  \frac{1}{b_j}, \\  && \prod_{j=1}^k b_j  \, \Le \,  \prod_{j=1}^k a_j, 
\label{necessary-c-3}
\\
&& \sum_{j=1}^{k}b_j \, \Le \, \sum_{j=1}^{k} a_j. \label{necessary-c-1}
\eeq
\end{enumerate}
\end{theorem}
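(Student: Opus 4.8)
The plan is to handle the two assertions of the theorem separately: the sufficiency part $\mathrm{(i)}$ by producing the representing measure explicitly, and the necessity part $\mathrm{(ii)}$ by extracting information from it.

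\textbf{Part $\mathrm{(i)}$, the moment representation.} I would start from the elementary identity
\[
\frac{1}{b(m)+a(m)n}=\frac{1}{a(m)}\cdot\frac{1}{n+c(m)}=\frac{1}{a(m)}\int_0^1 t^{\,n+c(m)-1}\,dt,\qquad c(m):=\frac{b(m)}{a(m)}>0,
\]
and analyse $t^{\,c(m)-1}$ as a function of $m$. Using the partial fraction expansion $c(x)=\frac{b_0}{a_0}-\sum_{j=1}^k\frac{\gamma_j}{x+a_j}$, a sign computation on the residues shows that the interlacing hypothesis \eqref{sufficient-c-2} forces $\gamma_j\ge 0$ for all $j$: under strict interlacing the residue at $-a_j$ has sign $(-1)^j/(-1)^{j-1}=-1$, and the general case follows by continuity. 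Hence, for $t\in(0,1)$,
\[
t^{\,c(m)-1}=t^{\,b_0/a_0-1}\prod_{j=1}^k\exp\!\Big(\frac{\sigma_j(t)}{m+a_j}\Big),\qquad \sigma_j(t):=-\gamma_j\ln t\ge 0 .
\]
For fixed $t$ each factor is a completely monotone sequence in $m$: $\exp(\sigma/(m+a))=\sum_{\ell\ge0}\frac{\sigma^\ell}{\ell!}\frac{1}{(m+a)^\ell}$ is a nonnegative, summable-mass combination of the completely monotone sequences $\{1/(m+a)^\ell\}_m$, and $\{1/a(m)\}_m$ is completely monotone as a multiplicative convolution of the completely monotone sequences $\{1/(m+a_j)\}_m$. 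Since pointwise products of Hausdorff moment sequences on $[0,1]$ are again such (multiplicative convolution of the representing measures), for each $t$ the sequence $m\mapsto\frac{1}{a(m)}\prod_j\exp(\sigma_j(t)/(m+a_j))$ has a representing measure $\lambda_t$ on $[0,1]$. A Fubini argument (the total mass $\int_0^1 t^{\,b_0/a_0-1}\lambda_t([0,1])\,dt=\tfrac{1}{b(0)}$ is finite, and $t\mapsto\lambda_t$ is weakly continuous since its moments are continuous in $t$) then gives
\[
\frac{1}{b(m)+a(m)n}=\int_{[0,1]^2}s^m t^n\,d\mu(s,t),\qquad d\mu(s,t)=t^{\,b_0/a_0-1}\,d\lambda_t(s)\,dt ,
\]
so the net is a Hausdorff moment net, i.e. joint completely monotone; this also exhibits the representing measure explicitly.

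\textbf{Part $\mathrm{(i)}$, minimality.} Here I would invoke the standard criterion (in the spirit of \cite{Athavale2001, W1941}): a completely monotone sequence $\{c_k\}$ has $\{c_k-\epsilon\mathbbm{1}_{\{0\}}(k)\}_k$ completely monotone for some $\epsilon>0$ if and only if its representing measure has an atom at $0$. The two coordinate sections are $\{1/b(m)\}_m$, with representing measure $\tfrac1{b_0}\big(\diamond_{j=1}^k\, t^{b_j-1}\mathbbm{1}_{(0,1)}(t)\,dt\big)$, and $\{1/(b(0)+a(0)n)\}_n$, with representing measure $\tfrac1{a(0)}t^{\,b(0)/a(0)-1}\mathbbm{1}_{(0,1)}(t)\,dt$; both are multiplicative convolutions of absolutely continuous measures on $(0,1]$, hence carry no mass at $0$. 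Therefore the net is minimal.

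\textbf{Part $\mathrm{(ii)}$.} This is the harder direction and the point where I expect the main obstacle. Separate complete monotonicity is useless here, since every coordinate slice of $1/p$ is the reciprocal of a product of linear factors with negative roots and so is automatically completely monotone; all three inequalities must therefore be squeezed out of genuinely \emph{joint} information. The plan: the hypothesis gives a representing measure $\mu$ on $[0,1]^2$ with $\int s^m t^n\,d\mu=\frac1{b(m)+a(m)n}$, whose one-variable marginals are forced to be the representing measures of $\{1/b(m)\}_m$ and $\{1/(b(0)+a(0)n)\}_n$ computed above. The three comparisons \eqref{necessary-c-2}, \eqref{necessary-c-3}, \eqref{necessary-c-1} are finite expressions in $a(0),\dots,a(k)$ and $b(0),\dots,b(k)$ — one has $\sum 1/a_j=a'(0)/a(0)$, $\prod a_j=a(0)/a_0$, and $\sum a_j$ is the subleading coefficient of $a$, and likewise for $b$ — so each should follow from positivity of $\mu$ together with a finite part of the joint moment data. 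Concretely I would match asymptotic expansions: from $\frac1{p(m,n)}=\frac1{m^k(b_0+a_0n)}\big(1-\tfrac{b_0\sum b_j+a_0n\sum a_j}{m(b_0+a_0n)}+\cdots\big)$ as $m\to\infty$ one reads off the behaviour of the $s$-marginal near $s=1$ at leading order and, at subleading order, a moment functional whose nonnegativity (an instance of $\mu\ge0$) should give $\sum b_j\le\sum a_j$; expanding instead in $n$ and examining $\mu$ near $t=1$ should yield the comparisons of $c(0)=b(0)/a(0)$ with $c(\infty)=b_0/a_0$ (geometric mean) and of $c'(0)$ with $0$ (harmonic mean). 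An alternative is to restrict the net to the lines $\{(m,\ell m)\}_m$ (or the diagonal): such restrictions are pushforwards of $\mu$ under $(s,t)\mapsto s t^\ell$, hence remain completely monotone, which forces the polynomials $x\mapsto b(x)+\ell x\,a(x)$ to be real-rooted and, as $\ell$ varies, should pin down the three inequalities via a \cite[Proposition~4.3]{AC2017}-type argument. In either route the real work — and the main obstacle — is the bookkeeping: isolating, among the infinitely many joint-difference inequalities, the exact combination and the exact limit that collapses to each mean comparison without losing information to the slack coming from the monotonicity of $a$ and $b$.
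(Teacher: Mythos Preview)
Your Part~(i) is essentially the paper's argument: the integral representation $\frac{1}{b(m)+a(m)n}=\int_0^1 t^n\,\frac{t^{c(m)-1}}{a(m)}\,dt$, the partial-fraction expansion of $c=b/a$ with residues of constant sign under interlacing, and the observation that each factor $\exp(\sigma_j(t)/(m+a_j))$ is a Hausdorff moment sequence in $m$, are exactly the content of Lemma~\ref{continuum} and the two proofs of Theorem~\ref{main-thm}(a). Your minimality check (directly on the two axis-sections via Widder's criterion) is a mild simplification of the paper's route, which instead verifies that the \emph{joint} representing measure has no mass on the coordinate axes; both work.

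Part~(ii), however, has a genuine gap. You have correctly isolated the target: all three inequalities follow once $c(x)=b(x)/a(x)$ is known to be nondecreasing on $\mathbb R_+$ (harmonic mean from $c'(0)\ge0$; geometric and arithmetic means from $c(x)\le c(\infty)=b_0/a_0$, evaluated at $x=0$ and compared at subleading order as $x\to\infty$). What you are missing is a mechanism that actually \emph{proves} $c'\ge0$ from joint complete monotonicity of the \emph{net}. Your asymptotic/Tauberian sketch does not pin down which positivity statement of $\mu$ delivers the subleading sign, and your line-restriction route, to force real-rootedness of $b(x)+\ell x\,a(x)$ via \cite[Proposition~4.3]{AC2017}, already presupposes complete monotonicity of the \emph{function} $x\mapsto 1/(b(x)+\ell x\,a(x))$, not merely of the sequence---so it needs the very net-to-function passage you are trying to bypass.

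The paper supplies this step in two moves. First (Corollary~\ref{main-thm-coro}), the explicit form of the representing measure obtained in Proof~II of Theorem~\ref{main-thm}(a)---valid \emph{without} assuming interlacing---shows the net is automatically minimal; by \cite[Proposition~6]{Athavale2001} this upgrades joint complete monotonicity of the net $\{1/p(m,n)\}$ to that of the \emph{function} $1/p$ on $\mathbb R_+^2$. Second (Lemma~\ref{lem-necessary}), a direct computation of $(-1)^{n+1}\partial_1\partial_2^{\,n}(1/p)$ at $y=0$, followed by $n\to\infty$, yields $a'(x)b(x)\le a(x)b'(x)$, i.e.\ $c'\ge0$ on all of $\mathbb R_+$. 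The three mean comparisons then drop out by the elementary manipulations you already anticipated.
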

\begin{remark} 
The condition \eqref{sufficient-c-2} is not necessary for $\big\{\frac{1}{p(m, n)}\big\}_{m, n \in \mathbb Z_+}$ to be joint completely monotone.  
For example, for $$p(x, y) = (x+1)(x+4) + (x+2)(x+3)y,$$ 
the net $\big\{\frac{1}{p(m, n)}\big\}_{m, n \in \mathbb Z_+}$ is joint completely monotone. This may be concluded from Theorem~\ref{thm-bi-deg-2-1}(i) (to be seen later).
On the other hand, \eqref{necessary-c-2}, \eqref{necessary-c-3} and \eqref{necessary-c-1} together are not sufficient to guarantee the joint complete monotonicity of $\big\{\frac{1}{p(m, n)}\big\}_{m, n \in \mathbb Z_+}.$ Indeed, if $$p(x, y) = \left(x+1\right)\left(x+2\right)+\left(x+3\right)\left(x+4\right)y,$$ then 
by \cite[Proposition~4.3]{AC2017}, the sequence $\big\{\frac{1}{p(m, 1)}\big\}_{m \in \mathbb Z_+}$ is not completely monotone, and hence the net
$\big\{\frac{1}{p(m, n)}\big\}_{m, n \in \mathbb Z_+}$  is not joint completely monotone (see Remark~\ref{JCM-rmk}). Finally, note that if 
$\big\{\frac{1}{b(m) + a(m)n}\big\}_{m,n \in \mathbb{Z}_+}$ is a joint completely monotone net, then
$b_1 \Le a_k.$
Indeed, if $a_k < b_1,$ then 
\beqn
\frac{1}{b_k} \Le \ldots \Le \frac{1}{b_1}  < \frac{1}{a_k} \Le \ldots \Le \frac{1}{a_1},
\eeqn
which contradicts the inequality in \eqref{necessary-c-2}. 
\hfill $\diamondsuit$
\end{remark}

\subsection*{Plan of the paper}
A large portion of Section~2 occupies proof of Theorem~\ref{coro-main}. The proof begins with a generalization of Theorem~\ref{coro-main}(i) (see Theorem~\ref{main-thm}). We present two proofs of Theorem~\ref{main-thm}(a). The first of which exploits the partial fraction decomposition of rational functions in one variable (see \cite{AC2017, RZ2019}) together with a theorem of 
Berg and Dur\'{a}n from \cite{BD2005}. The second proof is more elementary in approach and provides a method to compute the representing measures in question.
We also present several consequences of Theorem \ref{main-thm}. The first two of which are outcomes of the proof of Theorem \ref{main-thm} (see Corollary~\ref{main-thm-coro} and Corollary~\ref{coro-proof}). 
We also generalize Theorem~\ref{coro-main}(i) providing a couple of families of joint completely monotone rationals functions having nonconstant numerator (see Corollary~\ref{coro-main-thm}). 

In Section~3, we present answers to Question~\ref{Q1} in several subcases of lower bi-degree. The first result of this section describes all polynomials $p : \mathbb R^2_+ \rar (0, \infty)$ of bi-degree at most $(1, 1)$ for which $\frac{1}{p}$ is joint completely monotone and also compute the representing measures in question (see Theorem~\ref{deg-1-1}). In the case of bi-degree $(2, 1),$ we show that an answer to Question~\ref{Q1} boils down to two subcases (see Proposition~\ref{prop-deg-2-1}). We then provide partial answers to Question~\ref{Q1} in these subcases  (see Theorems~\ref{thm-bi-deg-2-1} and \ref{bi-deg-2-1-second}). In particular, we improve Theorem~\ref{coro-main}(i) in the case of $k=2.$

In Section~4, we provide a solution to the Cauchy dual subnormality problem for toral $3$-isometric weighted $2$-shifts which are separate $2$-isometries (see Definition~\ref{def-sep-m-iso} and Theorem~\ref{CDSP-2-iso-wt}). The proof of Theorem~\ref{CDSP-2-iso-wt} relies on Theorem~\ref{deg-1-1} and characterizations of toral $m$-isometries (see Theorem~\ref{thm-char-m-iso}) and of separate $2$-isometries within the class of toral $m$-isometries (see Corollary~\ref{coro-char-m-iso}). The operator tuple torally Cauchy dual to a toral $2$-isometric weighted $2$-shift is always jointly subnormal (see Corollary~\ref{yes-C-dual}). We also exhibit a family of toral $3$-isometric weighted $2$-shift without jointly subnormal toral Cauchy dual (see Example~\ref{not-C-dual}).

We conclude the paper with a brief discussion on the role of the so-called coefficient-matrix in the moment problem addressed in  Question~\ref{Q1}. In particular, we employ the coefficient-matrix to reformulate some of the results of Section~3 (see Theorem~\ref{matrix-i} and \eqref{matrix-s-n}).

\section{Proof of the main theorem and its consequences}

The first step towards the proof of Theorem~\ref{coro-main} is the following fact$:$
\begin{theorem} \label{main-thm}
Let $a, b$ be as given in \eqref{a-and-b}
and let $\{c(m)\}_{m \in \mathbb Z_+}$ be a sequence of positive real numbers.  Assume that 
\begin{enumerate}
\item[$\mathrm{(A1)}$] $b_1 \Le a_1 \Le b_2 \Le a_2 \Le \ldots \Le b_k \Le a_k,$
\item[$\mathrm{(A2)}$] $\big\{\frac{c(m)}{a(m)}\big\}_{m \in \mathbb Z_+}$ is a completely monotone sequence. 
\end{enumerate}
 Then the following statements are valid$:$
 \begin{enumerate}
\item[$\mathrm{(a)}$] the net $\big\{\frac{c(m)}{b(m) + a(m)n}\big\}_{m,n \in \mathbb{Z}_+}$ is joint completely monotone, 
\item[$\mathrm{(b)}$] $\big\{\frac{c(m)}{b(m) + a(m)n}\big\}_{m,n \in \mathbb{Z}_+}$ is a minimal joint completely monotone net provided $\big\{\frac{c(m)}{a(m)}\big\}_{m \in \mathbb Z_+}$ is a minimal completely monotone sequence. 
\end{enumerate}
\end{theorem}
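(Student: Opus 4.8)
The plan is to reduce both (a) and (b) to the one-variable theory by exploiting the structure $p(x,y) = b(x) + a(x)y$, which is affine in $y$. The starting point is the elementary identity
\[
\frac{c(m)}{b(m) + a(m)n} = \frac{c(m)}{a(m)} \cdot \frac{1}{\frac{b(m)}{a(m)} + n},
\]
so the problem splits into understanding the sequence $\{c(m)/a(m)\}_m$ (handled by hypothesis (A2)) and the sequence $\{r(m) + n\}^{-1}$ in the $n$-variable, where $r(m) = b(m)/a(m)$. For fixed $m$, the function $n \mapsto (r(m)+n)^{-1}$ is the moment sequence of a point mass $\delta_{s(m)}$ on $[0,1]$ with $s(m) = 1/(1+r(m)) \in (0,1]$, after the standard substitution that turns $(r+n)^{-1}$ into $\int_0^1 t^n \cdot t^{r-1}\,dt$ only when $r$ is handled via a Beta-type kernel; more robustly, $(r+n)^{-1} = \int_{[0,1]} t^n \, d\nu_r(t)$ where $\nu_r$ is an explicit probability-type measure (for $r>0$ one may take $d\nu_r(t) = r\,t^{r-1}\,dt$, so that $\int_0^1 t^n \cdot r t^{r-1}dt = r/(n+r)$, hence $(n+r)^{-1}$ is $1/r$ times a Hausdorff moment sequence). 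The first proof then combines this with the partial fraction decomposition of $b(x)/a(x)$ and the Berg--Dur\'an multiplicative-convolution machinery: condition (A1) is exactly the interlacing condition guaranteeing that the partial fraction coefficients of $a(x)/b(x)$ (or the relevant ratio) have the correct signs, so that the resulting measure is a genuine positive measure on $[0,1]^2$, built as a multiplicative convolution of the $m$-marginal (coming from (A2)) with the $n$-marginal.

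For the concrete order of steps: first I would record the partial-fraction identity for the one-variable rational function $1/b(x)$, or rather the decomposition needed to write $\{1/b(m)\}_m$ and more generally $\{a(m)^{j}/b(m)\}$ as completely monotone sequences — this is where the interlacing hypothesis $b_1 \le a_1 \le b_2 \le \cdots \le a_k$ enters, via a Cauchy-interlacing / sign-of-residue computation showing all residues of the relevant rational function at $-b_j$ are nonnegative. Second, I would invoke the Berg--Dur\'an theorem (cited from \cite{BD2005}) together with the fact that $n$-th moments multiply under $\diamond$: since $\{c(m)/a(m)\}_m$ is a Hausdorff moment sequence (equivalently completely monotone) by (A2) and, for each fixed configuration, the $n$-dependence $\{1/(r(m)+n)\}_n$ assembles (using the interlacing) into a joint Hausdorff moment net, their appropriate combination/convolution is again a joint Hausdorff moment net on $[0,1]^2$, which by the solution of the two-dimensional Hausdorff moment problem cited in the text is equivalent to joint complete monotonicity — giving (a).

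For part (b), minimality: a joint completely monotone net $\{a_\alpha\}$ fails to be minimal exactly when one can subtract a positive multiple of $\mathbbm 1_{\{0\}}$ from one of its coordinate slices and stay completely monotone, i.e.\ when the representing measure has an atom at a face $\{t_j = 0\}$ that is "removable." I would argue that the representing measure $\mu$ constructed in (a) is a multiplicative convolution $\mu_1 \diamond \mu_2$ (or a mixture of such) where $\mu_1$ is the representing measure of $\{c(m)/a(m)\}_m$ and $\mu_2$ is the representing measure of the $n$-slice; minimality of $\{c(m)/a(m)\}_m$ says $\mu_1$ has no removable atom at $0$, and the $n$-slice measure $\nu_r$ (with density $r t^{r-1}$, no atom at $t=0$ since $r>0$, and mass at $t=1$ controlled) likewise has no removable atom — so the convolution inherits the non-removability on both coordinate axes. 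Concretely I would show that for each $j$, the sequence $\{a_{k\varepsilon_j} - \epsilon \mathbbm 1_{\{0\}}(k)\}_k$ being completely monotone would force $\mu$ restricted to the $t_j$-axis to dominate $\epsilon\,\delta_0$, and trace this back through the convolution structure to a contradiction with minimality of $\{c(m)/a(m)\}_m$ in the $j=1$ direction and with $r(m)>0$ (so $s(m)<1$, no atom needed) in the $j=2$ direction.

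The main obstacle I expect is the sign analysis in the first step: verifying that the interlacing chain \eqref{sufficient-c-2} is precisely what forces all the relevant partial-fraction residues to be nonnegative, so that the candidate measure is positive rather than merely signed. This is a computation with residues of $\prod(x+a_j)/\prod(x+b_j)$ at the poles $x = -b_i$, where one must show $a_0 \prod_j(b_i - a_j)\big/\prod_{j\neq i}(b_i - b_j) \le 0$ pattern-matches to a telescoping sign count given the ordering $b_1 \le a_1 \le b_2 \le \cdots$; getting the alternating signs to cancel correctly across all $i$ is the delicate point, and it is also exactly the place where the second, "more elementary" proof promised in the plan of the paper would instead proceed by a direct induction on $k$ that avoids the residue bookkeeping while simultaneously yielding the representing measure explicitly.
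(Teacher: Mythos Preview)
Your high-level plan coincides with the paper's Proof~I of (a): write
\[
\frac{c(m)}{b(m)+a(m)n}=\frac{c(m)}{a(m)}\cdot\frac{1}{n+r(m)},\qquad r(m)=\frac{b(m)}{a(m)},
\]
use $\frac{1}{n+r}=\int_0^1 t^{n}\,t^{r-1}\,dt$ to reduce to showing that $\bigl\{t^{b(m)/a(m)}\bigr\}_{m\ge 0}$ is Hausdorff for each $t\in(0,1)$ (this is Lemma~\ref{continuum}), then partial-fraction and invoke Berg--Dur\'an. Your outline for (b) also matches the paper: the representing measure is built as a multiplicative convolution, and minimality amounts to checking no mass on the coordinate axes (Lemma~\ref{convolution-mr} together with \cite[Proposition~5]{Athavale2001} and \cite[Theorem~IV.14a]{W1941}).

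There is, however, a concrete confusion in your key step. You first say ``partial fraction decomposition of $b(x)/a(x)$'' but then compute residues of $\prod_j(x+a_j)/\prod_j(x+b_j)$ at $x=-b_i$, i.e.\ of $a/b$, and aim for a sign condition there. The paper decomposes the \emph{other} ratio,
\[
\frac{b(m)}{a(m)}=\frac{b_0}{a_0}+\sum_{j=1}^k\frac{c_j}{m+a_j},
\]
with poles at $-a_j$, and the interlacing (A1) forces every $c_j\le 0$ (see \eqref{formula-c-j}). It is precisely this nonpositivity that feeds into Lemma~\ref{Sev-vari}(ii): one needs $\sum_j c_j s^{a_j}\le 0$ on $(0,1)$, whence $(\log t)\bigl(\tfrac{b(m)}{a(m)}-\tfrac{b_0}{a_0}\bigr)$ is a Hausdorff moment sequence and Berg--Dur\'an gives that its exponential $t^{b(m)/a(m)}$ is Hausdorff. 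Your residue computation for $a/b$ at $-b_i$ (which does yield nonnegative residues under (A1)) does not plug into this mechanism; knowing $a/b$ is completely monotone says nothing direct about $t^{b/a}$. So as written your sign/pole bookkeeping would stall at exactly the point you flag as ``the main obstacle.''

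One further correction: the paper's second proof of (a) is not an induction on $k$. Proof~II expands each factor $t^{c_j/(m+a_j)}$ as a power series in $\log t$, integrates term-by-term via \eqref{EQN2}, and thereby exhibits the representing measure explicitly as the multiplicative convolution of the representing measure of $\{c(m)/a(m)\}$ with measures $\mu_{j,t}$ having explicit densities $w_j$ (see \eqref{mu-j-new}--\eqref{form-w-j}); this explicit form is then what drives the minimality argument in (b).
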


In the proof of Theorem~\ref{main-thm}, 
we need the following identity: For any real number $x >0,$ 
\beq \label{EQN2}
\frac{(-1)^{l-1}}{(l-1)!}\int_{0}^{1}(\log{s})^{l-1}s^{x-1+n}ds=\frac{1}{(n + x)^l}, \quad l \Ge 1, ~n \Ge 0
\eeq
(cf.~\cite[Eqn~(3.2)]{AC2017}). 
For the sake of completeness, we verify this identity by induction on $l \Ge 1$. Fix $x >0.$ For $l=1$, we see that
\begin{align*}
\frac{(-1)^{l-1}}{(l-1)!}\int_0^1 (\log s)^{l-1}s^{x-1+n}\,ds = \int_0^1 s^{x-1+n}\,ds = \frac{1}{n+x}, \quad n \Ge 0,
\end{align*}
and hence, the equation \eqref{EQN2} holds for $l=1$. Now assuming \eqref{EQN2} for $l \Ge 1$, we note that for any nonnegative integer $n,$
\beqn
&&\frac{(-1)^{l}}{l!}\int_0^1 (\log s)^{l}s^{x-1+n}\,ds \\
&=&\frac{(-1)^{l}}{l!} \left( - \lim_{s\to 0} \left( (\log s)^{l} \frac{s^{x+n}}{x+n} \right) - \frac{l}{n+x}\int_0^1 (\log s)^{l-1}s^{x-1+n}\,ds \right) \\ 
& =& \frac{1}{n+x} \left( \frac{(-1)^{l-1}}{(l-1)!} \int_0^1 (\log s)^{l-1}s^{x-1+n}\,ds \right) \\
&=& \frac{1}{(n+x)^{l+1}}.
\eeqn
This completes the verification of \eqref{EQN2}. 
In a proof of Theorem~\ref{main-thm}, we also need a special case of the following general fact.
\begin{lemma}\label{Sev-vari}
For $b_0 >0,$ $m_i \in \mathbb Z_+$ and $b_i \in (0, \infty),$ $i=1, \ldots, k,$ let $p : \mathbb R_+\rightarrow (0,\infty)$ be any polynomial and $q : \mathbb R_+ \rightarrow (0,\infty)$ be a non-constant polynomial given by $q(x) = b_0 \prod_{i=1}^{k}(x+b_i)^{m_{i}}.$ If $\deg q \Ge \deg{p},$ then the following statements are valid$:$
\begin{enumerate}
	\item[$\mathrm{(i)}$] there exist unique $c_0 \in \mathbb R_+$ and $c_{ij} \in \mathbb{R}$ such that $$\frac{p(x)}{q(x)}=c_0+\sum_{i=1}^{k}\sum_{j=1}^{m_i}\frac{c_{ij}}{(x+b_i)^{j}}, \quad x \in \mathbb R_+,$$
	\item[$\mathrm{(ii)}$] if $\displaystyle \sum_{i=1}^{k}\sum_{j=1}^{m_i} 
	\frac{c_{ij}}{(j-1)!}(-\log s)^{j-1}s^{b_i} \Le 0$ for every $s\in (0,1),$ then for every $t \in (0,1)$, 
	$\big\{t^{\frac{p(n)}{q(n)}}\big\}_{n \in \mathbb{Z}_+}$ is a completely monotone sequence.
\end{enumerate}
\end{lemma}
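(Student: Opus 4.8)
For part~(i), I would invoke the real partial fraction expansion. Since $q(x)=b_0\prod_{i=1}^k(x+b_i)^{m_i}$ has only the distinct real negative roots $-b_i$, Euclidean division of $p$ by $q$ gives
\[
\frac{p(x)}{q(x)}=P(x)+\sum_{i=1}^k\sum_{j=1}^{m_i}\frac{c_{ij}}{(x+b_i)^j},
\]
where $P$ is the polynomial quotient; because $\deg q\Ge\deg p$ we have $\deg P\Le 0$, so $P$ is the constant $c_0=\lim_{x\to\infty}p(x)/q(x)$, which is $\Ge 0$ since $p$ and $q$ are positive on $\mathbb R_+$ (indeed, if $\deg p=\deg q$ the leading coefficient of $p$ must be positive). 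Uniqueness of $c_0$ and of the $c_{ij}$ is the usual argument (linear independence over $\mathbb R$ of $1$ and the functions $(x+b_i)^{-j}$, or clearing denominators and matching coefficients).

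For part~(ii), the plan is to turn the sign condition on the $c_{ij}$ into the statement that $x\mapsto p(x)/q(x)$ is a Bernstein function on $(0,\infty)$, and then compose with the exponential. First I would apply \eqref{EQN2} (with $n=0$) termwise to the expansion from~(i); using $(-1)^{j-1}(\log s)^{j-1}=(-\log s)^{j-1}$ this gives, for every real $x>0$,
\[
\frac{p(x)}{q(x)}=c_0+\int_0^1\varphi(s)\,s^{x-1}\,ds,\qquad \varphi(s):=\sum_{i=1}^k\sum_{j=1}^{m_i}\frac{c_{ij}}{(j-1)!}(-\log s)^{j-1}s^{b_i},
\]
where the exchange of the finite sum with the integral is harmless and the integral converges at $s=0$ because each $b_i>0$; the hypothesis is precisely that $\varphi\Le 0$ on $(0,1)$. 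Next I would differentiate under the integral sign (legitimate by domination on compact subsets of $(0,\infty)$) and substitute $s=e^{-u}$, obtaining
\[
(p/q)'(x)=\int_0^1\varphi(s)(\log s)\,s^{x-1}\,ds=\int_0^\infty\bigl(-u\,\varphi(e^{-u})\bigr)e^{-ux}\,du,
\]
which displays $(p/q)'$ as the Laplace transform of the nonnegative function $u\mapsto -u\,\varphi(e^{-u})$; hence $(p/q)'$ is completely monotone on $(0,\infty)$, and since $p/q>0$ on $\mathbb R_+$ the function $p/q$ is a Bernstein function.

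To conclude, fix $t\in(0,1)$ and write $t=e^{-\lambda}$ with $\lambda>0$: then $x\mapsto t^{p(x)/q(x)}=e^{-\lambda\,p(x)/q(x)}$ is the composition of the completely monotone function $e^{-\lambda\,(\cdot)}$ with the Bernstein function $p/q$, hence completely monotone on $(0,\infty)$ (see~\cite{SSV}); restricting to $x=n\in\mathbb Z_+$ produces a completely monotone sequence, since the restriction of a completely monotone function to $\mathbb Z_+$ is a Hausdorff moment sequence. The routine ingredients are part~(i) and the interchange/differentiation bookkeeping; the one step that carries the argument is the passage above, where—after $s=e^{-u}$—the sign condition $\sum_{i,j}\frac{c_{ij}}{(j-1)!}(-\log s)^{j-1}s^{b_i}\Le 0$ is recognized as exactly the assertion that $(p/q)'$ is the Laplace transform of a nonnegative density, i.e.\ that $p/q$ is Bernstein. (Alternatively one can avoid the Bernstein vocabulary and read off directly from the integral representation, via $\triangle^\ell_n s^{n}=s^{n}(s-1)^\ell$, that $\{p(n)/q(n)\}_{n\in\mathbb Z_+}$ is completely alternating, then quote the corresponding composition result for sequences.)
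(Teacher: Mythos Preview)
Your argument is correct. Part~(i) is handled exactly as in the paper (polynomial division plus the standard partial-fraction decomposition). For part~(ii), however, you take a genuinely different route. Both you and the paper begin with the same integral representation
\[
\frac{p(x)}{q(x)}-c_0=\int_0^1\varphi(s)\,s^{x-1}\,ds,\qquad \varphi\Le 0,
\]
obtained from~(i) via~\eqref{EQN2}. From here the paper stays at the \emph{sequence} level: for fixed $t\in(0,1)$ it multiplies by $\log t<0$ to obtain the nonnegative weight $(\log t)\varphi(s)s^{-1}$, concludes that $\{(\log t)(p(n)/q(n)-c_0)\}_{n\Ge 0}$ is itself a Hausdorff moment sequence, and then invokes the Berg--Dur\'an result \cite[Corollary~2.2]{BD2005} that the termwise exponential of a Hausdorff moment sequence is again Hausdorff. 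You instead pass to the \emph{continuous} variable: differentiating under the integral and substituting $s=e^{-u}$ exhibits $(p/q)'$ as the Laplace transform of a nonnegative density, so $p/q$ is a Bernstein function, and the standard composition theorem (completely monotone composed with Bernstein is completely monotone, see \cite{SSV}) yields that $x\mapsto t^{p(x)/q(x)}$ is completely monotone on $(0,\infty)$; restriction to $\mathbb Z_+$ then gives the desired Hausdorff moment sequence. Your alternative remark at the end (recognize $\{p(n)/q(n)\}$ as completely alternating and apply the sequence-level composition result) is essentially a discrete rephrasing of this and lands closest to the paper's argument.

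What each approach buys: your Bernstein-function route is self-contained within the classical completely monotone/Bernstein machinery of \cite{SSV} and makes the structural reason (``$p/q$ is Bernstein'') transparent; it also immediately gives joint complete monotonicity of the \emph{function} $\frac{1}{b(x)+a(x)y}$ without passing through minimality. The paper's route avoids any differentiation or continuous-variable analysis, working entirely with moment sequences, at the cost of importing the specific Berg--Dur\'an lemma. One small point worth making explicit in your write-up: the restriction step at $n=0$ requires the completely monotone function $e^{-\lambda\,p/q}$ to extend continuously to $x=0$, which it does since $p/q$ is a rational function with $q(0)>0$; equivalently, boundedness of $e^{-\lambda\,p/q}$ forces the representing Bernstein measure to be finite.
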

\begin{proof} 
	(i) If $\deg p < \deg q,$ then this fact is precisely \cite[Proposition~2.1]{RZ2019}. If $\deg p=\deg q,$ then by the  polynomial long division (see \cite[pg.~271, Example(2)]{DF2004}), there exists $c_0 \in \mathbb R$ and a polynomial $r$ such that $\deg r <\deg q$ and 
		\beqn
		\frac{p(x)}{q(x)}=c_0+\frac{r(x)}{q(x)}, \quad x \in \mathbb{R}_+.
		\eeqn
	Indeed,	$c_0=\alpha_p/b_0 >0,$ where $\alpha_p$ denotes the leading coefficient of the polynomial $p.$ 
Another application of \cite[Proposition~2.1]{RZ2019} (applied to $r$ and $q$) now yields the desired result. 
	 
	(ii) Assume that
	\beq \label{assumption-ii-fact}
	\sum_{i=1}^{k}\sum_{j=1}^{m_i} \frac{c_{ij}}{(j-1)!}(-\log s)^{j-1}s^{b_i} \Le 0, \quad s \in (0,1).
	\eeq
	For every $n \in \mathbb Z_+,$ by (i), we have
\allowdisplaybreaks
		\beqn 
		\frac{p(n)}{q(n)}  & = & c_0 + \sum_{i=1}^{k}\sum_{j=1}^{m_i}\frac{c_{ij}}{(n+b_i)^{j}} \\
		 & \overset{\eqref{EQN2}}= & c_0 + \sum_{i=1}^{k}\sum_{j=1}^{m_i}\int_{[0, 1]} \frac{c_{ij}}{(j-1)!}(-\log s)^{j-1}s^{b_i-1+n}ds\\
		& = & c_0  + \int_{[0, 1]} s^n \Big(\sum_{i=1}^{k}\sum_{j=1}^{m_i}\frac{c_{ij}}{(j-1)!}(-\log s)^{j-1}s^{b_i-1}\Big)ds.
		\eeqn
		This yields for every $t \in (0,1)$ and every $n \in \mathbb Z_+,$
		\beqn
	&&	(\log t)\Big(\frac{ p(n)}{q(n)}-c_0\Big) \\
	&=& \int_{[0, 1]} s^n (\log t)\Big(\sum_{i=1}^{k}\sum_{j=1}^{m_i}\frac{c_{ij}}{(j-1)!}(-\log s)^{j-1}s^{b_i-1}\Big)ds.
		\eeqn
This combined with the assumption \eqref{assumption-ii-fact} shows that for every $t \in (0,1),$ $\big\{(\log t)\big(\frac{ p(n)}{q(n)}-c_0\big)\big\}_{n \in \mathbb{Z}_+}$ is a Hausdorff moment sequence. Hence, by \cite[Corollary~2.2]{BD2005}, for every $t \in (0,1)$, 
$\big\{ e^{(\log t)\big(\frac{ p(n)}{q(n)}-c_0\big)}\big\}_{n \in \mathbb{Z}_+},$ or equivalently, $\big\{t^{\frac{p(n)}{q(n)}}\big\}_{n\in \mathbb{Z}_+}$  is a Hausdorff moment sequence.
\end{proof}

A key step in the proof of Theorem~\ref{main-thm}(a) reduces the two-dimensional moment problem in question to a continuum of one-dimensional moment problems. 

\begin{lemma} \label{continuum}
Let $a, b$ be as given in \eqref{a-and-b}
and let $\{c(m)\}_{m \in \mathbb Z_+}$ be a sequence of positive real numbers. Assume that $(A2)$ holds. 
Then the net $\big\{\frac{c(m)}{b(m) + a(m)n}\big\}_{m,n \in \mathbb{Z}_+}$ is joint completely monotone provided
\beq
\label{claim}		
\text{$\big\{t^{\frac{b(m)}{a(m)}}\big\}_{m \in \mathbb Z_+}$ is a Hausdorff moment sequence for every $t \in (0, 1).$}
\eeq
\end{lemma}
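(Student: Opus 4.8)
The plan is to use the integral representations coming from \eqref{EQN2} together with the multiplicative convolution of measures on $[0,1]$. First I would fix the notation: write $r(x)=\frac{b(x)}{a(x)}$, which is a rational function on $\mathbb{R}_+$, and observe that the net we want to analyze is
\[
\frac{c(m)}{b(m)+a(m)n}=\frac{c(m)}{a(m)}\cdot\frac{1}{r(m)+n}.
\]
The factor $\big\{\frac{c(m)}{a(m)}\big\}_{m\in\mathbb{Z}_+}$ is, by $(A2)$, a completely monotone (Hausdorff moment) sequence, say $\frac{c(m)}{a(m)}=\int_{[0,1]}x^m\,\lambda(dx)$ for a finite positive Borel measure $\lambda$ on $[0,1]$. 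For the second factor I would use \eqref{EQN2} with $l=1$ and $x=r(m)>0$ to write
\[
\frac{1}{r(m)+n}=\int_0^1 s^{\,r(m)-1}s^{\,n}\,ds=\int_{[0,1]}s^{n}\,\nu_m(ds),
\]
where $\nu_m$ is the measure on $[0,1]$ with density $s\mapsto s^{\,r(m)-1}$. The point of hypothesis \eqref{claim} is that $s^{\,r(m)}=t^{\,r(m)}$ evaluated along a Hausdorff-moment family; more precisely, \eqref{claim} says exactly that for each $t\in(0,1)$ the sequence $\big\{t^{\,r(m)}\big\}_{m\in\mathbb{Z}_+}$ is a Hausdorff moment sequence, i.e. there is a finite positive Borel measure $\rho_t$ on $[0,1]$ with $t^{\,r(m)}=\int_{[0,1]}u^m\,\rho_t(du)$ for all $m$.

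The next step is to assemble these pieces into a single two-dimensional moment representation. I would compute, for $m,n\in\mathbb{Z}_+$,
\[
\frac{c(m)}{b(m)+a(m)n}=\frac{c(m)}{a(m)}\int_0^1 s^{\,r(m)}s^{\,n-1}\,ds,
\]
and then substitute the moment representations of $\frac{c(m)}{a(m)}$ and of $s^{\,r(m)}$ (with $t=s$). This requires interchanging the $ds$-integral with the $\lambda$- and $\rho_s$-integrals (justified by positivity and finiteness, Tonelli). The resulting expression should exhibit $\frac{c(m)}{b(m)+a(m)n}$ as $\int_{[0,1]^2}x^m\,y^n\,\mu(dx\,dy)$ for a finite positive Borel measure $\mu$ on $[0,1]^2$ built out of a multiplicative convolution in the first variable (the product $x\cdot u$ coming from $\lambda$ and $\rho_s$, exactly the $\diamond$-operation recalled in Section~1) integrated against $s^{\,n-1}\,ds$ in the second variable. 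By the solution of the two-dimensional Hausdorff moment problem recalled in Section~1, the existence of such a $\mu$ is equivalent to the net being joint completely monotone, which is the conclusion.

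The main obstacle I anticipate is the bookkeeping needed to make the variable $r(m)$ appear consistently as an \emph{exponent} in all three representations simultaneously, so that the products collapse into moments of an honest product measure on $[0,1]^2$ rather than leaving a leftover $m$-dependence outside the integral. In particular one must be careful that the $s^{\,r(m)-1}$ density and the measure $\rho_s$ (which depends on $s$ through $t=s$) combine correctly under the $ds$-integration; treating $u\mapsto u$ as a new first coordinate via multiplicative convolution is what resolves this. A secondary point is verifying that the measure produced is finite: this follows by setting $m=n=0$, which gives total mass $\frac{c(0)}{b(0)}<\infty$. Everything else — measurability of the integrands, applicability of Tonelli, and the identity \eqref{EQN2} — is routine.
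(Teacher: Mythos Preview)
Your proposal is correct and follows essentially the same route as the paper: both write $\frac{c(m)}{b(m)+a(m)n}=\int_0^1 t^{n}\,\frac{c(m)}{a(m)}\,t^{\,b(m)/a(m)-1}\,dt$ and then argue that, for each fixed $t\in(0,1)$, the integrand (viewed as a sequence in $m$) is a Hausdorff moment sequence. The only difference is presentational: the paper dispatches the last step in one line by citing that the product of two completely monotone sequences is completely monotone \cite[Lemma~8.2.1(v)]{BCR1984}, whereas you unpack that same fact explicitly via the multiplicative convolution $\lambda\diamond\rho_t$.
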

\begin{proof} Assume that \eqref{claim} holds. 
Let $A=a/c$ and $B = b/c.$ 
Note that 
		\beq \label{EQN4}
	\frac{c(m)}{b(m) + a(m)n} &=&	\frac{1}{B(m)+A(m)n} \notag \\
	&=& \int_{[0, 1]} t^n\,\frac{t^{\frac{B(m)}{A(m)}-1}}{A(m)}dt,  
	\quad m,n \in \mathbb{Z}_+.
		\eeq
	Since $\frac{B}{A}=\frac{b}{a},$	it suffices to check that for every $t \in (0, 1),$ $\Big\{\frac{t^{\frac{b(m)}{a(m)}-1}}{A(m)}\Big\}_{m \in \mathbb Z_+}$ is a Hausdorff moment sequence. In turn, in view of the assumption (A2) (which ensures that $\big\{\frac{1}{A(m)}\big\}_{m \in \mathbb Z_+}$ is a completely monotone sequence)  and the fact that the product of two completely monotone sequences is completely monotone (see  \cite[Lemma~8.2.1(v)]{BCR1984}), 
this follows from \eqref{claim}.
\end{proof}

\begin{proof}[Proof I of Theorem~\ref{main-thm}$(a)$]  
In view of Lemma~\ref{continuum}, it is sufficient to check that \eqref{claim} holds. 
In case $b_j=b_{j+1}$ for some $j = 1, \ldots, k-1,$ then by the assumption (A1), $a_j=b_j.$ In this case, after cancelling the factors $x+a_j$ and $x+b_j$ from $\frac{B}{A}=\frac{b}{a},$ we may assume that all $b_j$'s are distinct. Repeating the same argument, we may assume that $a_j$'s are also distinct. 
By Lemma~\ref{Sev-vari}(i) (applied to $p=b,$ $q=a$ and $m_i=1$ for all $i$), there exist $c_0, c_{1}, \ldots, c_k \in \mathbb{R}$ such that 
\beq \label{p-frac}	
	\frac{B(m)}{A(m)}=	\frac{b(m)}{a(m)} =  c_0 + \sum_{j=1}^{k} \frac{c_{j}}{m+a_j}, \quad m \in \mathbb Z_+.
\eeq
Clearly, $c_0=\frac{b_0}{a_0}.$ Also, by  \cite[Proposition~2.1]{RZ2019} (applied with all $b_i=1$),
\beq 
\label{formula-c-j-Ramiz}
c_j=\frac{b(-a_j)}{a_0 \prod_{1 \Le l \neq j \Le k}(a_l-a_j)}, \quad j =1, \ldots, k.
\eeq
Since $b(x)=b_0 \prod_{i=1}^{k}(x+b_i),$  
\beq \label{formula-c-j}
c_j &=&  \frac{b_0}{a_0} \frac{\prod_{l=1}^k (b_l-a_j)}{\prod_{1 \Le l \neq j \Le k}(a_l-a_j)} \\ \notag
&=& - \frac{b_0}{a_0} \frac{\prod_{1 \Le l \Le j} (a_j-b_l)}{\prod_{1 \Le l \Le j-1}(a_j-a_l)}\frac{\prod_{j+1 \Le l \Le k} (b_l-a_j)}{\prod_{j+1 \Le l \Le k}(a_l-a_j)},
\quad j=1, \ldots, k,
\eeq		
which is a non-positive real number in view of the assumption (A1). 
Thus $\sum_{j=1}^{k} c_{j} s^{a_j} \Le 0$ for every $s\in (0,1).$ Hence, we get \eqref{claim} from Lemma~\ref{Sev-vari}(ii) (applied to $p=b$ and $q=a$), completing the Proof I of Theorem~\ref{main-thm}$(a).$ 
\end{proof}
Although Lemma~\ref{Sev-vari} provides an elegant proof of Theorem~\ref{main-thm}(a), 
a close examination of Proof I shows that an application of Lemma~\ref{Sev-vari}(ii) could be replaced by an argument based on multiplicative convolution of measures
(cf. \cite[Proof of Lemma~2.1]{BD2005}). Moreover, the following alternate proof of Theorem~\ref{main-thm}(a) provides an algorithm to compute the representing measure in question.
\begin{proof}[Proof II of Theorem~\ref{main-thm}$(a)$] 
Note that by \eqref{p-frac}, 
		\beq \label{s-power-prod}
		t^{\frac{B(m)}{A(m)}-1}= t^{\frac{b_0}{a_0}-1}\, t^{ \sum_{j=1}^{k}\frac{c_j}{m+a_j}} = 
	t^{\frac{b_0}{a_0}-1}\,	\prod_{j=1}^{k}t^{\frac{c_j}{m+a_j}}, \quad t >0, ~m \in \mathbb Z_+.
		\eeq
 For any $j=1, \ldots, k,$ $m \in \mathbb Z_+$ and $t>0,$ consider
		\beq \label{bidegree-second-case}
	t^{\frac{c_j}{m+a_j}} &=& \sum_{l=0}^{\infty}\frac{(c_j \log t)^{l}}{{l!} (m+a_j)^{l}} \\
	&\overset{\eqref{EQN2}}=&
	\int_{[0, 1]} s^m \delta_1(ds)+ \sum_{l=1}^{\infty} \frac{ (c_j \log t)^l }{(l-1)!l!}  \int_{[0, 1]} (-\log {s})^{l-1}s^{a_j-1+m}ds. \notag
\eeq
By the dominated convergence theorem, we obtain
\beq \label{each-t-int}
		t^{\frac{1}{k}\big(\frac{b_0}{a_0}-1\big)} t^{\frac{c_j}{m+a_j}} 
=	\int_{[0, 1]} s^m \mu_{j, t}(ds), \quad m \in \mathbb Z_+, ~j=1. \ldots, k,
		\eeq
		where the measure $\mu_{j, t}$ is of the form
		\beq \label{mu-j-new}
		\mu_{j, t}(ds) = t^{\frac{1}{k}\big(\frac{b_0}{a_0}-1\big)} \delta_1(ds) + w_j(s, t)ds 
		\eeq
		with the weight function $w_j$ given by 
\beq
	\label{form-w-j} 
		w_j(s,t)  
	=  t^{\frac{1}{k}\big(\frac{b_0}{a_0}-1\big)} s^{a_j-1} \sum_{l=1}^{\infty}\frac{(c_j \log t)^{l}{(- \log s)}^{l-1}}{(l-1)!l!}, ~  s, t \in (0, 1). 
\eeq
Clearly, $w_j$ integrable with respect to the Lebesgue measure on $[0, 1]$ for every $j=1, \ldots, k.$ 
By the assumption (A2), there exists a positive finite Borel measure $\mu$ on $[0, 1]$ such that
\beqn 
\frac{c(m)}{a(m)} = \int_{[0, 1]} s^m \mu(ds), \quad m \in \mathbb Z_+. 
\eeqn 	
		Combining this with \eqref{s-power-prod} and \eqref{each-t-int},  we obtain  
		\beq \label{weighted-L}
		\frac{t^{\frac{B(m)}{A(m)}-1}}{A(m)} &=&  \frac{c(m)}{a(m)}
	\prod_{j=1}^{k}	t^{\frac{1}{k}\big(\frac{b_0}{a_0}-1\big)} t^{\frac{c_j}{m+a_j}}  \notag\\
&=& \int_{[0, 1]} s^m \mu(ds) \prod_{j=1}^{k} 
\int_{[0, 1]} s^m \mu_{j, t}(d(s,t)) \\ \notag 
&=& \int_{[0, 1]} s^m \nu_t(ds), \quad t \in (0, 1), 
		\eeq
		where $\nu_t$ is the multiplicative convolution of $\mu$ and $\mu_{j, t},$ $j=1, \ldots, k.$ 
This combined with \eqref{EQN4} yields
		\beq
		\label{rep-main-seq}
		\frac{c(m)}{b(m) + a(m)n}=\int_{[0, 1]} \int_{[0, 1]} s^mt^n \nu_t(ds) dt.
		\eeq
		
By \eqref{formula-c-j} and the assumption (A1),	$c_1, \ldots, c_k \Le 0.$  It now follows from \eqref{form-w-j} that $w_j$ is nonnegative on $(0, 1)$ for every $j=1, \ldots, k.$ Since the multiplicative convolution of positive measures is positive, this combined with \eqref{mu-j-new} and \eqref{rep-main-seq} completes Proof II of (a).
	\end{proof}
	\begin{remark} \label{assu-A1-used}
	The assumption (A1) is used only in the last paragraph of Proof II of Theorem~\ref{main-thm}$(a)$ to conclude that $c_1 , \ldots, c_k \Le 0.$ \hfill $\diamondsuit$
	\end{remark}

To prove Theorem~\ref{main-thm}$(b)$, we need an elementary fact pertaining to the convolution of finite signed measures. 
\begin{lemma} \label{convolution-mr}
 Let $\mu, \nu$ be finite signed Borel measures on $[0,1]$ and let $\mu \diamond \nu$ be the multiplicative convolution of $\mu$ and $\nu.$ 
 If $\mu(\{0\})=0$ and $\nu(\{0\})=0,$ then 
 $\mu \diamond \nu(\{0\})=0.$
 \end{lemma}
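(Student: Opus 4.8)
The plan is to evaluate $\mu\diamond\nu(\{0\})$ explicitly from the defining property of the multiplicative convolution recalled in Section~1. Taking the Borel set $\Omega=\{0\}$ there gives
\[
\mu\diamond\nu(\{0\})=\int_{[0,1]}\int_{[0,1]}\mathbbm{1}_{\{0\}}(xy)\,\mu(dx)\,\nu(dy),
\]
i.e.\ $\mu\diamond\nu$ is the push-forward of the product measure $\mu\otimes\nu$ under $(x,y)\mapsto xy$, so that $\mu\diamond\nu(\{0\})=(\mu\otimes\nu)\big((\{0\}\times[0,1])\cup([0,1]\times\{0\})\big)$. The first step is the elementary observation that the set $\{xy=0\}\subset[0,1]^2$ is the \emph{disjoint} union of $\{0\}\times[0,1]$ and $(0,1]\times\{0\}$; at the level of indicators,
\[
\mathbbm{1}_{\{0\}}(xy)=\mathbbm{1}_{\{0\}}(x)+\mathbbm{1}_{(0,1]}(x)\,\mathbbm{1}_{\{0\}}(y),\qquad x,y\in[0,1].
\]
Since both summands are bounded and $\mu,\nu$ are finite, integrating this identity term by term in the iterated integral above yields the closed form
\[
\mu\diamond\nu(\{0\})=\mu(\{0\})\,\nu([0,1])+\mu((0,1])\,\nu(\{0\}).
\]

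Both implications are then immediate from this formula. For the forward implication, if $\mu(\{0\})=0$ and $\nu(\{0\})=0$, the two summands vanish and $\mu\diamond\nu(\{0\})=0$; note this direction needs no hypothesis on the total masses. For the converse, each summand on the right-hand side is nonnegative — here one uses that $\mu$ and $\nu$ are nonnegative, so in particular $\mu((0,1])\Ge 0$ — whence $\mu\diamond\nu(\{0\})=0$ forces $\mu(\{0\})\,\nu([0,1])=0$ and $\mu((0,1])\,\nu(\{0\})=0$ separately. Dividing the first equation by $\nu([0,1])>0$ gives $\mu(\{0\})=0$; substituting this back, the second becomes $\mu([0,1])\,\nu(\{0\})=0$, and dividing by $\mu([0,1])>0$ gives $\nu(\{0\})=0$.

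I do not expect a genuine obstacle here: the whole content is the identification of $\mu\diamond\nu$ with a push-forward together with the elementary decomposition of the integrand. The only point that deserves a line of care is the overlap of the two ``coordinate axes'' $\{x=0\}$ and $\{y=0\}$ inside $[0,1]^2$: treating $\mathbbm{1}_{\{0\}}(xy)$ via the \emph{disjoint} pieces $\{x=0\}$ and $\{x\ne0,\ y=0\}$ (rather than by inclusion–exclusion on $\{x=0\}\cup\{y=0\}$, which introduces a spurious cross term) keeps each summand of the closed form manifestly nonnegative, which is precisely what the converse implication exploits.
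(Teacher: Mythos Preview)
Your derivation of the closed form is cleaner than the paper's: by splitting $\{xy=0\}$ into the \emph{disjoint} pieces $\{0\}\times[0,1]$ and $(0,1]\times\{0\}$ you get
\[
\mu\diamond\nu(\{0\})=\mu(\{0\})\,\nu([0,1])+\mu((0,1])\,\nu(\{0\}),
\]
whereas the paper splits over the non-disjoint sets $[0,1]\times\{0\}$ and $\{0\}\times[0,1]$ and writes $\mu([0,1])\nu(\{0\})+\mu(\{0\})\nu([0,1])$, silently dropping the inclusion--exclusion cross term $\mu(\{0\})\nu(\{0\})$. So in this respect your computation is the more careful one, and the forward implication is fine.

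The genuine gap is in the converse. The lemma is stated for \emph{signed} measures, yet you explicitly invoke ``$\mu$ and $\nu$ are nonnegative'' to conclude that the two summands are individually nonnegative. That hypothesis is not available, and without it the argument fails --- in fact the converse implication is simply false as stated. Take for instance $\mu=\delta_0+\lambda$ and $\nu=-\delta_0+2\lambda$, where $\lambda$ is Lebesgue measure on $[0,1]$: then $\mu([0,1])=2>0$, $\nu([0,1])=1>0$, and your formula gives $\mu\diamond\nu(\{0\})=1\cdot 1+1\cdot(-1)=0$, yet $\mu(\{0\})=1\neq 0$. The paper's proof has the same defect (it simply asserts the equivalence from the formula and the positivity of the total masses), but every application of the lemma in the paper is to positive measures, so the intended statement is evidently for $\mu,\nu\Ge 0$. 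Your argument is correct under that reading; as written, neither proof establishes the converse for signed measures, because it is not true.
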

 \begin{proof}
Note that 
 \beqn
 \mu \diamond \nu(\{0\}) &=& \int_{[0,1]}\mathbbm{1}_{\{0\}}(x) \mu \diamond \nu(dx)\\
  &=& \int_{[0,1]\times [0,1]}\mathbbm{1}_{\{0\}}(xy) \mu(dx) \nu(dy)\\
  &=& \int_{([0, 1] \times \{0\}) \cup (\{0\} \times [0, 1])}\mathbbm{1}_{\{0\}}(xy) \mu(dx) \nu(dy)\\
   &=& \int_{[0, 1] \times \{0\}} \mu(dx) \nu(dy)+ \int_{\{0\} \times (0, 1]}\mu(dx) \nu(dy)\\
    &=&  \mu([0,1])\nu(\{0\})+ \mu(\{0\})\nu((0,1]).
 \eeqn
 This yields the desired conclusion. 
 \end{proof}

\begin{proof}[Proof of Theorem~\ref{main-thm}$(b)$]  
Assume that $\big\{\frac{c(m)}{a(m)}\big\}_{m \in \mathbb{Z}_+}$ is a minimal completely monotone sequence with the representing measure $\mu.$
In view of \cite[Proposition~5]{Athavale2001}, it suffices to check that 
\beqn
\eta([0, 1] \times \{0\})=0,~ \eta(\{0\} \times [0, 1])=0,
\eeqn
where $\eta$ is the representing measure of $\big\{\frac{c(m)}{b(m) + a(m)n}\big\}_{m,n \in \mathbb{Z}_+}.$  
Let $\nu_t$ be the multiplicative convolution of $\mu$ and $\mu_{j, t},$ $j=1, \ldots, k$ (see \eqref{mu-j-new}). 
It follows from \eqref{rep-main-seq} that 
 \beqn
 \eta([0, 1] \times \{0\}) &=& \int_{[0, 1] \times [0, 1]} \mathbbm{1}_{[0, 1] \times \{0\}}(s, t)\nu_t(ds)dt \\ &=& \int_{[0, 1]} \mathbbm{1}_{\{0\}}(t) \Big(\int_{[0, 1]} \nu_t(ds)\Big) dt,
 \eeqn
 which is clearly $0.$   
Since $\big\{\frac{c(m)}{a(m)}\big\}_{m \in \mathbb{Z}_+}$ is a minimal completely monotone sequence,  
by \cite[Theorem~IV.14a]{W1941}, $\mu(\{0\})=0.$ 
Further, since $\mu_{j, t}(\{0\})=0,$ $j=1, \ldots, k$ (see \eqref{mu-j-new}), repeated applications of Lemma~\ref{convolution-mr} ($k-1$ times) show that $\nu_t(\{0\})=0$ for any $t \in (0, 1).$
It now follows that  
 \beqn
 \eta(\{0\} \times [0, 1]) &=& \int_{[0, 1] \times [0, 1]} \mathbbm{1}_{\{0\} \times [0, 1]}(s, t)\nu_t(ds)dt \\ &=& \int_{[0, 1]} \Big(\int_{[0, 1]} \mathbbm{1}_{\{0\}}(s)\nu_t(ds)\Big) dt \\
&=& \int_{[0, 1]} \nu_t(\{0\})dt,
 \eeqn
which shows that $\eta(\{0\} \times [0, 1]) =0.$ 
 	\end{proof}

Before we present a variant of Theorem~\ref{coro-main}(ii) (in the case of constant sequence $c$ with value $1$), 
we record here the following consequence of \cite[Theorem~3.1]{AC2017} and \cite[Theorem~IV.14a]{W1941}$:$ 
 \begin{align} \label{uno-5}
   \begin{minipage}{67ex}
\text{\em For a polynomial $p : \mathbb R_+ \rar (0, \infty)$ with all real roots, $\big\{\frac{1}{p(m)}\big\}_{m \in \mathbb Z_+}$} \\ {\em   is a completely monotone sequence with the representing measure} \\ {\em being a weighted Lebesgue measure. In particular, $\big\{\frac{1}{p(m)}\big\}_{m \in \mathbb Z_+}$} \\ \text{\em  is a minimal completely monotone sequence.} 
 \end{minipage}
 \end{align}
 	
\begin{corollary} \label{main-thm-coro}
Let $a, b$ be as given in \eqref{a-and-b}.
If $\big\{\frac{1}{b(m) + a(m)n}\big\}_{m,n \in \mathbb{Z}_+}$ is a joint completely monotone net, then it is a minimal joint completely monotone net.
\end{corollary}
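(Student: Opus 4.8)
The plan is to derive Corollary~\ref{main-thm-coro} from the minimality assertion of Theorem~\ref{main-thm}(b) by supplying the hypotheses (A1) and (A2) for free. The key point is that, unlike in Theorem~\ref{coro-main}(i), here we are \emph{not} assuming the ordering~\eqref{sufficient-c-2}: instead, we have as a hypothesis that $\big\{\frac{1}{b(m)+a(m)n}\big\}_{m,n\in\mathbb Z_+}$ is already joint completely monotone, and we must conclude minimality. The first step is to observe that, by Remark~\ref{JCM-rmk} applied with the second variable frozen at $n=0$, the joint complete monotonicity of $\big\{\frac{1}{b(m)+a(m)n}\big\}_{m,n}$ forces $\big\{\frac{1}{b(m)}\big\}_{m\in\mathbb Z_+}$ to be a completely monotone sequence. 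Since $b$ is a polynomial $\mathbb R_+\to(0,\infty)$ with all real roots (namely $-b_1,\ldots,-b_k$, and the positive leading coefficient $b_0$), the sequence $\big\{\frac{1}{b(m)}\big\}_{m\in\mathbb Z_+}$ is automatically completely monotone by~\eqref{uno-5}; more importantly,~\eqref{uno-5} tells us it is in fact a \emph{minimal} completely monotone sequence.

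The second step is to reduce to the setting of Theorem~\ref{main-thm} by choosing the auxiliary sequence $c$ appropriately. The natural choice is to play the roles of $a$ and $b$ symmetrically: since $a$ also has all real roots $-a_1,\ldots,-a_k$ with positive leading coefficient $a_0$, the sequence $\big\{\frac{1}{a(m)}\big\}_{m\in\mathbb Z_+}$ is, again by~\eqref{uno-5}, a minimal completely monotone sequence. I would like to apply Theorem~\ref{main-thm}(b) with $c\equiv 1$, which requires verifying (A2), i.e.\ that $\big\{\frac{1}{a(m)}\big\}_{m\in\mathbb Z_+}$ is completely monotone --- this holds by~\eqref{uno-5} --- together with (A1), the ordering $b_1\le a_1\le\cdots\le b_k\le a_k$. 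But (A1) is \emph{not} available in general under the hypothesis of the corollary (indeed the Remark after Theorem~\ref{coro-main} exhibits a joint completely monotone net with the $a_j$'s and $b_j$'s interlaced the ``wrong'' way). So Theorem~\ref{main-thm}(b) cannot be invoked directly; this is the main obstacle.

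To get around it, I would bypass Theorem~\ref{main-thm}(b) and argue directly with the minimality criterion from \cite[Proposition~5]{Athavale2001} that was used in its proof: a joint completely monotone net is minimal if and only if its representing measure $\eta$ on $[0,1]^2$ satisfies $\eta([0,1]\times\{0\})=0$ and $\eta(\{0\}\times[0,1])=0$. So let $\eta$ be the representing measure of $\big\{\frac{1}{b(m)+a(m)n}\big\}_{m,n}$. For the first condition, write $\frac{1}{b(m)}=\int_{[0,1]}s^m\,\rho(ds)$ where $\rho$ is the (absolutely continuous, by~\eqref{uno-5}) representing measure of $\big\{\frac{1}{b(m)}\big\}_m$; then, since $\frac{1}{b(m)}=\sum_{k\ge 0}(-1)^k\big(\triangle_2^k\,\frac{1}{b(m)+a(m)\cdot}\big)\big|_{n=0}$ type considerations — more cleanly, evaluating the moment identity for $\eta$ at $n=0$ gives $\int_{[0,1]}s^m\,\eta(ds,\{0\}\cup\cdots)$... concretely, the marginal of $\eta$ on the $s$-axis obtained by integrating out $t$ is the representing measure of the $n=0$ section $\big\{\frac1{b(m)}\big\}_m$, which has no atom at $0$ by the minimality statement in~\eqref{uno-5} and \cite[Theorem~IV.14a]{W1941}; hence $\eta(\{0\}\times[0,1])=0$. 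For the second condition $\eta([0,1]\times\{0\})=0$, note that by joint complete monotonicity and Remark~\ref{JCM-rmk} each section $n\mapsto \frac{1}{b(m)+a(m)n}$ (with $m$ fixed) is completely monotone; being the reciprocal of the degree-one polynomial $n\mapsto b(m)+a(m)n$ with $a(m)>0$, its representing measure is $\frac{t^{b(m)/a(m)-1}}{a(m)}\,\mathbbm1_{(0,1)}(t)\,dt$, which is absolutely continuous and hence has no atom at $t=0$; integrating this over $m$ against the disintegration of $\eta$ in the first variable shows $\eta([0,1]\times\{0\})=0$. I expect the technical heart of the write-up to be making the disintegration/marginal arguments for $\eta$ rigorous --- i.e.\ justifying that "freeze one variable, the resulting one-dimensional representing measure is the corresponding marginal section of $\eta$, and that marginal has no atom at $0$" --- using uniqueness of representing measures (the $n$-dimensional Weierstrass plus Riesz argument cited in the excerpt) to identify the pieces, together with Lemma~\ref{convolution-mr} if a multiplicative-convolution description of $\eta$ is used instead of an abstract disintegration.
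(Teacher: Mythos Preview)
Your approach is correct in spirit and genuinely different from the paper's. The paper does not argue via marginals at all: it observes (see Remark~\ref{assu-A1-used}) that the explicit signed-measure representation \eqref{rep-main-seq}, derived in Proof~II of Theorem~\ref{main-thm}(a), holds \emph{without} assuming~(A1), and that the proof of Theorem~\ref{main-thm}(b) uses only \eqref{mu-j-new}, \eqref{weighted-L}, \eqref{rep-main-seq} and Lemma~\ref{convolution-mr}. It then checks the two inputs needed for that argument to go through with $c\equiv 1$: that each $\mu_{j,t}([0,1])>0$ (from \eqref{each-t-int}), so Lemma~\ref{convolution-mr} applies to the signed convolution factors, and that $\{1/a(m)\}_m$ is minimal (from~\eqref{uno-5}). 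In other words, the paper reuses the concrete convolution structure of the representing measure, whereas you work abstractly with the marginals of~$\eta$ and uniqueness of one-dimensional representing measures. Your route is more elementary and avoids the machinery of Proof~II entirely; the paper's route, on the other hand, is essentially free once Proof~II has been written.

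Your argument for $\eta(\{0\}\times[0,1])=0$ is clean and correct: the first marginal of $\eta$ represents $\{1/b(m)\}_m$, which is minimal by~\eqref{uno-5}, hence has no atom at~$0$ by \cite[Theorem~IV.14a]{W1941}. Your treatment of $\eta([0,1]\times\{0\})=0$, however, is more tangled than it needs to be. The disintegration language and the phrase ``integrating over $m$'' are unnecessary (and the latter does not quite make sense, since $m$ ranges over $\mathbb Z_+$). The direct argument is symmetric to the first one: set $m=0$ instead of $n=0$. The \emph{second} marginal of $\eta$ then represents $\{1/(b(0)+a(0)n)\}_{n\in\mathbb Z_+}$, whose representing measure is the absolutely continuous measure $\frac{1}{a(0)}\,t^{b(0)/a(0)-1}\,dt$ on $(0,1)$ (this is \eqref{EQN2} with $l=1$), which has no atom at $t=0$. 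Hence $\eta([0,1]\times\{0\})=0$. With this simplification the ``technical heart'' you anticipate disappears: no disintegration is needed, and Lemma~\ref{convolution-mr} plays no role in your version.
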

	\begin{proof} 
 	An examination of the proof of Theorem~\ref{main-thm}$(b)$ shows that (under the assumption that $\big\{\frac{c(m)}{b(m) + a(m)n}\big\}_{m,n \in \mathbb{Z}_+}$ is joint completely monotone) it depends only on \eqref{mu-j-new}, \eqref{weighted-L}, \eqref{rep-main-seq} and Lemma~\ref{convolution-mr}. On the other hand, it is recorded in Remark~\ref{assu-A1-used} that the assumption (A1) is not required in the deduction of \eqref{each-t-int}-\eqref{rep-main-seq}. 
 	Thus it suffices to check that 
 	\begin{enumerate}
 	\item[$\mathrm{(i)}$] $\mu_{j, t}([0, 1]) > 0,$ $j=1, \ldots, k,$ $t \in (0, 1),$ (so that Lemma~\ref{convolution-mr} applies),
 	\item[$\mathrm{(ii)}$] (A2) holds with $c=1.$
 	\end{enumerate}
 	The assertion (i) follows from \eqref{each-t-int}, while (ii) is immediate from  \eqref{uno-5}, completing the proof.  
	\end{proof}

Here is another consequence of the proof of Theorem~\ref{main-thm}.
\begin{corollary} \label{coro-proof}
Let $a, b$ be as given in \eqref{a-and-b}.
Let $j=1, \ldots, k$ and $t \in (0, 1).$  
If $(-1)^{j} b(-a_j) < 0,$ then $\Big\{t^{\frac{b(-a_j)}{d_j(m+a_j)}}\Big\}_{m \in \mathbb Z_+}$ is not a Hausdorff moment sequence, where $d_j = a_0 \prod_{1 \Le l \neq j \Le k}(a_l-a_j)$ $($we use here the convention that the product over empty set is $1).$ 
\end{corollary}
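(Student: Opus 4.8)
The plan is to recognize the sequence in the statement as one of the factors occurring in the partial-fraction analysis of Proof~I (and Proof~II) of Theorem~\ref{main-thm}(a), and then to rule it out by a one-line monotonicity argument. First, by the residue formula \eqref{formula-c-j-Ramiz} one has $b(-a_j)/d_j = c_j$, where $c_j$ is the coefficient appearing in the decomposition \eqref{p-frac} of $b/a$; hence $t^{\frac{b(-a_j)}{d_j(m+a_j)}} = t^{\frac{c_j}{m+a_j}}$ for every $m \in \mathbb Z_+$, so the sequence under scrutiny is exactly the factor $\big\{t^{c_j/(m+a_j)}\big\}_{m \in \mathbb Z_+}$ that is expanded in \eqref{bidegree-second-case} (and that enters the product \eqref{s-power-prod}). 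In other words, the corollary asserts that this factor fails to be a Hausdorff moment sequence precisely when the sign condition underlying (A1) breaks down at the index $j$, which is what makes the argument of Proof~I/II unavailable in that case.

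Next I would convert the hypothesis $(-1)^j b(-a_j) < 0$ into the statement $c_j > 0$. Since $a_0 > 0$ and the roots are labeled so that $a_1 \Le a_2 \Le \cdots \Le a_k$ (with $a_j$ necessarily distinct from every other $a_l$, as $d_j \neq 0$), exactly the $j-1$ factors $a_l - a_j$ with $l < j$ are negative while the remaining $k-j$ are positive; hence $\operatorname{sgn}(d_j) = (-1)^{j-1}$. The hypothesis forces $\operatorname{sgn}\big(b(-a_j)\big) = (-1)^{j+1} = (-1)^{j-1}$, and therefore $c_j = b(-a_j)/d_j > 0$.

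The conclusion is then immediate. Since $t \in (0,1)$ we have $\log t < 0$, and since $a_j > 0$ the sequence $\big\{1/(m+a_j)\big\}_{m \in \mathbb Z_+}$ is strictly decreasing and positive; hence $\big\{c_j(\log t)/(m+a_j)\big\}_{m \in \mathbb Z_+}$ is strictly increasing, and so is $\big\{t^{c_j/(m+a_j)}\big\}_{m \in \mathbb Z_+} = \big\{\exp\!\big(c_j(\log t)/(m+a_j)\big)\big\}_{m \in \mathbb Z_+}$. On the other hand, every Hausdorff moment sequence $\big\{\int_{[0,1]} s^m \mu(ds)\big\}_{m \in \mathbb Z_+}$, with $\mu$ a positive finite Borel measure on $[0,1]$, is non-increasing, because $\int_{[0,1]} s^{m+1} \mu(ds) - \int_{[0,1]} s^{m}\mu(ds) = \int_{[0,1]} s^m(s-1)\mu(ds) \Le 0$. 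This contradiction completes the proof.

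I expect no genuine obstacle here; the only point requiring some care is the bookkeeping of $\operatorname{sgn}(d_j)$, equivalently the verification that the hypothesis $(-1)^j b(-a_j) < 0$ is the exact negation of the sign condition that keeps $c_j \Le 0$ in Proof~I/II of Theorem~\ref{main-thm}(a). Once that is in place, the statement follows from the elementary fact that a completely monotone sequence cannot be strictly increasing.
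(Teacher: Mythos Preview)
Your proof is correct, and it takes a genuinely different (and more elementary) route than the paper's. Both arguments share the same first step: identifying $b(-a_j)/d_j$ with the partial-fraction coefficient $c_j$ of \eqref{p-frac}--\eqref{formula-c-j-Ramiz}, and converting the hypothesis $(-1)^j b(-a_j)<0$ into $c_j>0$ via the sign bookkeeping for $d_j$ (exactly as in the paper's display \eqref{cj-positive}, and with the same implicit ordering/distinctness of the $a_l$'s). The paper then appeals to the integral representation \eqref{each-t-int}--\eqref{form-w-j} from Proof~II of Theorem~\ref{main-thm}(a): it evaluates the density $w_j(\cdot,t_0)$ at the explicit point $s_0=e^{1/(c_j\log t_0)}$, obtains a negative value, and invokes uniqueness of the representing (signed) measure to conclude that the sequence cannot be Hausdorff. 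Your argument bypasses all of this by the single observation that, for $c_j>0$ and $t\in(0,1)$, the sequence $\{t^{c_j/(m+a_j)}\}_{m\Ge 0}$ is strictly increasing, whereas any Hausdorff moment sequence is non-increasing. This is shorter and entirely self-contained; the paper's approach, on the other hand, is a direct byproduct of the machinery already developed for Theorem~\ref{main-thm}, which is presumably why it is presented there as a ``consequence of the proof.''
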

\begin{proof}
Assume that  $(-1)^{j} b(-a_j) < 0$ for some $j=1, \ldots, k.$ 
By 
\eqref{formula-c-j-Ramiz}, 
\beq \label{cj-positive}
c_j=-\frac{(-1)^jb(-a_j)}{a_0 \prod_{1 \Le l \Le j-1}(a_j-a_l) \prod_{j+1 \Le l \Le k}(a_l-a_j)} > 0.
\eeq
In view of \eqref{each-t-int}, it suffices to check that for every $t_0 \in (0, 1),$
$w_j(\cdot,t_0) < 0$ on some open subset of $(0, 1).$ 
For $t_0 \in (0, 1),$ letting 
 $s_0=e^{\frac{1}{c_j \log t_0}}$ in \eqref{form-w-j}, 
 we obtain 
\beqn
w_j(s_0,t_0) &=& 
t^{\frac{1}{k}\big(\frac{b_0}{a_0}-1\big)}_0 s^{a_j-1}_0 \sum_{l=1}^{\infty}\frac{(- \log s_0)^{l-1}}{(l-1)!l!} \, (c_j \log t_0)^l
\\ &=& c_j t^{\frac{1}{k}\big(\frac{b_0}{a_0}-1\big)}_0 s^{a_j-1}_0  (\log t_0) \sum_{l=1}^{\infty}\frac{(-1)^{l-1}}{(l-1)!l!},
\eeqn
which is less than $0$ by \eqref{cj-positive}. 
This shows that $w_j(s_0, t_0)<0.$ Since $w_j$ is continuous at $(s_0, t_0),$ the proof is complete. 
\end{proof}

\begin{corollary} \label{coro-main-thm}
Let $a, b$ be as given in \eqref{a-and-b} and assume that $(A1)$ holds. 
Let $F$ and $G$ be two $($possibly empty$)$ subsets of $\{1, \ldots, k\}$ and $\{2, \ldots, k\},$ respectively.  Let $c$ be any one of the following choices:  
\beqn
c(x) = \prod_{j \in F}(x+a_j), \quad c(x)= \prod_{j \in G}(x+b_j), \quad x \in \mathbb R_+.
\eeqn
Then $\big\{\frac{c(m)}{b(m) + a(m)n}\big\}_{m,n \in \mathbb{Z}_+}$ is a minimal joint  completely monotone net.  
\end{corollary}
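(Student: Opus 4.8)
The plan is to deduce Corollary~\ref{coro-main-thm} from Theorem~\ref{main-thm}(b). Since the interlacing assumption (A1) is part of the hypothesis, it suffices, for each of the two choices of $c$, to verify that the one-variable sequence $\big\{\tfrac{c(m)}{a(m)}\big\}_{m\in\mathbb Z_+}$ is a \emph{minimal} completely monotone sequence --- in particular (A2) holds --- whereupon Theorem~\ref{main-thm}(b) yields at once that $\big\{\tfrac{c(m)}{b(m)+a(m)n}\big\}_{m,n\in\mathbb Z_+}$ is a minimal joint completely monotone net. In both cases I would establish minimal complete monotonicity of $\big\{\tfrac{c(m)}{a(m)}\big\}_{m\in\mathbb Z_+}$ by the same device: write it as a finite product of ``elementary'' completely monotone sequences whose representing measures are positive Borel measures on $[0,1]$ with strictly positive total mass and no atom at $0$. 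Granting such a factorization, the product is completely monotone by \cite[Lemma~8.2.1(v)]{BCR1984}; its representing measure equals the multiplicative convolution of the representing measures of the factors, which by repeated application of Lemma~\ref{convolution-mr} has no atom at $0$; hence, by \cite[Theorem~IV.14a]{W1941}, the product sequence is minimal completely monotone.

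For $c(x)=\prod_{j\in F}(x+a_j)$ the factorization is immediate. Cancelling the factors indexed by $F$ in $\tfrac{c(x)}{a(x)}$ gives
\[
\frac{c(m)}{a(m)}=\frac{1}{a_0}\prod_{l\in\{1,\dots,k\}\setminus F}\frac{1}{m+a_l},\qquad m\in\mathbb Z_+ ,
\]
where the constant $\tfrac1{a_0}$ has representing measure $\tfrac1{a_0}\delta_1$, and, by \eqref{EQN2} with $l=1$, each $\big\{\tfrac{1}{m+a_l}\big\}_{m\in\mathbb Z_+}$ has representing measure $s^{a_l-1}\,ds$ on $[0,1]$. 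All these factors are of the required type (for $F=\{1,\dots,k\}$ the product is empty and $\tfrac{c(m)}{a(m)}\equiv\tfrac1{a_0}$, which is still minimal completely monotone).

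For $c(x)=\prod_{j\in G}(x+b_j)$ with $G\subseteq\{2,\dots,k\}$ --- the substantive case --- the key observation, and the one place where the restriction $G\subseteq\{2,\dots,k\}$ is used, is that $G$ omits the index $1$: for each $j\in G$ the root $-a_{j-1}$ of $a$ exists and, by (A1), $a_{j-1}\Le b_j$. Since $j\mapsto j-1$ is injective on $G$, the set $J:=\{\,j-1:j\in G\,\}$ consists of pairwise distinct indices, and splitting the product defining $a$ accordingly yields the purely algebraic identity
\[
\frac{c(m)}{a(m)}=\frac{1}{a_0}\Big(\prod_{j\in G}\frac{m+b_j}{m+a_{j-1}}\Big)\prod_{l\in\{1,\dots,k\}\setminus J}\frac{1}{m+a_l},\qquad m\in\mathbb Z_+ ,
\]
valid regardless of possible coincidences among the $a_j$'s. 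For $j\in G$ one has $\tfrac{m+b_j}{m+a_{j-1}}=1+\tfrac{b_j-a_{j-1}}{m+a_{j-1}}$ with $b_j-a_{j-1}\Ge 0$, so by \eqref{EQN2} the sequence $\big\{\tfrac{m+b_j}{m+a_{j-1}}\big\}_{m\in\mathbb Z_+}$ is completely monotone with representing measure $\delta_1+(b_j-a_{j-1})\,s^{a_{j-1}-1}\,ds$, a positive measure of total mass $b_j/a_{j-1}>0$ with no atom at $0$. Together with $\tfrac1{a_0}$ and the $\big\{\tfrac{1}{m+a_l}\big\}_{m\in\mathbb Z_+}$ as before, every factor in the displayed product is of the required type, so the device of the first paragraph applies.

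The main obstacle is thus entirely concentrated in finding the second displayed factorization: one must pair each $b_j$ with $a_{j-1}$ (using $a_{j-1}\Le b_j$), rather than with $a_j$ --- pairing $b_j$ with $a_j$ would produce the factor $\tfrac{m+b_j}{m+a_j}$, which fails to be completely monotone because $b_j\Le a_j$ forces a negative residue, while the pairing with $a_{j-1}$ is available precisely because $1\notin G$. Once the pairing is chosen correctly, no further difficulty arises: in particular, because the two factorizations above are algebraic identities in $m$ rather than partial-fraction decompositions, the bookkeeping requires no genericity hypothesis on the $a_j$'s.
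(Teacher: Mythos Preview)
Your proof is correct and follows the paper's overall plan: reduce to Theorem~\ref{main-thm}(b) by verifying that $\{c(m)/a(m)\}_{m\in\mathbb Z_+}$ is a minimal completely monotone sequence. The first case is handled the same way in both (the paper invokes \eqref{uno-5}, which is just a packaging of the same computation you carry out explicitly).

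For the second choice $c(x)=\prod_{j\in G}(x+b_j)$, however, your route differs from the paper's. The paper also pairs the numerator factors $m+b_{i_j}$ with the denominator factors $m+a_{i_j-1}$, but then treats the ratio $\prod_j (m+b_{i_j})/\prod_j (m+a_{i_j-1})$ as a whole and invokes Ball's theorem \cite[Theorem~1]{Kball}, which needs the partial-sum majorization $\sum_{j=1}^p b_{i_j}\geq\sum_{j=1}^p a_{i_j-1}$ for all $p$; minimality is then obtained from \cite[Theorem~IV.14b]{W1941} (a completely monotone \emph{function} restricted to $\mathbb Z_+$ gives a minimal sequence). You instead exploit the pointwise inequalities $a_{j-1}\leq b_j$ one factor at a time, writing each $(m+b_j)/(m+a_{j-1})$ explicitly as a Hausdorff moment sequence with representing measure $\delta_1+(b_j-a_{j-1})s^{a_{j-1}-1}\,ds$, and then control the atom at $0$ of the convolution via Lemma~\ref{convolution-mr} together with \cite[Theorem~IV.14a]{W1941}. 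Your argument is more elementary and self-contained (no appeal to Ball's theorem), and it produces the representing measures explicitly; the paper's argument is shorter and would apply verbatim to more general interlacing-type hypotheses where only the partial sums of the $b_{i_j}$ dominate those of the $a_{i_j-1}$.
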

\begin{proof} In view of Theorem~\ref{main-thm}, it suffices to check that $\big\{\frac{c(m)}{a(m)}\big\}_{m \in \mathbb Z_+}$ is a minimal completely monotone sequence. 
If $c(x)= \prod_{j \in F}(x+a_j),$ then \eqref{uno-5} (applied to $p=a/c$)  shows that $\big\{\frac{c(m)}{a(m)}\big\}_{m \in \mathbb Z_+}$ is a minimal completely monotone sequence. 
To see the conclusion in the second case, 
let $c(x)= \prod_{j \in G}(x+b_j).$
Write $G=\{i_1, \ldots, i_l\} $ and $G'=\{i_1-1,i_2-1,\ldots, i_l-1\}$  with $i_1 \Le \ldots \Le i_l.$
Note that 
\beq \label{c-a-expression}
\frac{c(m)}{a(m)} = \frac{1}{a_0}\,\frac{\prod_{j=1}^l(m+b_{i_j})}{\prod_{j =1}^k(m+a_j)} =  \frac{1}{d(m)} \frac{\prod_{j=1}^l(m+b_{i_j})}{\prod_{j \in G'}(m+a_{j})},
\eeq
where $d(m) = a_0 \prod_{\overset{j \notin  G'}{j=1, \ldots, k}}(m+a_{j}),$ $m \in \mathbb Z_+.$
  Consider
\beqn
  \frac{\prod_{j=1}^l(m+b_{i_j})}{\prod_{j \in G'}(m+a_{j})}= \frac{\prod_{j=1}^l(m+b_{i_j})}{\prod_{j=1}^l(m+a_{i_j-1})}, \quad m \in \mathbb Z_+.
\eeqn
By the assumption (A1), we see that  
$$
\sum_{j=1}^{p}b_{i_j}\Ge \sum_{j=1}^p a_{{ i_j}-1 },
\quad p = 1, \ldots, l.
$$ Hence, by \cite[Theorem~1]{Kball},  $ \frac{\prod_{j=1}^l(x+b_{i_j})}{\prod_{j=1}^l(x+a_{i_j-1})}$ is completely monotone function from $\mathbb R_+$ into $(0, \infty).$ Hence $\Big\{\frac{\prod_{j=1}^l(m+b_{i_j})}{\prod_{j \in G'}(m+a_{j})}\Big\}_{m \in \mathbb Z_+}$  is a minimal completely monotone sequence (see \cite[Theorem~IV.14b]{W1941}). 
\end{proof}

 We need the following general fact in the proof of Theorem~\ref{coro-main}(ii)$:$
\begin{lemma} 
\label{lem-necessary}
For polynomials $a, b : \mathbb R_+ \rar (0, \infty),$ 
let $p(x, y)=b(x)+a(x)y,$ $x, y \in \mathbb R_+.$  
If $\frac{1}{p}$ is a joint completely monotone function, then 
\beq \label{necessary-c}
a'(x)b(x) & \Le & a(x)b'(x), \quad x \in \mathbb R_+, \\
\label{necessary-c-4}
a(x_2)b(x_1) & \Le & a(x_1)b(x_2),
\quad x_2 \Ge x_1 \Ge 0. 
\eeq
and $\deg a \Le \deg b.$ 
\end{lemma}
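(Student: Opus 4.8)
The plan is to notice first that the three assertions are really one and the same: everything follows once we know that $b/a$ is nondecreasing on $\mathbb R_+$, that is, once \eqref{necessary-c-4} is established. Indeed, granting \eqref{necessary-c-4} and putting $x_1=x$, $x_2=x+h$ there, subtracting $a(x)b(x)$ from both sides, dividing by $h>0$ and letting $h\rar 0^+$ yields the pointwise inequality \eqref{necessary-c} (the one-sided derivatives exist since $a,b$ are polynomials). Likewise, taking $x_1=0$ in \eqref{necessary-c-4} gives $b(x)/a(x)\Ge b(0)/a(0)>0$ for every $x\Ge 0$; since $b(x)/a(x)\rar 0$ as $x\rar\infty$ whenever $\deg a>\deg b$, this forces $\deg a\Le\deg b$. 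So I would devote the rest of the argument to proving \eqref{necessary-c-4}.

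For that, write $f=1/p$. Since $a$ is independent of $y$, a one-line induction gives $\partial_2^{\,k}f(x,y)=(-1)^k k!\,a(x)^k\,p(x,y)^{-(k+1)}$, so that $(-1)^k\partial_2^{\,k}f(x,y)=k!\,a(x)^k/p(x,y)^{k+1}>0$ on $\mathbb R^2_+$. Differentiating in $x$ and invoking the joint complete monotonicity inequality with $\beta=(1,k)$, namely $(-1)^{k+1}\partial_1\partial_2^{\,k}f\Ge 0$, shows $\partial_1\big(a(x)^k/p(x,y)^{k+1}\big)\Le 0$; that is, for each $k\Ge 1$ and each fixed $y\Ge 0$ the map $x\mapsto a(x)^k/p(x,y)^{k+1}$ is nonincreasing on $\mathbb R_+$ (only this single first-order consequence of joint complete monotonicity is needed). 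The key manoeuvre is then to use all $k$ simultaneously: for $0\Le x_1\Le x_2$ and $y\Ge 0$, positivity of every factor lets one rewrite $a(x_2)^k/p(x_2,y)^{k+1}\Le a(x_1)^k/p(x_1,y)^{k+1}$ as
\[
\Big(\frac{a(x_2)\,p(x_1,y)}{a(x_1)\,p(x_2,y)}\Big)^{k}\;\Le\;\frac{p(x_2,y)}{p(x_1,y)} .
\]
The right-hand side is independent of $k$, so letting $k\rar\infty$ forces the base on the left to be $\Le 1$, i.e., $a(x_2)\,p(x_1,y)\Le a(x_1)\,p(x_2,y)$. Substituting $p=b+ay$ and cancelling the common summand $a(x_1)a(x_2)y$ gives exactly \eqref{necessary-c-4}.

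The part I expect to be the genuine obstacle is recognizing that the naive approach is insufficient: writing out $\partial_1\partial_2 f\Ge 0$ and clearing $p^3>0$ only produces the affine-in-$y$ inequality $2a(x)b'(x)-a'(x)b(x)+a'(x)a(x)y\Ge 0$, whose value at $y=0$ is the strictly weaker bound $2ab'\Ge a'b$. One really must exploit the whole family $\{\partial_2^{\,k}f\}_{k\Ge 1}$, and it is precisely the $k$-th power plus limit trick above that converts that family into the sharp inequality. The remaining points are bookkeeping — that $p>0$ on all of $\mathbb R^2_+$, so the rearrangements are legitimate, and that the reduction step treats $x=0$ via one-sided derivatives — and are routine.
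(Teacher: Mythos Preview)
Your proof is correct and rests on the same core idea as the paper's: exploit the full family of mixed partials $\partial_1\partial_2^{\,k}(1/p)$ and pass to the limit $k\rar\infty$ to eliminate the spurious constant. The executions differ mainly in order and in the limit manoeuvre. The paper first computes $\partial_1\partial_2^{\,n}(1/p)$ explicitly, sets $y=0$ to obtain $(1+1/n)\,a(x)b'(x)\Ge a'(x)b(x)$, lets $n\rar\infty$ to get \eqref{necessary-c}, and then integrates $a'/a\Le b'/b$ over $[x_1,x_2]$ to reach \eqref{necessary-c-4}. You instead use only the qualitative consequence that $x\mapsto a(x)^k/p(x,y)^{k+1}$ is nonincreasing, recast the resulting inequality as a $k$-th power bounded independently of $k$, and let $k\rar\infty$ to force the base $\Le 1$; this delivers \eqref{necessary-c-4} directly, from which you recover \eqref{necessary-c} by a difference quotient. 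Your route avoids the explicit derivative computation and is marginally cleaner; the paper's route makes the role of the parameter $n$ more transparent. Both derive $\deg a\Le\deg b$ identically from \eqref{necessary-c-4} at $x_1=0$.
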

\begin{proof} Assume that $\frac{1}{p}$ is a joint completely monotone function. 
A routine calculation using induction on $n \Ge 1$ shows that 
\beqn
 \partial^n_2\Big(\frac{1}{p}\Big)(x, y)=\frac{(-1)^nn! a(x)^n}{(b(x)+a(x)y)^{n+1}}, \quad n \in \mathbb Z_+, ~x, y \in \mathbb R_+.
\eeqn
Thus, for any positive integer $n,$ 
\beqn 
 & & (-1)^{n+1} \partial_1 \partial^n_2\Big(\frac{1}{p}\Big)(x, y) \\ &=& -n! \partial_1 \Big(\frac{a(x)^n}{(b(x)+a(x)y)^{n+1}}\Big) \notag \\ \notag 
& =& -n!\Big(\frac{na(x)^{n-1}a'(x)}{(b(x)+a(x)y)^{n+1}} -(n+1) \frac{a^{n}(x)(b'(x)+a'(x)y)}{(b(x)+a(x)y)^{n+2}} \Big) \\ \notag 
&=& \frac{n!a(x)^{n-1}}{p(x, y)^{n+2}} \Big((n+1)a(x)(b'(x)+a'(x)y) - na'(x)(b(x)+a(x)y)\Big).
\eeqn
This together with the joint complete monotonicity of $\frac{1}{p}$ yields
 \beqn
&& (-1)^{n+1}\partial_2 \partial^n_1\Big(\frac{1}{p}\Big)(x, y) \Ge 0  \Longrightarrow \\ & & (n+1)a(x)(b'(x)+a'(x)y) \Ge na'(x)(b(x)+a(x)y),  ~ n \Ge 1, ~x, y \in \mathbb R_+.
\eeqn
Letting $y= 0$ and dividing by $n,$ we get $$
(1+1/n)a(x)b'(x) \Ge a'(x)b(x), \quad n \Ge 1, ~x \in \mathbb R_+.$$
We now let $n \rar \infty$ to get \eqref{necessary-c}. 
To see \eqref{necessary-c-4}, note that by \eqref{necessary-c},
\beqn 
\frac{a'(x)}{a(x)} \Le \frac{b'(x)}{b(x)}, \quad x \in \mathbb R_+.
\eeqn
After integrating over $[x_1, x_2]$ and taking exponential on both sides, we get 
\eqref{necessary-c-4}.  
Letting $x_1 = 0$ and $x_2=m$ in \eqref{necessary-c-4}, we obtain 
\beqn
\frac{b(0)}{a(0)} \Le \frac{b(m)}{a(m)}, \quad m \Ge 0. 
\eeqn
This combined with $a(0) > 0$ and $b(0)>0$ yields $\deg a \Le \deg b.$ 
\end{proof}
\begin{remark} Assume that $\frac{1}{p}$ is a joint completely monotone function.
By \eqref{necessary-c}, $\Big(\frac{a}{b}\Big)' \Le 0$ for every $x \Ge 0.$ This combined with \eqref{p-frac} and \eqref{formula-c-j-Ramiz} (with roles of $a$ and $b$ interchanged) shows that
 \beq \label{nece-condition-6}
\sum_{j=1}^{k} \frac{\prod_{l=1}^k (a_l-b_j)}{\prod_{1 \Le l \neq j \Le k}(b_l-b_j)(x+b_j)^2} \Ge 0, \quad x \Ge 0.
 \eeq
 In particular, by multiplying on the left hand side by $x^{2},$ $x > 0,$ and letting $x \rar \infty,$ we obtain 
 $$\sum_{j=1}^{k} \frac{\prod_{l=1}^k (a_l-b_j)}{\prod_{1 \Le l \neq j \Le k}(b_l-b_j)} \Ge 0.$$ Moreover, we have
 $$\sum_{j=1}^{k} \frac{\prod_{l=1}^k (a_l-b_j)}{b^2_j\prod_{1 \Le l \neq j \Le k}(b_l-b_j)} \Ge 0.$$
This may be obtained by letting $x=0$ in \eqref{nece-condition-6}.
\hfill $\diamondsuit$
\end{remark}

The following lemma justifies the fact that a minimal completely monotone function is interpolated by its natural extension to $\mathbb R^2_+.$
\begin{lemma}\label{minimal-implies-JCM} 
    Let $a, b$ be as given in \eqref{a-and-b}. Then there exists a finite Radon measure $\nu$ on $\mathbb R^2_+$ such that
 \beqn
 \frac{1}{b(x)+a(x)y}=\int_{\mathbb R^2_+}e^{-(xt_1+yt_2)}d\nu(t_1,t_2), \quad x, y \in \mathbb R_+.
 \eeqn
 In particular, if the net $\left\{\frac{1}{b(m) + a(m)n}\right\}_{m,n \in \mathbb{Z}_+}$ is minimal joint completely monotone, then the function $\frac{1}{b(x)+a(x)y}$ is joint completely monotone.
\end{lemma}
\begin{proof} Note that a simple calculation as seen in \eqref{EQN4} shows that
		\beq \label{EQN4-2}
	\frac{1}{b(x) + a(x)y}
	&=& \int_{[0, 1]} t^y\,\frac{t^{\frac{b(x)}{a(x)}-1}}{a(x)}dt,  
	\quad x,y \in \mathbb{R}_+.
		\eeq
Since $\frac{a_0}{a(x)}$ is a finite product of completely monotone functions $\frac{1}{(x+a_j)}$, $j =1, \ldots, n$, we note that $\frac{1}{a}$ is joint completely monotone on $\mathbb{R}_+.$ Now by \cite[Theorem~1.1.4]{SSV}, there exists a unique Radon measure $\mu$ on $\mathbb R_+$ such that
\beq \label{1/a-meas}
\frac{1}{a(x)}=\int_{\mathbb R_+} e^{-xt}\mu(dt), \quad x \in \mathbb R_+.
\eeq
We argue as in the proof II of Theorem~\ref{main-thm}$(a)$ to see that, 
		\beq \label{s-power-prod-2}
		t^{\frac{b(x)}{a(x)}-1}=
	t^{\frac{b_0}{a_0}-1}\,	\prod_{j=1}^{k}t^{\frac{c_j}{x+a_j}}, \quad t >0, ~x \in \mathbb R_+,
		\eeq
  where $c_1, \ldots, c_n$ are given by \eqref{formula-c-j} and
\beqn
		t^{\frac{1}{k}\big(\frac{b_0}{a_0}-1\big)} t^{\frac{c_j}{x+a_j}} 
=	\int_{[0, 1]} s^x \mu_{j, t}(ds), \quad x \in \mathbb R_+, ~j=1. \ldots, k
		\eeqn
with $\mu_{j, t}$ given by \eqref{mu-j-new}. For every $t \in \mathbb(0,1),$ we now use the change of variable $s=e^{-u}$ to see
\beq \label{each-t-int-2}
t^{\frac{1}{k}\big(\frac{b_0}{a_0}-1\big)} t^{\frac{c_j}{x+a_j}}=\int_{\mathbb R_+}e^{-ux}\rho_{j,t}(du), \quad ~j=1. \ldots, k,
\eeq
where $\rho_{j,t}(du)=\mu_{j,t}(-e^{-u}du).$ It now follows from \eqref{EQN4-2}-\eqref{each-t-int-2} that
\beqn
		\frac{1}{b(x) + a(x)y}=\int_{[0, 1]} \int_{\mathbb R_+} e^{-ux}t^y \nu_t(du) dt, \quad x,y \in \mathbb{R}_+,
		\eeqn
		where $\nu_t$ is the multiplicative convolution of $\mu$ and $\rho_{j, t},$ $j=1, \ldots, k$. Again using the change of variable $t=e^{-v},$ we obtain
  \beqn
  	\displaystyle \frac{1}{b(x) + a(x)y}=\int_{\mathbb R_+} \int_{\mathbb R_+} e^{-ux}e^{-vy} (-e^{-v})\nu_{e^{-v}}(du) dv, \quad x,y \in \mathbb{R}_+.
  \eeqn
  This completes the proof of the first half.
	
Assume that the net $\left\{\frac{1}{b(m) + a(m)n}\right\}_{m,n \in \mathbb{Z}_+}$ is  minimal joint completely monotone. By \cite[Proposition~6]{Athavale2001}, there exists a joint completely monotone function $f$ on $\mathbb{R}_+^2$ such that
    \beq \label{f=1/p}
    f(m,n)=\frac{1}{b(m)+a(m)n}, \quad m,n \in \mathbb{Z}_+.
    \eeq
        In view of the proof \cite[Proposition~6]{Athavale2001}, there exists a positive Radon measure $\mu$ on $\mathbb R^2_+$ such that
    \beq \label{function -int}
    f(x,y)=\int_{\mathbb{R}_+^2}e^{-(xt_1+yt_2)}d\mu(t_1,t_2), \quad  x,y \in \mathbb{R}_+.
    \eeq
  By the first part, there exists a finite Radon
measure $\nu$ on $\mathbb R^2_+$ such that
   \beq \label{int-form-2}
   \frac{1}{b(x)+a(x)y}=\int_{\mathbb R^2_+}e^{-(xt_1+yt_2})d\nu(t_1,t_2), \quad x, y \in \mathbb R_+.
   \eeq
  It now follows from the uniqueness of the representing measure that $\mu=\nu,$ which completes the proof. 
\end{proof}

\begin{proof}[Proof of Theorem~\ref{coro-main}] (i) This is a special case of  Corollary \ref{coro-main-thm} (the case of $F = \emptyset$).

(ii) 
Assume that $\big\{\frac{1}{b(m) + a(m)n}\big\}_{m,n \in \mathbb{Z}_+}$ is a joint completely monotone net. 
By Corollary~\ref{main-thm-coro}, $\big\{\frac{1}{b(m) + a(m)n}\big\}_{m,n \in \mathbb{Z}_+}$ is a minimal joint completely monotone net. Hence, by Lemma~\ref{minimal-implies-JCM}, $\frac{1}{b(x)+a(x)y}$ is a completely monotone function, and consequently, Lemma~\ref{lem-necessary} is applicable. 
To obtain \eqref{necessary-c-2}, we let $x=0$ in \eqref{necessary-c} and simplify the expression. 
By \eqref{necessary-c-4}, 
\beqn
\frac{\prod_{j=1}^{k}  (x_1+b_j)}{\prod_{j=1}^{k} (x_1+a_j)} \Le \frac{\prod_{j=1}^{k}  (x_2+b_j)}{\prod_{j=1}^{k} (x_2+a_j)}, \quad x_2 \Ge x_1 \Ge 0. 
\eeqn 
Taking $x_2 \rar \infty$, we get
\beq \label{b-less-a}
\prod_{j=1}^{k}  (x_1+b_j) \Le \prod_{j=1}^{k} (x_1+a_j), \quad x_1 \Ge 0.
\eeq
Letting $x_1=0$, we get \eqref{necessary-c-3}.
After cancelling $x_1^k$ from both sides of \eqref{b-less-a}, dividing by $x_1^{k-1}$ and letting $x_1 \rar \infty$, we get \eqref{necessary-c-1}.
\end{proof}

\section{Some cases of lower bi-degree}

In this section, we present solutions to some special instances of Question~\ref{Q1} of lower bi-degree.
Before we state and prove the first result in this direction, recall that for a positive real number $\nu,$ 
the {\it Bessel function $J_{\nu}(z)$ of the first kind of order $\nu$} is given by
\beqn
J_{\nu}(z)=\Big(\frac{z}{2}\Big)^{\nu} \sum_{k=0}^\infty \Big(\frac{-z^{2}}{4}\Big)^k\frac{1}{k!\Gamma(\nu+k+1)}, \quad z \in \mathbb C \setminus (-\infty, 0],
\eeqn
where $\Gamma$ denotes the Gamma function. 
The {\it modified Bessel function $I_{\nu}(z)$ of the first kind of order $\nu$} is given by 
\beq \label{I-nu}
I_{\nu}(z)=\Big(\frac{z}{2}\Big)^{\nu} \sum_{k=0}^\infty \Big(\frac{z^{2}}{4}\Big)^k\frac{1}{k!\Gamma(\nu+k+1)}, \quad z \in \mathbb C \setminus (-\infty, 0],
\eeq
 (see \cite[Eqns~9.1.10 \& 9.6.10]{AS1992}). It is worth noting that the expression of $w_j,$ as given in \eqref{form-w-j}, takes the following form$:$ For any $s, t \in (0, 1),$
 \beqn
		w_j(s,t)  
	=  t^{\frac{1}{k}\big(\frac{b_0}{a_0}-1\big)} s^{a_j-1}  \frac{c_j \log t}{\sqrt{-c_j \log s \log t}} \, I_1\big(2\sqrt{-c_j \log s \log t}\big). 
\eeqn
\begin{theorem}[Degree at most $(1, 1)$] \label{deg-1-1} Let $p : \mathbb R^2_+\rightarrow (0, \infty)$ be a polynomial given by 
\beqn
p(x, y) = a + bx + cy + dxy, \quad x, y \in \mathbb R^2_+,
\eeqn  
where $a, b, c, d \in \mathbb R.$ 
The following statements are equivalent$:$
\begin{enumerate}
\item[$\mathrm{(i)}$] $\frac{1}{p}$ is a joint completely monotone function,
\item[$\mathrm{(ii)}$] $M:=bc-ad  \Ge 0,$
\item[$\mathrm{(iii)}$] $\big\{\frac{1}{p(m,n)}\big\}_{m,n \in \mathbb{Z}_+}$ is a joint completely monotone net,
\item[$\mathrm{(iv)}$] $\big\{\frac{1}{p(m,n)}\big\}_{m,n \in \mathbb{Z}_+}$ is a minimal joint completely monotone net.
\end{enumerate}
If $(ii)$ holds, then 
for every positive integer $l,$ $\big\{\frac{1}{p(m,n)^l}\big\}_{m,n \in \mathbb{Z}_+}$ is a minimal joint completely monotone net with the representing measure 
 given by
\beq \label{w-l-s-t}
\begin{cases}
\frac{s^{\frac{c}{d}-1}t^{\frac{b}{d}-1}}{d(l-1)!}  \left(\frac{\log t \log s}{M}\right)^{\frac{l-1}{2}} I_{l-1}\left(\frac{2}{d}\sqrt{M\log s \log t}\right)\,dsdt, & M > 0, ~d \neq 0, \\
\frac{s^{\frac{c}{d}-1} t^{\frac{b}{d}-1}}{d^l(l-1)!}  \frac{(\log t\log s)^{l-1}}{(l-1)!} \,dsdt, & M = 0, ~d \neq 0, \\
\frac{(-1)^{l-1}}{b^l (l-1)!} \log(s)^{l-1}s^{\frac{a}{b}-1} d\delta_{s^{c/b}}(t)\,ds, & b \neq 0, ~d=0.
\end{cases}
\eeq
Moreover, if $d \neq 0,$ $M \Ge 0$ and $\omega_{_{M, l}}$ denotes the weight function of the representing measure of $\big\{\frac{1}{p(m,n)^l}\big\}_{m,n \in \mathbb{Z}_+},$ then 
\beq \label{asymptote}
\lim_{\overset{M > 0}{M \rar 0}}\omega_{_{M, l}} = \omega_{_{0, l}}, \quad l \Ge 1. 
\eeq
\end{theorem}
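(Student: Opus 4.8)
The plan is to prove Theorem~\ref{deg-1-1} in three stages: first establish the equivalence of (i)--(iv), then verify the explicit representing measures in \eqref{w-l-s-t}, and finally deduce the asymptotic relation \eqref{asymptote}. For the equivalences, I would observe that (iv)$\Rightarrow$(iii)$\Rightarrow$(i) and (iii)$\Rightarrow$(i) are immediate (the latter by Remark~\ref{JCM-rmk} together with the fact that a joint completely monotone net extends to such a function via its representing measure, or directly from the Hausdorff moment characterization). The implication (i)$\Rightarrow$(ii) should come from Lemma~\ref{lem-necessary}: writing $p(x,y)=b(x)+a(x)y$ with $b(x)=a+bx$ and $a(x)=c+dx$, the inequality $a'(x)b(x)\le a(x)b'(x)$ at $x=0$ reads $d\cdot a \le c\cdot b$, i.e. $ad-bc\le 0$, which is exactly $M\ge 0$. (One must also dispose of degenerate cases where some coefficients vanish — e.g. $d=0$, or $b=0$ — checking directly that the positivity-on-$\mathbb{R}^2_+$ hypothesis forces $a>0$ and $b,c,d\ge 0$, and that $M\ge 0$ remains the right condition.) The heart of the matter is (ii)$\Rightarrow$(iv) together with the explicit formula; once we exhibit the representing measure explicitly and show it has no mass on the coordinate axes, minimality follows from \cite[Proposition~5]{Athavale2001} as in the proof of Theorem~\ref{main-thm}(b).

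For the explicit representing measures, I would split into the three cases of \eqref{w-l-s-t}. When $d=0$ and $b\ne 0$, $p(x,y)=a+bx+cy = b(x+\tfrac{a}{b}) + c y$ with $a=c$ constant; here $\tfrac{1}{p(m,n)^l}$ can be integrated directly using \eqref{EQN2} in the variable controlling $x$, producing the Dirac measure $\delta_{s^{c/b}}(dt)$ supported on the curve $t=s^{c/b}$, since the $n$-dependence enters only through $s^{n\cdot c/b}$ after the substitution. When $d\ne 0$, I would follow the strategy of Proof~II of Theorem~\ref{main-thm}(a): use the partial-fraction / exponential-series device. Concretely, with $M=bc-ad$, one has $\tfrac{b(x)}{a(x)} = \tfrac{b}{d} + \tfrac{M/d}{(c/d)+x} \cdot\tfrac{1}{d}$ — more precisely $\tfrac{a+bx}{c+dx} = \tfrac{b}{d} - \tfrac{M}{d(dx+c)}$ — so that $t^{b(m)/a(m)}$ factors as $t^{b/d}$ times $t^{-(M/d^2)/(m+c/d)}$, and expanding the second factor in its power series and applying \eqref{EQN2} term by term yields a weight function which, upon recognizing the series $\sum_l \tfrac{z^{2l}}{l!\,(l+\kappa)!}$ as a modified Bessel function via \eqref{I-nu}, gives the $I_{l-1}$-expression. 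For general $l$ one uses the $l$-fold version of \eqref{EQN2}, i.e. the $(\log s)^{l-1}$ kernel, which shifts the Bessel order to $l-1$ (and, when $M=0$, collapses the Bessel series to its leading term, giving the polynomial kernel $(\log s\log t)^{l-1}$). I would then verify positivity of these kernels on $(0,1)^2$: the sign of $\log s$ and $\log t$ is negative, but they enter in even total powers or inside $I_{l-1}$ of a real argument (here $M\ge0$ ensures $\sqrt{M\log s\log t}$ is real, as $\log s\log t>0$), and $I_{l-1}$ has nonnegative Taylor coefficients, so the whole weight is $\ge 0$; finiteness of the total mass follows since it must equal $1/p(0,0)^l = 1/a^l$, or by direct integrability estimates.

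Finally, \eqref{asymptote} follows by inspection: in the case $d\ne 0$, $M>0$, the weight is
\[
\omega_{_{M,l}}(s,t) = \frac{s^{c/d-1}t^{b/d-1}}{d(l-1)!}\left(\frac{\log s\log t}{M}\right)^{\frac{l-1}{2}} I_{l-1}\!\left(\frac{2}{d}\sqrt{M\log s\log t}\right),
\]
and using the series \eqref{I-nu}, $I_{l-1}(z) = (z/2)^{l-1}\sum_{k\ge0}(z^2/4)^k/(k!\,\Gamma(l+k))$, the prefactor $(\log s\log t/M)^{(l-1)/2}$ cancels the $M$-dependence in the leading $(z/2)^{l-1}$ term, leaving
\[
\omega_{_{M,l}}(s,t) = \frac{s^{c/d-1}t^{b/d-1}}{d^l(l-1)!}\sum_{k\ge0}\frac{(M/d^2)^k (\log s\log t)^{l-1+k}}{k!\,\Gamma(l+k)},
\]
which converges, as $M\to 0^+$, pointwise (and locally uniformly on $(0,1)^2$) to the $k=0$ term $\dfrac{s^{c/d-1}t^{b/d-1}}{d^l(l-1)!}\cdot\dfrac{(\log s\log t)^{l-1}}{(l-1)!} = \omega_{_{0,l}}(s,t)$. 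The only mild subtlety — and the step I expect to be the most delicate — is organizing the $l$-fold iterated application of \eqref{EQN2} and the interchange of summation and integration cleanly (justified by the dominated convergence theorem, since all terms are nonnegative and the total sums to the finite value $1/a^l$), so that the Bessel identification is transparent rather than a brute-force rearrangement; the rest is bookkeeping.
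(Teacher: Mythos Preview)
Your overall strategy matches the paper's closely: the positivity constraints $a>0$, $b,c,d\ge 0$, the implication (i)$\Rightarrow$(ii) via Lemma~\ref{lem-necessary}, the implication (ii)$\Rightarrow$(iv) by exhibiting the representing measure explicitly (splitting into the cases $d=0$ and $d\neq 0$), the Bessel identification from the series, and the limit \eqref{asymptote} from the power series expansion of $I_{l-1}$ --- all of this is essentially what the paper does.

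There is, however, a genuine gap in your cycle of implications. You claim (iii)$\Rightarrow$(i) is ``immediate'' via Remark~\ref{JCM-rmk} and the Hausdorff representation, but that remark goes the other way (function $\Rightarrow$ net), and the representing measure $\mu$ of the net only gives you a joint completely monotone function $g(x,y)=\int s^x t^y\,d\mu$ that agrees with $1/p$ on $\mathbb Z^2_+$; identifying $g$ with $1/p$ on all of $\mathbb R^2_+$ requires a nontrivial uniqueness argument (e.g.\ Carlson's theorem), which you do not supply. The paper avoids this difficulty entirely. Its logical chain is (iv)$\Rightarrow$(iii)$\Rightarrow$(ii)$\Rightarrow$(iv)$\Rightarrow$(i)$\Rightarrow$(ii): the step (iv)$\Rightarrow$(i) is handled by quoting \cite[Proposition~6]{Athavale2001}, and, crucially, (iii)$\Rightarrow$(ii) is proved \emph{directly} by a contrapositive argument you do not mention. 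Namely, the paper carries out the weight computation \eqref{wt-d-nonzero} without assuming the sign of $M$; when $M<0$ the resulting density for $l=1$ becomes $\tfrac{1}{d}s^{c/d-1}t^{b/d-1}J_0\!\big(\tfrac{2}{d}\sqrt{-M\log s\log t}\big)$, and since the ordinary Bessel function $J_0$ takes negative values on some interval, the would-be representing measure is signed. By uniqueness of the Hausdorff representing measure, the net cannot be joint completely monotone. This $J_0$ observation is the missing idea in your plan; once you insert it (or alternatively justify (iii)$\Rightarrow$(i) properly and then use your (i)$\Rightarrow$(ii)), the argument closes.
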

\begin{proof}
Since $p(0, 0) = a$ and the image of $p$ is contained in $(0, \infty),$ $a$ is positive. After dividing $p(x, 0)$ by $x$ and letting $x \rar \infty,$ we conclude that $b$ is nonnegative. By symmetry, $c$ is also nonnegative.  Dividing $p(x, x)$ by $x^2$ and letting  $x \rar \infty,$ we see that $d$ is nonnegative. Thus
\beq \label{abcd-positive}
a >0, ~b \Ge 0, ~c \Ge 0, ~d \Ge 0.
\eeq
To see the equivalence of (i)-(iv), we show that
\begin{enumerate}
\item[] (iv)$\Longrightarrow$(iii)$\Longrightarrow$(ii)$\Longrightarrow$(iv)$\Longrightarrow$(i)$\Longrightarrow$(ii).
\end{enumerate}

(i)$\Rightarrow$(ii): Let $a(x)=c+dx,$ $b(x)=a+bx.$ By Lemma~\ref{lem-necessary}, $a'(0)b(0) \Le a(0)b'(0)$ or equivalently, $M \Ge 0.$

(iv)$\Rightarrow$(i): This follows from \cite[Proposition~6]{Athavale2001}.

(ii)$\Rightarrow$(iv): 
We first consider the case of $d \neq 0.$ Since $ad-bc \Le 0,$ by \eqref{abcd-positive}, 
$b$ and $c$ are positive. 
For $x,y \in \mathbb{R}_+,$ we write 
\beqn
p(x,y)=a+bx+cy+dxy=b(x+a/b)+d(x+c/d)y.
\eeqn
Since $\{\frac{1}{m+ c/d}\}_{m \in \mathbb Z_+}$ is a completely monotone sequence and  
$a/b \Le c/d,$ an application of  Theorem~\ref{coro-main}(i) (to $a(m)=c+dm$ and $b(m)=a+ bm,$ $m \in \mathbb Z_+$) shows that $\big\{\frac{1}{p(m,n)}\big\}_{m,n \in \mathbb{Z}_+}$ is a minimal joint completely monotone net. We now consider the case of $d=0.$ If both $b$ and $c$ are zero, then $\frac{1}{p},$ being a constant polynomial, is a joint completely monotone net. Now consider the case when either $b \neq 0$ or $c \neq 0.$ By the symmetry, we may assume that $b \neq 0.$ 
Note that  
\beqn 
 && \frac{1}{(a+bm+cn)^l}  \\ \notag
&& \overset{\eqref{EQN2}}=
\frac{(-1)^{l-1}}{b^l (l-1)!}\int_{[0, 1]}(\log s)^{l-1}s^{\frac{a+cn}{b}-1+m}ds, ~ m,n \in \mathbb{Z}_+.
\eeqn
Since $s^{\frac{cn}{b}}=\int_{0}^1 t^n d\delta_{s^{c/b}}(t)$ for $n \Ge 0$ and $s \in (0, 1),$ in case of $d \neq 0,$ the representing measure of $\big\{\frac{1}{p(m,n)}\big\}_{m,n \in \mathbb{Z}_+}$ is given by 
\eqref{w-l-s-t}. It now immediate from \cite[Proposition~5]{Athavale2001} that $\big\{\frac{1}{p(m,n)}\big\}_{m,n \in \mathbb{Z}_+}$ is a minimal joint completely monotone net. 

(iv)$\Rightarrow$(iii): Trivial.

Before we prove (iii)$\Rightarrow$(ii), let us see some general facts under the assumption that $d \neq 0.$
Let $l$ be a positive integer.   
Since $\big\{\frac{1}{p(m,n)}\big\}_{m,n \in \mathbb{Z}_+}$ is joint completely monotone, so is the net $\big\{\frac{1}{p(m,n)^l}\big\}_{m,n \in \mathbb{Z}_+}$ (see \cite[Lemma~8.2.1(v)]{BCR1984}). 
We now find the representing measure of $\big\{\frac{1}{p(m,n)^l}\big\}_{m,n \in \mathbb{Z}_+}.$   
Note that for $m,n \in \mathbb{Z_+},$
\allowdisplaybreaks
\beq
\label{eq:someequation}
\notag
\frac{1}{p(m,n)^l} 
& = & \frac{1}{(b+dn)^l(\frac{a+cn}{b+dn}+m)^l}\\ \notag
 & \overset{\eqref{EQN2}}= & \frac{1}{(b+dn)^l}\frac{(-1)^{l-1}}{(l-1)!}\int_{[0, 1]} (\log s)^{l-1}s^{\frac{a+cn}{b+dn}-1+m}ds\\   
& = & \int_{[0, 1]} s^{m} \Big(\frac{1}{(b+dn)^l} \, s^{\frac{a- \frac{bc}{d}}{b+dn}}\Big) \frac{(-\log s)^{l-1}s^{\frac{c}{d}-1}}{(l-1)!} \, ds.
\eeq
Further, for any $s >0$ and $n \in \mathbb Z_+,$  
\beqn 
& & \frac{1}{(b+dn)^l} \, s^{\frac{a-\frac{bc}{d}}{b+dn}} \\
 & = & \frac{1}{(b+dn)^l}\, e^{\frac{a-\frac{bc}{d}}{b+dn}\log s}  \\
 & = & \sum_{k=0}^{\infty}\frac{1}{k!}\frac{(a-\frac{bc}{d})^{k}}{(b+dn)^{k+l}}\, (\log{s})^k \\
 & \overset{\eqref{EQN2}} = & \sum_{k=0}^{\infty}\frac{(a-\frac{bc}{d})^{k}}{d^{k+l}k!(k+l-1)! } \, (\log{s})^k \int_{[0, 1]}(-\log t)^{k+l-1}t^{\frac{b}{d}-1+n}dt \\
& = & \int_{[0, 1]} t^n \Bigg(\sum_{k=0}^{\infty}\frac{(\frac{bc}{d}-a)^{k}}{d^{k+l}k!(k+l-1)!} (-\log{s})^k(-\log t)^{k+l-1}t^{\frac{b}{d}-1}\Bigg)dt,
\eeqn
where we can interchange the series and the integral by the dominated convergence theorem. 
This combined with \eqref{eq:someequation} shows that 
\beqn
\frac{1}{p(m,n)^l}=\int_0^1\int_{[0, 1]} s^m t^n \omega_l(s,t)dt ds,  \quad m,n \in \mathbb{Z_+},
\eeqn 
where  $\omega_l$ is as given in \eqref{w-l-s-t}.
\begin{align} \label{wt-d-nonzero}
\omega_l(s,t)=\frac{s^{\frac{c}{d}-1} t^{\frac{b}{d}-1}}{d^l(l-1)!}  \sum_{k=0}^{\infty}\left(\frac{M}{d^2}\right)^{k} \frac{(\log t\log s)^{k+l-1}}{k!(k+l-1)!}, \quad s, t \in (0, 1).
\end{align}
If $M \Ge 0,$ then the above discussion together with \eqref{I-nu} yields \eqref{w-l-s-t} and~\eqref{asymptote}.

(iii)$\Rightarrow$(ii): 
Assume that $M = bc-ad <0.$ By \eqref{abcd-positive}, $d > 0.$
Hence, by \eqref{wt-d-nonzero},  
\beqn \omega_1(s, t) &=& \frac{s^{\frac{c}{d}-1} t^{\frac{b}{d}-1}}{d}  \sum_{k=0}^{\infty}\left(\frac{M}{d^2}\right)^{k} \frac{(\log t\log s)^{k}}{(k!)^2} \\
&=& \frac{s^{\frac{c}{d}-1}t^{\frac{b}{d}-1}}{d}  J_{0}\left(\frac{2}{d}\sqrt{-M\log s \log t}\right), \quad s, t \in (0, 1).
\eeqn
However, the Bessel function $J_0$ takes negative values on some open interval in $(0, \infty)$ (see~\cite[Eqn~9.1.18]{AS1992}), and hence 
 $\omega_1$ takes negative values on some open subset of $(0, 1) \times (0, 1).$ It follows that  
$\big\{\frac{1}{p(m,n)}\big\}_{m,n \in \mathbb{Z}_+}$ is not a joint completely monotone net. 
This yields (iii)$\Rightarrow$(ii), completing the proof.  
\end{proof}

\subsection*{Case of bi-degree $(2, 1)$}

A nonconstant polynomial $p : \mathbb R^2_+ \rar (0, \infty)$ of bi-degree $(2, 1)$ is given by $p(x,y)=b(x)+a(x)y$. Note that $p(x,0)=b(x)$, and hence $b$ is a mapping from $\mathbb R_+$ into $(0,\infty).$ If $a(x_0)<0$ for some $x_0 \in \mathbb R_+,$ then for large value of $y$, $p(x_0,y)<0$, and hence $a$ maps  $\mathbb R_+$ into $\mathbb R_+.$ 
Assume that $\frac{1}{p}$ is joint completely monotone. 
By Example \ref{exam-1.3-i}(i), $a$ is never constant. Also, by Lemma~\ref{lem-necessary}, $\deg a \Le \deg b.$
Furthermore, we have the following$:$
\begin{enumerate}
\item[$\bullet$] 
If $b$ has a complex root, then $\frac{1}{p(\cdot,0)}$ is not completely monotone (see \cite[Proposition~4.3]{AC2017}), and hence $b$ has negative real roots. 
\item[$\bullet$] 
If possible, assume that $a$ has a complex root. Then $a$ maps
$\mathbb{R}$ into $(0, \infty),$ and hence for any $x \in \mathbb{R},$ we have $b(x)+a(x)y > 0$ for sufficiently large $y.$ This shows $b+ay$ has a complex root, and by another application of \cite[Proposition~4.3]{AC2017}, $\frac{1}{p}$ is not separately monotone. This together with Remark~\ref{JCM-rmk} leads to a contradiction that $\frac{1}{p}$ is not joint completely monotone. Hence $a$ has nonpositive real roots. 
\end{enumerate}

We summarize below the discussion in the preceding paragraph$:$

\begin{proposition} \label{prop-deg-2-1}
Consider a nonconstant polynomial $p : \mathbb R^2_+ \rar (0, \infty)$ of bi-degree $(2, 1)$ given by $p(x,y)=b(x)+a(x)y.$ Assume that $\frac{1}{p}$ is joint completely monotone. Then the following statements are valid$:$
\begin{enumerate}
\item[$\mathrm{(i)}$] $a$ maps $\mathbb R_+$ into $\mathbb R_+,$ $1 \Le \deg a \Le 2$ and $a$ has nonpositive real roots,
\item[$\mathrm{(ii)}$] $b$ maps $\mathbb R_+$ into $(0, \infty),$ $\deg b =2$ and $b$ has negative real roots.
\end{enumerate}
In particular, the following possibilities occur$:$
\begin{enumerate}
\item[$\mathrm{(a)}$] $p(x,y)=b_0 (x+b_1)(x+b_2) + a_0 (x+a_1)(x+a_2)y,$
\item[$\mathrm{(b)}$] $p(x,y)=b_0 (x+b_1)(x+b_2) + a_0 (x+a_1)y,$
\end{enumerate}
where $a_0, b_0, b_1, b_2 > 0$ and $a_1, a_2 \Ge 0.$ 
\end{proposition}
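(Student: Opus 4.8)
The plan is to assemble and organize the observations recorded in the paragraph preceding the statement. Write $p(x,y)=b(x)+a(x)y$ and assume throughout that $\frac{1}{p}$ is joint completely monotone. First I would settle the degrees. Since $p$ has bi-degree $(2,1)$, the coefficient polynomial $a$ is not identically zero; and if $a(x_0)<0$ for some $x_0\in\mathbb{R}_+$, then $p(x_0,y)\to-\infty$ as $y\to\infty$, which is impossible, so $a$ maps $\mathbb{R}_+$ into $\mathbb{R}_+$. Also $p(x,0)=b(x)$, so $b$ maps $\mathbb{R}_+$ into $(0,\infty)$, and by Lemma~\ref{lem-necessary} we get $\deg a\Le\deg b$, whence $\deg b=2$ since the $x$-degree of $p$ is $2$. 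Next, $a$ cannot be constant: if $a\equiv a_0$ with $a_0\neq 0$, then $p(x,y)=b(x)+a_0y$ has the shape $\alpha+\beta x+\gamma x^2+\delta y$ with $\delta=a_0\neq 0$ and $\gamma\neq 0$ (as $\deg b=2$), so by Example~\ref{exam-1.3-i}(i) $\frac{1}{p}$ is not even separate completely monotone, contradicting Remark~\ref{JCM-rmk}. Hence $1\Le\deg a\Le 2$, and since $a\geq 0$ on $\mathbb{R}_+$ with $\deg a\in\{1,2\}$, the leading coefficient $a_0$ of $a$ is positive; likewise the leading coefficient $b_0$ of $b$ is positive.

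Next I would locate the roots of $b$. By Remark~\ref{JCM-rmk}, $\frac{1}{p}$ is separate completely monotone, so in particular $x\mapsto \frac{1}{p(x,0)}=\frac{1}{b(x)}$ is a completely monotone function; a polynomial from $\mathbb{R}_+$ into $(0,\infty)$ whose reciprocal is completely monotone has only real roots (\cite[Proposition~4.3]{AC2017}), so $b$ has only real roots, and since $b>0$ on $\mathbb{R}_+$ all of them lie in $(-\infty,0)$. This gives $b(x)=b_0(x+b_1)(x+b_2)$ with $b_0,b_1,b_2>0$, which is (ii).

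For the roots of $a$, I would argue by contradiction. Suppose $a$ has a non-real root. Then $\deg a=2$, and since $a_0>0$ and $a$ has no real root, $a(x)>0$ for every $x\in\mathbb{R}$ and $\epsilon:=\inf_{\mathbb{R}}a>0$. As $b$ is a quadratic with positive leading coefficient, it is bounded below on $\mathbb{R}$, say $b\geq -C$; hence for every $y>C/\epsilon$ one has $p(x,y)=b(x)+a(x)y\geq -C+\epsilon y>0$ for all $x\in\mathbb{R}$, so the quadratic $p(\cdot,y)$ has a non-real root. But $x\mapsto\frac{1}{p(x,y)}$ is completely monotone by Remark~\ref{JCM-rmk}, contradicting \cite[Proposition~4.3]{AC2017}. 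Hence $a$ has only real roots. That these roots are nonpositive is immediate from $a\geq 0$ on $\mathbb{R}_+$ when $\deg a=1$ or when $a$ has two distinct roots (then its larger root cannot be positive); the only remaining case, a double root of $a$ at some $r>0$, is precluded by the pointwise inequality $a(x)a'(x)\Ge 0$ on $\mathbb{R}_+$, which comes out of the joint complete monotonicity of $\frac{1}{p}$ just as in the proof of Lemma~\ref{lem-necessary}, upon dividing the inequality $(n+1)a(x)(b'(x)+a'(x)y)\Ge na'(x)(b(x)+a(x)y)$ by $y$ and letting $y\to\infty$ (for $a(x)=a_0(x-r)^2$ one has $a(x)a'(x)=2a_0^2(x-r)^3<0$ on $[0,r)$). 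With $a_0>0$ and $1\Le\deg a\Le 2$, this proves (i).

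Finally, the ``in particular'' assertion just reads off factorizations: $b(x)=b_0(x+b_1)(x+b_2)$ with $b_0,b_1,b_2>0$, and $a(x)=a_0\prod_j(x+a_j)$ with $a_0>0$ and each $a_j\Ge 0$, the product running over two indices when $\deg a=2$ (case (a)) and over one index when $\deg a=1$ (case (b)). I expect the only genuinely delicate point to be the non-real-root step for $a$: one must notice that such an $a$ is bounded below by a positive constant on all of $\mathbb{R}$ while $b$ is bounded below there, so that $p(\cdot,y)$ becomes strictly positive on $\mathbb{R}$ for large $y$; excluding a positive double root of $a$ is a minor secondary point, requiring the $y\to\infty$ form of the estimate from the proof of Lemma~\ref{lem-necessary} rather than \eqref{necessary-c} itself. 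Everything else is routine.
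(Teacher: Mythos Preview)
Your proof is correct and follows essentially the same route as the paper's (which is the paragraph immediately preceding the proposition): positivity of $p$ to control the ranges of $a$ and $b$; Example~\ref{exam-1.3-i}(i) and Lemma~\ref{lem-necessary} for the degrees; separate complete monotonicity together with \cite[Proposition~4.3]{AC2017} to force real roots of $b$ and of $a$. Your treatment is in fact more careful on one point: the paper passes from ``$a$ has no complex root'' directly to ``$a$ has nonpositive real roots,'' tacitly skipping the case of a positive double root of $a$, which you correctly recognize is not excluded by $a\Ge 0$ on $\mathbb{R}_+$ and which you dispose of via the inequality $a(x)a'(x)\Ge 0$ extracted from the proof of Lemma~\ref{lem-necessary}; alternatively, one could note that a positive double root still gives $a\Ge 0$ on all of $\mathbb{R}$, so for large $y$ the quadratic $p(\cdot,y)$ is strictly positive on $\mathbb{R}$ and hence has a complex root, and the paper's own argument then applies.
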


We discuss below the first sub-case of bi-degree $(2, 1),$ which is an outcome of a careful examination of the proof Theorem~\ref{main-thm}.
\begin{theorem}[Subcase (a) of bi-degree $(2, 1)$] \label{thm-bi-deg-2-1}
For $a_j, b_j \in (0, \infty),$ $j=0, 1,2,$ let  
$a(x)=a_0(x+a_1)(x+a_2)$ and $b(x)=b_0(x+b_1)(x+b_2), x\in \mathbb{R}_+$ with $a_1 \Le a_2$ and $b_1 \Le b_2.$  Then the following statements are valid$:$
\begin{enumerate}
\item[$\mathrm{(i)}$] $\big\{\frac{1}{b(m)+a(m)n}\big\}_{m,n \in \mathbb{Z}_+}$ is a minimal joint completely monotone net provided 
 \beq \label{assum-2-deg} 
 (b_1\Le a_1 \Le b_2~\text{or}~b_1\Le a_2 \Le b_2)~ \text{and}~ b_1+b_2 \Le a_1+a_2,
 \eeq
\item[$\mathrm{(ii)}$] if $\big\{ \frac{1}{b(m)+a(m)n}\big\}_{m,n \in \mathbb{Z}_+}$ is joint completely monotone, then 
\beqn
\frac{1}{a_1} + \frac{1}{a_2} \Le \frac{1}{b_1} + \frac{1}{b_2}, \quad b_1+b_2 \Le a_1+a_2.
\eeqn
\end{enumerate}
\end{theorem}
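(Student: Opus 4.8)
The plan is to treat part~(i) as a genuine sharpening of Theorem~\ref{coro-main}(i) for $k=2$, and part~(ii) as an application of Lemma~\ref{lem-necessary}. For part~(ii), I would first invoke Corollary~\ref{main-thm-coro} to upgrade joint complete monotonicity of the net $\big\{\frac{1}{b(m)+a(m)n}\big\}_{m,n}$ to minimality, and then \cite[Proposition~6]{Athavale2001} to pass to joint complete monotonicity of the function $\frac{1}{b(x)+a(x)y}$, so that Lemma~\ref{lem-necessary} applies. Setting $x=0$ in~\eqref{necessary-c} and simplifying $a'(0)b(0)\Le a(0)b'(0)$ with $a(x)=a_0(x+a_1)(x+a_2)$, $b(x)=b_0(x+b_1)(x+b_2)$ gives exactly $\frac{1}{a_1}+\frac{1}{a_2}\Le\frac{1}{b_1}+\frac{1}{b_2}$. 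For the second inequality, I would use~\eqref{necessary-c-4} (equivalently, that $a/b$ is nonincreasing on $\mathbb R_+$), divide $\prod(x+b_j)\Le\prod(x+a_j)$ appropriately and let $x\to\infty$ after cancelling $x^2$ and dividing by $x$, obtaining $b_1+b_2\Le a_1+a_2$. This is identical in structure to the end of the proof of Theorem~\ref{coro-main}(ii) and presents no real difficulty.

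The substantive content is part~(i). By Remark~\ref{assu-A1-used}, assumption (A1) in Theorem~\ref{main-thm} is used only to guarantee that the partial-fraction coefficients $c_1,c_2$ in~\eqref{p-frac} satisfy $c_1,c_2\Le 0$; everything else in Proof~II goes through verbatim, and (A2) holds with $c\equiv 1$ by~\eqref{uno-5}. So the plan is to show that under the weaker hypothesis~\eqref{assum-2-deg} one still has $c_1\Le 0$ and $c_2\Le 0$, whence the representing measure built in Proof~II of Theorem~\ref{main-thm}(a) is positive and the net is joint completely monotone; minimality then follows from Corollary~\ref{main-thm-coro}. Concretely, by~\eqref{formula-c-j} with $k=2$ (labelling so that $a_1\Le a_2$),
\[
c_1 = \frac{b_0}{a_0}\,\frac{(b_1-a_1)(b_2-a_1)}{a_2-a_1},\qquad
c_2 = \frac{b_0}{a_0}\,\frac{(b_1-a_2)(b_2-a_2)}{a_1-a_2},
\]
so $c_1\Le 0 \iff a_1\notin(b_1,b_2)^c$ fails appropriately, i.e. $\iff b_1\Le a_1\Le b_2$, and $c_2\Le 0 \iff b_1\Le a_2\Le b_2$. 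The point of~\eqref{assum-2-deg} is that we are allowed to have \emph{only one} of $a_1,a_2$ lying in $[b_1,b_2]$. I would handle the two cases in the disjunction of~\eqref{assum-2-deg} separately: if $b_1\Le a_1\Le b_2$ and $b_1\Le a_2\Le b_2$ we are in the original situation; otherwise exactly one root, say $a_2$, lies below $b_1$ or above $b_2$, and then one of $c_1,c_2$ is positive.

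When one coefficient, say $c_2>0$, is positive, the single-factor argument of Proof~II breaks, so the main obstacle is to represent the offending factor $t^{c_2/(m+a_2)}$ — or better, the product $t^{c_1/(m+a_1)}\,t^{c_2/(m+a_2)}$ taken together — as a Hausdorff moment sequence in $m$ for each fixed $t\in(0,1)$. The idea is not to split the product but to keep it intact: write $\frac{b(m)}{a(m)}=c_0+\frac{c_1}{m+a_1}+\frac{c_2}{m+a_2}$ and expand $t^{b(m)/a(m)-c_0}$ as a double power series in $\frac{1}{m+a_1}$ and $\frac{1}{m+a_2}$, then use the identity
\[
\frac{1}{(m+a_1)^i(m+a_2)^j}
=\int_0^1 s^{m}\,k_{i,j}(s)\,ds
\]
for a suitable (explicit, Beta-type) kernel $k_{i,j}$ supported on $(0,1)$, valid because $a_1,a_2>0$; summing over $i,j$ with the coefficients $\frac{(c_1\log t)^i(c_2\log t)^j}{i!\,j!}$ and justifying the interchange by dominated convergence produces a density whose sign must be controlled. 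The condition $b_1+b_2\Le a_1+a_2$, which is equivalent to $c_1+c_2\Le 0$ (since $c_1+c_2$ is, up to the positive factor $b_0/a_0$, equal to $(a_1+a_2)-(b_1+b_2)$ by comparing the $1/x$ coefficients of $b/a$ at infinity), is precisely what forces this density to be nonnegative: morally, the "dominant'' exponential decay coming from $c_1+c_2\Le 0$ overwhelms the oscillation introduced by the positive coefficient. I expect the technical heart of the proof to be verifying that this net kernel is nonnegative on $(0,1)$; a clean way to package it is to recognize the resulting series as a product or convolution involving a modified Bessel function $I_0$ (as already happens in~\eqref{form-w-j} and Theorem~\ref{deg-1-1}), reducing nonnegativity to a known positivity property of $I_0$ together with the sign of $c_1+c_2$. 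Once the kernel is shown nonnegative, Lemma~\ref{continuum} delivers joint complete monotonicity and Corollary~\ref{main-thm-coro} gives minimality.
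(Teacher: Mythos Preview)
Your treatment of part~(ii) is fine and matches the paper. The problem is part~(i): the double-series/Bessel route you sketch is not needed and, as stated, does not amount to a proof --- you never actually establish nonnegativity of the putative kernel, and your sign analysis of $c_1,c_2$ is partly wrong (with $a_1<a_2$ one has $c_2\le 0$ precisely when $a_2\notin(b_1,b_2)$, not when $a_2\in[b_1,b_2]$; so the ``new'' case beyond (A1) is $b_1\le a_1\le a_2\le b_2$, where $c_1\le 0$ but $c_2\ge 0$). Your plan also tacitly assumes that Proof~II is the only template, which is what forces you towards controlling each factor $t^{c_j/(m+a_j)}$ separately.

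The missing idea is that you should go back to Proof~I, i.e.\ to Lemma~\ref{Sev-vari}(ii): what is actually required there is only
\[
c_1 s^{a_1}+c_2 s^{a_2}\le 0\quad\text{for all } s\in(0,1),
\]
not $c_1\le 0$ and $c_2\le 0$ individually. For $a_1<a_2$ this inequality is equivalent, by the elementary observation $c_1 s^{a_1}+c_2 s^{a_2}=s^{a_1}(c_1+c_2 s^{a_2-a_1})$, to the pair $c_1\le 0$ and $c_1+c_2\le 0$ (this is Lemma~\ref{two-vari} in the paper). Now the hypothesis~\eqref{assum-2-deg} delivers exactly these two: first, the disjunction reduces to $b_1\le a_1\le b_2$ (if instead $b_1\le a_2\le b_2$, then $a_1\le a_2\le b_2$, and $a_1<b_1$ would contradict $b_1+b_2\le a_1+a_2$), which gives $(b_1-a_1)(b_2-a_1)\le 0$, i.e.\ $c_1\le 0$; second, your own computation $c_1+c_2=\tfrac{b_0}{a_0}\big((b_1+b_2)-(a_1+a_2)\big)$ turns $b_1+b_2\le a_1+a_2$ into $c_1+c_2\le 0$. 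The degenerate case $a_1=a_2$ needs a separate (easy) partial-fraction computation with a repeated root, and minimality then comes from Corollary~\ref{main-thm-coro} as you said.
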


We need a lemma in the proof of Theorem~\ref{thm-bi-deg-2-1}.
\begin{lemma}\label{two-vari}
Let $c_1,c_2,b_1,b_2\in \mathbb{R}$ with $b_1 < b_2$. The following statements are equivalent$:$ 
\begin{enumerate}
\item[$\mathrm{(i)}$] for every $s \in (0,1),$ $c_1s^{b_1}+c_2s^{b_2}\Le 0,$
\item[$\mathrm{(ii)}$] $c_1 \Le 0$ and $c_1+c_2 \Le 0.$
\end{enumerate} 
\end{lemma}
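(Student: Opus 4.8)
The plan is to prove Lemma~\ref{two-vari} by establishing both implications, treating the direction $\mathrm{(ii)}\Rightarrow\mathrm{(i)}$ first since it is the easy one. Assume $c_1 \Le 0$ and $c_1 + c_2 \Le 0$. For $s \in (0,1)$ we have $s^{b_1} > 0$ and $s^{b_2} > 0$, and since $b_1 < b_2$ and $0 < s < 1$, the function $s \mapsto s^{b}$ is decreasing in $b$, so $s^{b_1} \Ge s^{b_2} > 0$. Write $c_1 s^{b_1} + c_2 s^{b_2} = (c_1 + c_2)s^{b_2} + c_1(s^{b_1} - s^{b_2})$. The first term is $\Le 0$ because $c_1 + c_2 \Le 0$ and $s^{b_2} > 0$; the second is $\Le 0$ because $c_1 \Le 0$ and $s^{b_1} - s^{b_2} \Ge 0$. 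Hence the sum is $\Le 0$, giving $\mathrm{(i)}$.

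For the direction $\mathrm{(i)}\Rightarrow\mathrm{(ii)}$, I would extract the two inequalities by taking appropriate limits in the inequality $c_1 s^{b_1} + c_2 s^{b_2} \Le 0$. To get $c_1 + c_2 \Le 0$, let $s \rar 1^-$: both $s^{b_1} \rar 1$ and $s^{b_2} \rar 1$, so $c_1 s^{b_1} + c_2 s^{b_2} \rar c_1 + c_2$, and the inequality is preserved in the limit. To get $c_1 \Le 0$, I would divide through by the dominant power as $s \rar 0^+$. Since $b_1 < b_2$, divide by $s^{b_1}$ (legitimate as $s^{b_1} > 0$) to obtain $c_1 + c_2 s^{b_2 - b_1} \Le 0$ for all $s \in (0,1)$; now $b_2 - b_1 > 0$ forces $s^{b_2-b_1} \rar 0$ as $s \rar 0^+$, so passing to the limit yields $c_1 \Le 0$. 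This completes the equivalence.

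The argument is entirely elementary; the only mild subtlety — more a bookkeeping point than an obstacle — is that the hypothesis $b_1 < b_2$ is strict, which is exactly what makes the exponent $b_2 - b_1$ in the division step strictly positive, so the limit $s^{b_2 - b_1} \to 0$ is valid; if one only had $b_1 \Le b_2$ the conclusion would have to be restated. I do not anticipate any real difficulty, and I would not expect to need any result from earlier in the paper for this lemma itself, though it will feed into the analysis of subcase (a) of bi-degree $(2,1)$ via the partial-fraction coefficients appearing in \eqref{p-frac} and \eqref{formula-c-j}.
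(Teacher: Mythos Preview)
Your proof is correct and follows essentially the same route as the paper: both directions rely on factoring out $s^{b_1}$ (or equivalently dividing by it) and passing to the limits $s\to 0^+$ and $s\to 1^-$. The only cosmetic difference is in $\mathrm{(ii)}\Rightarrow\mathrm{(i)}$, where the paper splits into the cases $c_2\le 0$ and $c_2>0$, whereas your decomposition $c_1 s^{b_1}+c_2 s^{b_2}=(c_1+c_2)s^{b_2}+c_1(s^{b_1}-s^{b_2})$ handles both at once; this is a mild streamlining but not a different idea.
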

\begin{proof} For $s \in (0,1),$ note that 
	\beq \label{basic-eq}
	c_1s^{b_1}+c_2s^{b_2}=s^{b_1}(c_1+c_2s^{b_2-b_1}).
	\eeq
(i)$\Rightarrow$(ii) 
	Since for every $s \in (0,1),$ $s^{b_1} > 0$ and $c_1s^{b_1}+c_2s^{b_2}\Le 0$, we must have $c_1+c_2s^{b_2-b_1}\Le 0$ for every $s \in (0, 1).$ Since $b_2 - b_1 > 0,$ letting $s \rar 0$, we obtain $c_1 \Le 0,$ and letting $s \rar 1,$ we obtain $c_1+c_2 \Le 0$. 
	
(ii)$\Rightarrow$(i) If $c_2 \Le 0$ then clearly for every $s \in (0,1),$ $c_1s^{b_1}+c_2s^{b_2}\Le 0.$ So we may assume that $c_2 > 0.$  In view of \eqref{basic-eq}, it suffices to check that $c_1+c_2s^{b_2-b_1}\Le 0$ for every $s \in (0, 1).$ Indeed, since $b_2 - b_1 > 0,$ 
	\beqn
	c_1+c_2s^{b_2-b_1} \Le c_1+c_2\Le 0, \quad s\in (0,1).
	\eeqn
	This completes the proof.
\end{proof}

\begin{proof}[Proof of Theorem~\ref{thm-bi-deg-2-1}] 
Assume that \eqref{assum-2-deg} holds. 
If  $b_1\Le a_2 \Le b_2$ and $b_1+b_2 \Le a_1+a_2,$ then  $b_1\Le a_1 \Le b_2.$ Indeed, since $a_1 \Le a_2 \Le b_2,$ if $a_1 < b_1,$ then $a_1 + a_2 < b_1 + b_2.$ Hence, without loss of generality, we may assume that $b_1\Le a_1 \Le b_2.$

If $b_1=b_2,$ then $a_1=b_1$ and $b_2 \Le a_2.$ The desired conclusion in this case now follows from Theorem~\ref{coro-main}(i). Hence, we may assume that $b_1 < b_2.$
We imitate the proof of Theorem~\ref{main-thm}. Indeed, in view the discussion following \eqref{p-frac}, it suffices to check that 
\beq \label{c-j-s}
c_1s^{b_1}+c_2s^{b_2}\Le 0, \quad s \in (0,1),
\eeq
where $c_1$ and $c_2$ are given by \eqref{formula-c-j}. 
We consider two cases$:$

$a_1 < a_2$: In view of  Lemma~\ref{two-vari}, it suffices to check 
that $c_1 \Le 0$ and $c_1 + c_2 \Le 0.$ 
By \eqref{formula-c-j}, 
$$c_1=\frac{b_0}{a_0} \frac{(b_1-a_1)(b_2-a_1)}{(a_2-a_1)}, \quad c_2=-\frac{b_0}{a_0} \frac{(b_1-a_2)(b_2-a_2)}{(a_2-a_1)}.$$ Hence,
$c_1 \Le 0$ and $c_1 + c_2 \Le 0$  if and only if 
\beqn
(b_1-a_1)(b_2-a_1) \Le 0, \quad (b_1-a_1)(b_2-a_1) \Le (b_1-a_2)(b_2-a_2).
\eeqn
This is easily seen to be equivalent to   
\beqn
(b_1-a_1)(b_2-a_1) \Le 0, \quad (b_1+b_2)(a_2-a_1) \Le (a_1+a_2)(a_2-a_1).
\eeqn
Both these inequalities follow at once from the assumption \eqref{assum-2-deg}.

$a_1=a_2$: By the partial fraction, 
\beqn
\frac{(m+b_1)(m+b_2)}{(m+a_1)^2}=1 + \frac{c_1}{m+a_1}+ \frac{c_2}{(m+a_1)^2}, \quad m \in \mathbb{Z}_+,
\eeqn
where $c_1=b_1+b_2-2a_1$ and $c_2 = (b_1-a_1)(b_2-a_1).$ By \eqref{assum-2-deg}, $c_1 \Le 0$ and $c_2 \Le 0,$ and hence we get \eqref{c-j-s}. The fact that $\big\{\frac{1}{b(m)+a(m)n}\big\}_{m,n \in \mathbb{Z}_+}$ is a minimal joint completely monotone net now follows from Corollary~\ref{main-thm-coro}.

Part (ii) is a special case of Theorem~\ref{coro-main}(ii).
\end{proof}
\begin{remark} \label{following} 
Assume that \eqref{assum-2-deg} holds. 
It follows from Theorem~\ref{thm-bi-deg-2-1}(i) that $\big\{\frac{1}{b(m)+a(m)n}\big\}_{m,n \in \mathbb{Z}_+}$ is a minimal joint completely monotone net. This combined with \cite[Proposition~6]{Athavale2001} shows that $\frac{1}{p}$ is a joint completely monotone function. 
\hfill $\diamondsuit$
\end{remark}

Here is the second sub-case of bi-degree $(2, 1).$
\begin{theorem}[Subcase (b) of bi-degree $(2, 1)$]  \label{bi-deg-2-1-second}
For $a_0, a_1, b_0, b_1, b_2 \in (0,\infty),$ let  
$a(x)=a_0(x+a_1)$ and $b(x)=b_0(x+b_1)(x+b_2),~ x\in \mathbb{R}_+.$ If $b_1 \Le a_1 \Le b_2,$ then $\big\{ \frac{1}{b(m)+a(m)n}\big\}_{m,n \in \mathbb{Z}_+}$ is joint completely monotone. 
Conversely, if $\big\{ \frac{1}{b(m)+a(m)n}\big\}_{m,n \in \mathbb{Z}_+}$ is joint completely monotone, then $\frac{1}{a_1} \Le \frac{1}{b_1}  + \frac{1}{b_2}.$
\end{theorem}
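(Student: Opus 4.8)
The plan is to prove the two implications separately, each by reduction to a result already established above.

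\emph{Sufficiency of $b_1 \Le a_1 \Le b_2$.} I would realize the net as a pointwise limit of nets covered by Theorem~\ref{thm-bi-deg-2-1}(i). For a parameter $A > 0$, set $a_A(x) = \frac{a_0}{A}(x+a_1)(x+A)$; then $a_A$ has the form required in Theorem~\ref{thm-bi-deg-2-1}, with roots $a_1 \Le A$ once $A \Ge a_1$, and $a_A(m) \rar a_0(m+a_1) = a(m)$ as $A \rar \infty$ for each fixed $m$. As soon as $A \Ge \max\{a_1,\, b_1+b_2-a_1\}$, condition~\eqref{assum-2-deg} holds for the pair $(a_A, b)$: the disjunct ``$b_1 \Le a_1 \Le b_2$'' is exactly our hypothesis, and ``$b_1+b_2 \Le a_1 + A$'' holds for $A$ large. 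Hence Theorem~\ref{thm-bi-deg-2-1}(i) shows that $\big\{\frac{1}{b(m)+a_A(m)n}\big\}_{m,n \in \mathbb{Z}_+}$ is joint completely monotone. Since every defining inequality $(-1)^{|\beta|}\triangle^{\beta} a_{\alpha} \Ge 0$ is a fixed finite linear combination of finitely many entries, each of which depends continuously on $A$ and converges as $A \rar \infty$ to the corresponding entry of $\big\{\frac{1}{b(m)+a(m)n}\big\}_{m,n \in \mathbb{Z}_+}$, joint complete monotonicity passes to the limit, giving the first assertion. (Alternatively one imitates Proof~I/II of Theorem~\ref{main-thm} directly: since $\deg b = 2 > 1 = \deg a$, the partial fraction expansion now reads $\frac{b(m)}{a(m)} = \frac{b_0}{a_0}(m+b_1+b_2-a_1) + \frac{b_0}{a_0}\frac{(a_1-b_1)(a_1-b_2)}{m+a_1}$, whose residue is $\Le 0$ thanks to $b_1 \Le a_1 \Le b_2$; in \eqref{s-power-prod} the linear polynomial part contributes only the geometric factor $(t^{b_0/a_0})^m$, a Hausdorff moment sequence, and the rest proceeds as in \eqref{bidegree-second-case}--\eqref{form-w-j}, so that \eqref{claim} holds and Lemma~\ref{continuum} applies with $c \equiv 1$.)

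\emph{Necessity.} Suppose $\big\{\frac{1}{b(m)+a(m)n}\big\}_{m,n \in \mathbb{Z}_+}$ is joint completely monotone. The strategy is to upgrade this to joint complete monotonicity of the \emph{function} $\frac{1}{p}$, after which Lemma~\ref{lem-necessary} delivers the conclusion. First, the net is minimal: the argument of Corollary~\ref{main-thm-coro} goes through here, since $(A2)$ holds with $c \equiv 1$ because $a(x) = a_0(x+a_1)$ has a negative real root (see \eqref{uno-5}), and the representing measure has the form $\nu_t(ds)\,dt$ exactly as in \eqref{weighted-L}--\eqref{rep-main-seq}, where now $\nu_t$ is a multiplicative convolution of the weighted Lebesgue measure representing $\big\{\frac{1}{a(m)}\big\}_m$, the point mass $\delta_{t^{b_0/a_0}}$ coming from the linear part of $\frac{b}{a}$, and $\mu_{1,t} = \delta_1 + w_1(\cdot,t)\,ds$ coming from the factor $m+a_1$; each of these has positive total mass and none charges $\{0\}$ (the point mass sits at $t^{b_0/a_0} > 0$), so Lemma~\ref{convolution-mr} applies and both $\eta(\{0\}\times[0,1])$ and $\eta([0,1]\times\{0\})$ vanish. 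By \cite[Proposition~6]{Athavale2001} the function $\frac{1}{p}$ is then joint completely monotone, so Lemma~\ref{lem-necessary} is applicable. Evaluating~\eqref{necessary-c} at $x = 0$, with $a'(0) = a_0$, $b(0) = b_0 b_1 b_2$, $a(0) = a_0 a_1$ and $b'(0) = b_0(b_1+b_2)$, yields $a_0 b_0 b_1 b_2 \Le a_0 a_1 b_0(b_1+b_2)$, i.e.\ $b_1 b_2 \Le a_1(b_1+b_2)$; dividing by $a_1 b_1 b_2$ gives precisely $\frac{1}{a_1} \Le \frac{1}{b_1} + \frac{1}{b_2}$.

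\emph{Expected difficulty.} The sufficiency half is essentially automatic once the reduction to Theorem~\ref{thm-bi-deg-2-1}(i) is noticed. The one point that requires genuine care is re-running the minimality step of Corollary~\ref{main-thm-coro} in the present setting, where $a$ and $b$ have different degrees and the partial fraction of $\frac{b}{a}$ carries a nonconstant (here linear) polynomial part; this is bookkeeping rather than a new idea — the polynomial part merely contributes a harmless point-mass factor — but it is where the argument must be verified line by line.
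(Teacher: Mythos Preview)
Your proposal is correct. The two halves relate to the paper's own arguments as follows.

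\textbf{Sufficiency.} Your primary route --- perturbing $a(x)=a_0(x+a_1)$ to $a_A(x)=\tfrac{a_0}{A}(x+a_1)(x+A)$ and invoking Theorem~\ref{thm-bi-deg-2-1}(i), then passing to the pointwise limit $A\to\infty$ --- is genuinely different from what the paper does, and arguably slicker: it avoids any fresh computation by reducing directly to the equal-degree case already handled. The paper instead argues directly (essentially your parenthetical alternative): it treats the boundary cases $a_1\in\{b_1,b_2\}$ by factoring and Theorem~\ref{deg-1-1}, and for $b_1<a_1<b_2$ writes out the partial fraction $\tfrac{b}{a}=c_0(m+b_1+b_2-a_1)+\tfrac{c_0(b_1-a_1)(b_2-a_1)}{m+a_1}$, observes the residue is negative, and then runs the Proof~II machinery together with Lemma~\ref{continuum}. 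Your limit trick buys brevity; the paper's computation buys an explicit integral representation, which it then reuses in the converse.

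\textbf{Necessity.} Both you and the paper pass through minimality, then \cite[Proposition~6]{Athavale2001}, then Lemma~\ref{lem-necessary} evaluated at $x=0$. The difference is in how minimality is verified. You re-run the Corollary~\ref{main-thm-coro} argument, noting that the linear polynomial part of $\tfrac{b}{a}$ contributes only a factor $\delta_{t^{c_0}}$ (which has positive total mass and does not charge $\{0\}$), so Lemma~\ref{convolution-mr} still applies to the three-fold convolution. The paper instead writes out the integrand, performs the change of variable $s_1=st^{c_0}$ to exhibit the representing measure as a weighted Lebesgue measure, and concludes minimality from that. Your route is more structural and recycles existing lemmas; the paper's is more concrete and yields the explicit density. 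Either is fine.
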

\begin{proof} Assume that $b_1 \Le a_1 \Le b_2.$ If $b_1 = a_1,$ then 
\beqn
\frac{1}{b(m)+a(m)n}=\frac{1}{m+a_1} \frac{1}{b_0m + a_0n + b_0b_2}, \quad m, n \in \mathbb Z_+,
\eeqn
and hence the desired conclusion in this case follows from Theorem~\ref{deg-1-1} and \cite[Lemma~8.2.1(v)]{BCR1984}. The same argument shows that $\big\{ \frac{1}{b(m)+a(m)n}\big\}_{m,n \in \mathbb{Z}_+}$ is joint completely monotone if $a_1=b_2.$ 

To complete the proof of the necessity part, 
let $c_0 = \frac{b_0}{a_0}$ and note that 
\beqn
\frac{b(x)}{a(x)} &=&  c_0 \frac{(x+b_1)(x+b_2)}{(x+a_1)} \\
&=& c_0(x+b_1+b_2-a_1) + c_0 \frac{(b_1-a_1)(b_2-a_1)}{x+a_1}, \quad x \in \mathbb R_+.
\eeqn
It follows that 
\beq \label{expree-t-b-by-a}
t^{\frac{b(m)}{a(m)}}=t^{c_0m} t^{c_0(b_1+b_2-a_1)}t^{c_0\frac{(b_1-a_1)(b_2-a_1)}{m+a_1}}, \quad m \in \mathbb{Z}_+.
\eeq
Now assume that $b_1< a_1< b_2$ and note that $(b_1-a_1)(b_2-a_1)<0.$
One may now argue as in the proof II of Theorem~\ref{main-thm}$(a)$ (see \eqref{each-t-int}-\eqref{form-w-j}) to show that for every $t \in (0, 1),$
$\big\{t^{c_0\frac{(b_1-a_1)(b_2-a_1)}{m+a_1}}\big\}_{m \Ge 0}$ is a 
Hausdorff moment sequence. Since $\{t^{c_0m}\}_{m \Ge 0}$ is a Hausdorff moment sequence for every $t \in (0, 1),$
 Lemma~\ref{continuum} (see \eqref{uno-5}) together with \eqref{expree-t-b-by-a} completes the proof of the sufficiency part. 

Let $c_1=c_0(b_1-a_1)(b_2-a_1).$ Arguing as in \eqref{bidegree-second-case}, we conclude that for $t \in (0,1),$ we have
\beqn
 \frac{t^{\frac{c_1}{(m+a_1)}}}{m+a_1} &=& \sum_{l=0}^{\infty}\frac{(c_1 \log t)^{l}}{{l!} (m+a_1)^{l+1}} \\
	&\overset{\eqref{EQN2}}=& 
	 \sum_{l=1}^{\infty} \frac{ (c_1 \log t)^l }{(l!)^2}  \int_{[0, 1]} (-\log {s})^{l}s^{a_1-1+m}ds. 
\eeqn
This combined with \eqref{expree-t-b-by-a} yields 
\beqn
 \frac{t^{\frac{b(m)}{a(m)}}}{a(m)}
= t^{c_0m} t^{c_0(b_1+b_2-a_1)} \sum_{l=1}^{\infty} \frac{ (c_1 \log t)^l }{a_0(l!)^2}  \int_{[0, 1]} (-\log {s})^{l}s^{a_1-1+m}ds, \\
m \in \mathbb{Z}_+, ~t \in (0, 1).
\eeqn
Thus, for some integrable function $w(s, t)$ (by an application of the dominated convergence theorem), we have 
\beqn
 \frac{t^{\frac{b(m)}{a(m)}}}{a(m)}
=\int_0^1 t^{c_0m}s^m w(s,t)ds = \int_0^1 (t^{c_0}s)^m w(s,t)ds, \quad m \in \mathbb Z_+, ~t \in (0, 1).
\eeqn 
Now using change of variable $st^{c_0}=s_1$ and applying \eqref{EQN4}, we may conclude that the representing measure of $\big\{ \frac{1}{b(m)+a(m)n}\big\}_{m,n \in \mathbb{Z}_+}$ is a weighted Lebesgue measure. 
One may now argue as in the proof of Lemma~\ref{minimal-implies-JCM} to see that $\frac{1}{b(x)+a(x)y}$  is a completely monotone function. The desired conclusion now follows from Lemma~\ref{lem-necessary} by letting $x=0$ in \eqref{necessary-c}. 
\end{proof}

\section{The Cauchy dual subnormality problem for toral $3$-isometric shifts}

Let $n$ be a positive integer and let $H$ be a complex Hilbert space. We say that $T$ is {\it commuting $n$-tuple on $H$} if $T_1, \ldots, T_n$ are bounded linear operators on $H$ such that $T_i T_j = T_j T_i$ for every $1 \Le i \neq j \Le n.$
Let $m$ be a positive integer.
Following \cite{Ag1990, Athavale2001, R1988}, we say that a commuting $n$-tuple $T$ is a {\it toral $m$-isometry} if 
\beqn 
\sum_{\overset{\alpha \in \mathbb Z^n_+}{0 \Le \alpha \Le \beta}} (-1)^{|\alpha|} \binom{\beta}{\alpha}T^{*\alpha}T^{\alpha} = 0, \quad \beta \in \mathbb Z^n_+, ~|\beta|=m,
\eeqn
where $T^{\alpha}$ stands for the bounded linear operator $\prod_{j=1}^nT^{\alpha_j}_j$ and $T^{*\alpha}$ denotes the Hilbert space adjoint of $T^{\alpha}.$ 
\begin{definition} \label{def-sep-m-iso}
We say that a commuting $n$-tuple $T$ is a {\it separate $m$-isometry} if $T_1, \ldots, T_n$ are $m$-isometries. 
\end{definition}

\begin{remark} For any commuting $n$-tuple $T,$ 
\beq 
\label{diff-op-m-iso}
\triangle^{\beta}(T^{*\gamma}T^{\gamma})\big|_{\gamma =0} = \sum_{\overset{\alpha \in \mathbb Z^n_+}{0 \Le \alpha \Le \beta}} (-1)^{|\alpha| + |\beta |} \binom{\beta}{\alpha}T^{*\alpha}T^{\alpha}, \quad \beta \in \mathbb Z^n_+.
\eeq
The verification of this identity is similar to that of \cite[Eqn~(2.1)]{JJS2020}. 
Clearly, a toral $m$-isometry is a separate $m$-isometry. Indeed, for any $j=1, \ldots, n,$ letting $\beta =m\varepsilon_j$ in \eqref{diff-op-m-iso} shows that $T_j$ is an $m$-isometry. In general, a separate $m$-isometry is not a toral $m$-isometry (see Remark~\ref{sep-toral-ex} below). 
 \hfill $\diamondsuit$
\end{remark}

The following is a well known fact for a single operator (see \cite[Equation~(1.3)]{AS1995}, \cite[Corollary~3.5]{JJS2020}).
\begin{theorem} \label{thm-char-m-iso}
 A commuting $n$-tuple $T=(T_1, \ldots, T_n)$ on $H$ is a toral $m$-isometry if and only if 
 \beqn 
 T^{*\alpha}T^{\alpha} = \sum_{k=0}^{m-1} \sum_{\overset{\beta \in \mathbb Z^n_+}{|\beta|=k}} \frac{\triangle^\beta(T^{*\alpha}T^{\alpha})\big|_{\alpha =0} }{\beta!} \,(\alpha)_\beta, \quad \alpha \in \mathbb Z^n_+.
 \eeqn
 \end{theorem}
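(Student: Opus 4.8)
The plan is to reduce the multivariable statement to the identity \eqref{diff-op-m-iso} together with the combinatorial fact that a net which is annihilated by all forward differences $\triangle^\beta$ with $|\beta| = m$ is necessarily given by a polynomial of total degree at most $m-1$ in $\alpha$, written in the ``Newton forward-difference'' basis. Concretely, fix $x \in H$ and consider the scalar net $a_\alpha := \langle T^{*\alpha}T^\alpha x, x\rangle = \|T^\alpha x\|^2$, $\alpha \in \mathbb Z^n_+$. First I would observe that, by \eqref{diff-op-m-iso}, $T$ being a toral $m$-isometry is equivalent to $\triangle^\beta a_\alpha\big|_{\alpha=0}\cdot(\text{sign})=0$ for all $\beta$ with $|\beta|=m$ and, since the defining relations are translation-invariant in $\alpha$ (replace $T^\alpha$ by $T^{\alpha+\gamma}$, which amounts to testing the same operator identity on vectors $T^\gamma x$), in fact $\triangle^\beta a_\alpha = 0$ for all $\alpha\in\mathbb Z^n_+$ and all $\beta$ with $|\beta|=m$.

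The combinatorial core is then the multivariable Newton expansion: any net $\{a_\alpha\}_{\alpha\in\mathbb Z^n_+}$ satisfies
\beqn
a_\alpha = \sum_{\beta\in\mathbb Z^n_+} \frac{\triangle^\beta a_\gamma\big|_{\gamma=0}}{\beta!}\,(\alpha)_\beta,
\eeqn
where the sum is finite for each fixed $\alpha$ because $(\alpha)_\beta = 0$ unless $\beta\le\alpha$; this is the standard one-variable Newton forward-difference formula applied successively in each coordinate, using that $\triangle_1,\ldots,\triangle_n$ commute. I would either cite this as the elementary fact it is or give the one-line induction on $n$. Now, if $\triangle^\beta a_\alpha = 0$ whenever $|\beta|=m$, then $\triangle^\beta a_\gamma\big|_{\gamma=0} = 0$ whenever $|\beta|\ge m$ (apply a further difference to a vanishing quantity), so the expansion truncates to $|\beta|\le m-1$, i.e. to $k = 0,\ldots,m-1$ with $|\beta| = k$. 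Applying this with $a_\alpha = \langle T^{*\alpha}T^\alpha x, x\rangle$ and noting that all the coefficients $\triangle^\beta(T^{*\alpha}T^\alpha)\big|_{\alpha=0}$ are the relevant self-adjoint operators, the asserted operator identity follows after polarization (each $T^{*\alpha}T^\alpha$ is positive, hence determined by its quadratic form, and the coefficient operators are self-adjoint, hence likewise determined). Conversely, if the displayed identity holds, then $T^{*\alpha}T^\alpha$ is, as a function of $\alpha$, a polynomial of total degree $\le m-1$ in the falling-factorial basis, so $\triangle^\beta(T^{*\alpha}T^\alpha) = 0$ for $|\beta|=m$; specializing to $\beta = m\varepsilon_j$ would not suffice, but the full vanishing for all $|\beta|=m$ is exactly the toral $m$-isometry condition via \eqref{diff-op-m-iso}.

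The step I expect to require the most care is the bookkeeping in the converse direction: one must check that the finite-sum expression on the right-hand side is genuinely annihilated by \emph{every} $\triangle^\beta$ with $|\beta|=m$, not merely by the ``pure'' differences $\triangle_j^m$. This is where commutativity of the $\triangle_j$ and the product structure $(\alpha)_\beta = \prod_j (\alpha_j)_{\beta_j}$ are used: one computes $\triangle^\beta (\alpha)_\gamma = \prod_j \triangle_j^{\beta_j}(\alpha_j)_{\gamma_j}$ and invokes the one-variable fact that $\triangle_j^{\beta_j}(\alpha_j)_{\gamma_j} = 0$ once $\beta_j > \gamma_j$, together with $\sum_j \gamma_j = k \le m-1 < m = \sum_j \beta_j$, which forces $\beta_j > \gamma_j$ for at least one $j$. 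This is precisely the content of the recorded fact \eqref{fact-m-iso}, so in fact the converse is immediate from \eqref{fact-m-iso} and \eqref{diff-op-m-iso}, and the forward direction is the Newton expansion; I would organize the write-up around those two citations.
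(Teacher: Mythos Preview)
Your proposal is correct and follows essentially the same route as the paper: reduce to the scalar net $\langle T^{*\alpha}T^{\alpha}h,h\rangle$, invoke the multivariable Newton forward-difference expansion (the paper cites this as \cite[Remark~2, Equation~(G)]{Athavale2001}), truncate using the vanishing of $\triangle^\beta$ for $|\beta|\ge m$, and recover the operator identity from the quadratic-form identity; the converse is handled exactly as you describe, via \eqref{fact-m-iso} and \eqref{diff-op-m-iso}. Your write-up is in fact slightly more detailed than the paper's on the translation-invariance step (the paper records the equivalence \eqref{equi-m-iso} without spelling out the conjugation by $T^{*\gamma},T^{\gamma}$) and on why the converse works, but the structure is identical.
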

 \begin{proof} Define $\mf m(\alpha)= T^{*\alpha}T^{\alpha},$ $\alpha \in \mathbb Z^n_+.$ By \eqref{diff-op-m-iso}, 
 \beq \label{equi-m-iso}
\mbox{$T$ is a toral $m$-isometry $\Leftrightarrow$ $\triangle^{\beta}\mf m=0,$ $\beta \in \mathbb Z^n_+,$ $|\beta| \Ge m.$} 
\eeq
 By the Newton's Interpolation Formula in several variables (see \cite[Remark~2, Equation~(G)]{Athavale2001}),
 for every $h \in H,$
\beqn
 \inp{\mf m(\alpha)h}{h} = \sum_{k=0}^{\infty} \sum_{\overset{\beta \in \mathbb Z^n_+}{|\beta|=k}} \frac{\triangle^\beta(\inp{\mf m(\alpha)h}{h})|_{\alpha =0} }{\beta!} \,(\alpha)_\beta, \quad \alpha \in \mathbb Z^n_+.
 \eeqn
 This combined with
 \beqn
\triangle^\beta \inp{\mf m(\cdot)h}{h}= \inp{\triangle^\beta \mf m(\cdot)h}{h}, \quad \beta \in \mathbb Z^n_+, ~h \in H, 
\eeqn
yields the following identity$:$ 
\beqn
 \inp{\mf m(\alpha)h}{h} = \sum_{k=0}^{\infty} \sum_{\overset{\beta \in \mathbb Z^n_+}{|\beta|=k}} \frac{\inp{\triangle^\beta (\mf m(\alpha))h}{h}|_{\alpha =0} }{\beta!} \, (\alpha)_\beta, \quad \alpha \in \mathbb Z^n_+, ~h \in H.
 \eeqn
This together with \eqref{equi-m-iso}
gives the necessity part. 
The sufficiency part follows from \eqref{fact-m-iso} and \eqref{equi-m-iso}. 
 \end{proof}
 
 The following, a special case of \cite[Proposition~4.1(iii)]{CC2012}, is immediate from Theorem~\ref{thm-char-m-iso}.
\begin{corollary} \label{last-coro}
A commuting $n$-tuple $T=(T_1, \ldots, T_n)$ on $H$ is a toral $2$-isometry if and only if 
\beqn
T^{*\alpha}T^{\alpha} = I + \sum_{j=1}^n \alpha_j (T^*_jT_j-I), \quad \alpha=(\alpha_1, \ldots, \alpha_n) \in \mathbb Z^n_+.
\eeqn
\end{corollary}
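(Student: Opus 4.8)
The final statement to prove is Corollary~\ref{last-coro}: a commuting $n$-tuple $T$ is a toral $2$-isometry if and only if $T^{*\alpha}T^{\alpha} = I + \sum_{j=1}^n \alpha_j(T^*_jT_j - I)$.

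The plan is to specialize Theorem~\ref{thm-char-m-iso} to $m=2$. Setting $m=2$ in the characterization, the double sum over $k=0,\ldots,m-1$ collapses to $k=0$ and $k=1$. For $k=0$, the only multi-index $\beta\in\mathbb Z^n_+$ with $|\beta|=0$ is $\beta=0$, for which $\triangle^0(T^{*\alpha}T^{\alpha})|_{\alpha=0} = T^{*0}T^{0} = I$, $\beta! = 1$, and $(\alpha)_0 = 1$; this contributes the term $I$. For $k=1$, the multi-indices with $|\beta|=1$ are exactly $\beta = \varepsilon_j$ for $j=1,\ldots,n$; here $\beta! = 1$ and $(\alpha)_{\varepsilon_j} = \alpha_j$, while $\triangle^{\varepsilon_j}(T^{*\alpha}T^{\alpha})|_{\alpha=0} = T^{*\varepsilon_j}T^{\varepsilon_j} - T^{*0}T^0 = T^*_jT_j - I$ directly from the definition of the forward difference operator $\triangle_j$. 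Summing these contributions gives precisely $I + \sum_{j=1}^n \alpha_j(T^*_jT_j - I)$, which is the asserted identity.

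So I would first invoke Theorem~\ref{thm-char-m-iso} with $m=2$, then carry out exactly the above bookkeeping: enumerate the multi-indices $\beta$ with $|\beta|\le 1$, evaluate $\beta!$ and $(\alpha)_\beta$ on each, and compute the first-order differences $\triangle^{\varepsilon_j}(T^{*\gamma}T^\gamma)|_{\gamma=0}$ from the definition. Conversely, if the displayed identity holds, it is of the form guaranteed by Theorem~\ref{thm-char-m-iso} for $m=2$, so $T$ is a toral $2$-isometry; alternatively one notes the right-hand side is a polynomial in $\alpha$ of multi-degree at most $(1,\ldots,1)$, so $\triangle^\beta$ annihilates it whenever $|\beta|\ge 2$ by \eqref{fact-m-iso}, and \eqref{equi-m-iso} then yields the toral $2$-isometry property.

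There is essentially no obstacle here: the result is a direct unwinding of Theorem~\ref{thm-char-m-iso}, and the only thing to be careful about is the indexing convention — making sure that $\triangle^{\varepsilon_j}(T^{*\gamma}T^\gamma)$ evaluated at $\gamma=0$ really is $T^*_jT_j - I$ and not, say, a signed or reordered variant. A brief check against \eqref{diff-op-m-iso} with $\beta = \varepsilon_j$ confirms this, since the sum on the right there runs over $0\le\alpha\le\varepsilon_j$, i.e. $\alpha\in\{0,\varepsilon_j\}$, giving $(-1)^{0+1}\binom{\varepsilon_j}{0}I + (-1)^{1+1}\binom{\varepsilon_j}{\varepsilon_j}T^*_jT_j = T^*_jT_j - I$. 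With that pinned down, the corollary follows immediately.
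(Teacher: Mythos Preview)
Your proposal is correct and matches the paper's approach exactly: the paper simply states the corollary is ``immediate from Theorem~\ref{thm-char-m-iso},'' and you have carried out precisely that specialization to $m=2$. One small caution on your alternative converse: the right-hand side has multi-degree $(1,\ldots,1)$, so $|\gamma|=n$, and \eqref{fact-m-iso} alone does not give $\triangle^\beta = 0$ for $|\beta|=2$ when $n\ge 2$; however, your primary converse (matching the form in Theorem~\ref{thm-char-m-iso}) is valid, so this does not affect the proof.
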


The next result characterizes separate $2$-isometries within the class of toral $m$-isometric pairs.
\begin{corollary} \label{coro-char-m-iso}
Let $m$ be an integer bigger than $1.$ A toral $m$-isometry pair  $T=(T_1, T_2)$ on  $H$ is a separate $2$-isometry if and only if 
\beq \label{conclusion-3-iso}
 T^{*\alpha}T^{\alpha} 
= 
I + \alpha_1 A + \alpha_2 \big(B + \alpha_1 C\big), \quad \alpha = (\alpha_1, \alpha_2) \in \mathbb Z^2_+. 
\eeq
where $A = T^*_1T_1-I,$ $B = T^*_2T_2-I$ and $C =T^{*}_1 B T_1 - B.$
If \eqref{conclusion-3-iso} holds, then $T$ is a toral $3$-isometry. 
\end{corollary}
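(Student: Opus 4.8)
The plan is to argue entirely with the operator net $\mf m(\alpha):=T^{*\alpha}T^{\alpha}$, $\alpha=(\alpha_1,\alpha_2)\in\mathbb Z^2_+$, using only $[T_1,T_2]=0$ (and hence $[T_1^*,T_2^*]=0$). The key preliminary step is the pair of rewritings
\[
\mf m(\alpha_1,\alpha_2)=T_2^{*\alpha_2}\big(T_1^{*\alpha_1}T_1^{\alpha_1}\big)T_2^{\alpha_2}=T_1^{*\alpha_1}\big(T_2^{*\alpha_2}T_2^{\alpha_2}\big)T_1^{\alpha_1},
\]
obtained by reordering $T_1^{*\alpha_1}$ past $T_2^{*\alpha_2}$ and $T_1^{\alpha_1}$ past $T_2^{\alpha_2}$ --- note this never uses that $T_1$ commutes with $T_2^*$. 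Since $T_2^{*\alpha_2}$ and $T_2^{\alpha_2}$ do not depend on $\alpha_1$, the first identity yields $\triangle_1^{\,j}\mf m(\alpha_1,\alpha_2)=T_2^{*\alpha_2}\big[\triangle_1^{\,j}(T_1^{*\alpha_1}T_1^{\alpha_1})\big]T_2^{\alpha_2}$ for all $j\Ge0$, and symmetrically $\triangle_2^{\,j}\mf m(\alpha_1,\alpha_2)=T_1^{*\alpha_1}\big[\triangle_2^{\,j}(T_2^{*\alpha_2}T_2^{\alpha_2})\big]T_1^{\alpha_1}$.

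For the forward implication I would assume $T$ is a separate $2$-isometry. Then $\triangle_1^2(T_1^{*\alpha_1}T_1^{\alpha_1})=0$ and $\triangle_2^2(T_2^{*\alpha_2}T_2^{\alpha_2})=0$ for all $\alpha_1,\alpha_2$, so the pass-through identities give $\triangle_1^2\mf m\equiv0$ and $\triangle_2^2\mf m\equiv0$ on $\mathbb Z^2_+$. Every $\beta$ with $|\beta|=3$ satisfies $\beta_1\Ge2$ or $\beta_2\Ge2$, and $\triangle_1,\triangle_2$ commute, so $\triangle^\beta\mf m=0$ whenever $|\beta|\Ge3$; by \eqref{equi-m-iso} this already establishes the last assertion, that $T$ is a toral $3$-isometry. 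Now Theorem~\ref{thm-char-m-iso} with $m=3$ gives
\[
\mf m(\alpha)=\mf m(0)+\triangle_1\mf m\big|_{0}\,\alpha_1+\triangle_2\mf m\big|_{0}\,\alpha_2+\tfrac12\triangle_1^2\mf m\big|_{0}\,(\alpha_1)_2+\triangle_1\triangle_2\mf m\big|_{0}\,\alpha_1\alpha_2+\tfrac12\triangle_2^2\mf m\big|_{0}\,(\alpha_2)_2,
\]
where the last two diagonal second-order terms vanish. It remains only to identify the coefficients: $\mf m(0)=I$, $\triangle_1\mf m|_{0}=T_1^*T_1-I=A$, $\triangle_2\mf m|_{0}=T_2^*T_2-I=B$, and, using $T_1T_2=T_2T_1$ to rewrite $(T_1T_2)^*(T_1T_2)$, $\triangle_1\triangle_2\mf m|_{0}=\mf m(1,1)-\mf m(1,0)-\mf m(0,1)+\mf m(0,0)=T_1^*T_2^*T_2T_1-T_1^*T_1-T_2^*T_2+I=T_1^*BT_1-B=C$. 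This is \eqref{conclusion-3-iso}.

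For the converse, assume \eqref{conclusion-3-iso}. Then $\alpha_1\mapsto\mf m(\alpha_1,\alpha_2)$ is affine, so $\triangle_1^2\mf m\equiv0$; restricting to $\alpha_2=0$ gives $\triangle_1^2(T_1^{*\alpha_1}T_1^{\alpha_1})=0$, and at $\alpha_1=0$ this reads $T_1^{*2}T_1^2-2T_1^*T_1+I=0$, i.e.\ $T_1$ is a $2$-isometry; by symmetry $T_2$ is a $2$-isometry, so $T$ is a separate $2$-isometry. Together with the forward implication --- and the observation that the final assertion was already obtained en route --- this completes everything; we note in passing that the standing toral $m$-isometry hypothesis is not actually needed in either implication, and for $m\Le3$ it is anyway subsumed by the conclusion.

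I do not anticipate a genuine obstacle. The single point requiring care is the second rewriting of $\mf m(\alpha_1,\alpha_2)$ above, where the operators must be reordered using $[T_1,T_2]=0$ and $[T_1^*,T_2^*]=0$ without ever asserting $[T_1,T_2^*]=0$; once this is in place, the argument reduces to forward differencing, the already-established Theorem~\ref{thm-char-m-iso}, and the routine identification of $A$, $B$, and $C$.
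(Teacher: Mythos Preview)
Your proof is correct. The ingredients are the same as the paper's --- the pass-through identities $\mf m(\alpha)=T_2^{*\alpha_2}(T_1^{*\alpha_1}T_1^{\alpha_1})T_2^{\alpha_2}=T_1^{*\alpha_1}(T_2^{*\alpha_2}T_2^{\alpha_2})T_1^{\alpha_1}$ together with Theorem~\ref{thm-char-m-iso} --- but the order in which they are deployed differs. The paper first invokes Theorem~\ref{thm-char-m-iso} with the \emph{given} $m$ (so the standing toral $m$-isometry hypothesis is genuinely used there), obtaining a Newton sum of length $m$, and then prunes it down to \eqref{conclusion-3-iso} by arguing that the coefficients $\triangle^\beta\mf m|_{0}$ with $\beta_1\Ge 2$ or $\beta_2\Ge 2$ vanish; this also forces a small case split $m=2$ versus $m\Ge 3$. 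You instead use the pass-through identities first to obtain $\triangle_1^2\mf m\equiv 0$ and $\triangle_2^2\mf m\equiv 0$ directly from the separate $2$-isometry assumption, conclude toral $3$-isometry outright via \eqref{equi-m-iso}, and only then apply Theorem~\ref{thm-char-m-iso} with $m=3$. Your route avoids the case split and, as you correctly observe, shows that the toral $m$-isometry hypothesis is not actually needed for either implication --- a mild but genuine strengthening of the corollary as stated.
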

\begin{proof} If $m=2,$ then the desired equivalence follows from Corollary~\ref{last-coro} (the identity \eqref{conclusion-3-iso} holds with $C=0$). Hence, we may assume that $m \Ge 3.$ Suppose that $T$ is a separate $2$-isometry. 
Define $\mf m(\alpha)=T^{*\alpha}T^{\alpha},$ $\alpha \in \mathbb Z^2_+.$
Since $T_2$ is a $2$-isometry, by \eqref{equi-m-iso} (applied to $m=2$ and $n=1$), $\triangle^j_2 (T^{*\alpha_2}_2T^{\alpha_2}_2)=0$ for every $j \Ge 2.$ 
This combined with Theorem~\ref{thm-char-m-iso} yields 
 \beq \label{i-step}
 \mf m(\alpha) &=& \sum_{j=0}^{m-1} \sum_{\overset{\beta \in \mathbb Z^2_+}{\beta_1 + \beta_2=j}} \frac{\triangle^{\beta_1}_1 \triangle^{\beta_2}_2(\mf m(\alpha))|_{\alpha =0} }{\beta_1! \beta_2!}\, (\alpha_1)_{\beta_1} (\alpha_2)_{\beta_2}\\ \notag
 &=& \sum_{j=0}^{m-1}  \frac{\triangle^{j}_1(\mf m(\alpha))|_{\alpha =0} }{j !} \, (\alpha_1)_{j}  + \sum_{j=1}^{m-1}   \frac{\triangle^{j-1}_1 \triangle_2(\mf m(\alpha))|_{\alpha =0} }{(j-1) !}\, (\alpha_1)_{j-1}\, \alpha_2
 \eeq
 for every $\alpha \in \mathbb Z^2_+.$ However, since $T_1$ is a $2$-isometry, once again by \eqref{equi-m-iso},
 \beqn
 \triangle^{j}_1(\mf m(\alpha))|_{\alpha =0} &=& \triangle^{j}_1(T^{*\alpha_1}_1T^{\alpha_1}_1)|_{\alpha_1 =0} = 0, \quad j \Ge 2, \\ \triangle^{j-1}_1 \triangle_2(\mf m(\alpha))|_{\alpha =0} &=& \triangle^{j-1}_1(T^{*\alpha_1}_1(T^*_2T_2-I)T^{\alpha_1}_1)|_{\alpha_1 =0} = 0, \quad j \Ge 3.
 \eeqn
 This combined with \eqref{i-step} yields
 \beqn 
\mf m(\alpha) &=& \sum_{j=0}^{1}  \frac{\triangle^{j}_1(T^{*\alpha_1}_1T^{\alpha_1}_1)|_{\alpha_1 = 0}}{j !} \,(\alpha_1)_{j}  \\ &+& \alpha_2 \sum_{j=0}^{1}   \frac{\triangle^{j}_1(T^{*\alpha_1}_1(T^*_2T_2-I)T^{\alpha_1}_1)|_{\alpha_1 = 0} }{j !}\, (\alpha_1)_{j} \\
&=& I +  \alpha_1 \triangle_1(T^{*\alpha_1}_1T^{\alpha_1}_1)|_{\alpha_1 = 0} + \alpha_2 ( B + \alpha_1 \triangle_1(T^{*\alpha_1}_1 B T^{\alpha_1}_1)|_{\alpha_1 = 0} ).
\eeqn
This gives the necessity part. The sufficiency part follows from applications of Corollary~\ref{last-coro} separately to $T_1$ and $T_2.$ The remaining part now follows from Theorem~\ref{thm-char-m-iso}. 
\end{proof}

Let ${\bf w} = \big\{{w^{(j)}_\alpha} : j=1, \ldots, n, ~\alpha \in {\mathbb Z}^n_+\big\}$ be a set of nonzero complex numbers. 
Let $\mathscr H$ be a complex separable Hilbert space with orthonormal basis $\mathscr E = \{e_\alpha\}_{\alpha \in \mathbb Z^n_+}.$
A {\it weighted $n$-shift} $\,\mathscr W = (\mathscr W_1, \ldots, \mathscr W_n)$ with respect
to $\mathscr E$ is defined by \beqn \mathscr W_j e_\alpha \mathrel{\mathop:}=
w^{(j)}_\alpha e_{\alpha + \varepsilon_j}, \quad j=1, \ldots, n, \eeqn where
$\varepsilon_j$ is the $m$-tuple with $1$ in the $j$th place and
  zeros elsewhere. Clearly, $\mathscr W_1, \ldots, \mathscr W_n$ extend to densely defined linear operators on the linear span of $\{e_\alpha\}_{\alpha \in \mathbb Z^n_+}.$ Note that $\mathscr W_1, \ldots, \mathscr W_n$ extend boundedly to $\mathscr H$  if and only if $\sup_{\alpha \in \mathbb Z^n_+}|w^{(j)}_\alpha| < \infty$ for every $j=1, \ldots, n.$ It is easy to see that for $i, j =1, \ldots, n,$ 
\beq \label{commuting}
\mathscr W_i\mathscr W_j = \mathscr W_j \mathscr W_i~ \Longleftrightarrow w^{(i)}_\alpha w^{(j)}_{\alpha
    + \varepsilon_i}=w^{(j)}_\alpha w^{(i)}_{\alpha + \varepsilon_j}, \quad \alpha \in {\mathbb Z}^n_+.
    \eeq
  The reader is referred to \cite{JL1979} for the basic theory of weighted multishifts. 
  
 In what follows, we always assume that {\it the weight multi-sequence ${\bf w}$ of $\mathscr W$ consists of
positive numbers, $\mathscr W$ extends boundedly to $\mathscr H$ and that  $\mathscr W$ is a commuting $n$-tuple.} 
 We indicate the weighted $n$-shift  $\mathscr W$
  with weight multi-sequence ${\bf w}$ by $\mathscr W : \{{w^{(j)}_\alpha}\}.$

 \begin{proposition} \label{m-iso-wt-char}
For a weighted $2$-shift $\mathscr W : \{w^{(j)}_\alpha\},$ the following statements are valid$:$
 \begin{enumerate}
\item[$\mathrm{(i)}$] $\mathscr W$ is a toral $m$-isometry if and only if 
 \beq \label{wt-2-iso}
 \|\mathscr W^\alpha e_0\|^2  = \sum_{k=0}^{m-1} \sum_{\overset{\beta \in \mathbb Z^2_+}{|\beta|=k}} \frac{\triangle^\beta(\|\mathscr W^{\alpha}e_0\|^2)|_{\alpha =0} }{\beta!} \,(\alpha)_\beta, \quad \alpha=(\alpha_1, \alpha_2) \in \mathbb Z^2_+,
 \eeq
 \item[$\mathrm{(ii)}$] 
 if \eqref{wt-2-iso} holds, then $\mathscr W$ is a separate $2$-isometry if and only if 
 \beq \label{wt-3-iso-new}
 \|\mathscr W^\alpha e_0\|^2 
= 
1 + \alpha_1 b + 
 \alpha_2( c + \alpha_1 d), \quad \alpha=(\alpha_1, \alpha_2) \in \mathbb Z^2_+,
\eeq
where $b, c, d$ are given by 
\beq
\label{value-d}
\left.
 \begin{array}{ccc}
 && b = (w^{(1)}_0)^2-1, ~c=(w^{(2)}_0)^2-1, \\
&& d=1- (w^{(1)}_0)^2 - (w^{(2)}_0)^2 + (w^{(1)}_0)^2 (w^{(2)}_{\varepsilon_1})^2.
 \end{array}
\right\} \eeq
If \eqref{wt-3-iso-new} holds, then $\mathscr W$ is a toral $3$-isometry. 
\end{enumerate} 
 \end{proposition}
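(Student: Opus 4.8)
The plan is to push everything through one structural fact about a weighted $2$-shift with positive weights: each operator that occurs below --- $\mathscr W^{*\alpha}\mathscr W^{\alpha}$, $A=\mathscr W^*_1\mathscr W_1-I$, $B=\mathscr W^*_2\mathscr W_2-I$ and $C=\mathscr W^*_1B\mathscr W_1-B$ --- is diagonal with respect to $\mathscr E$, and its $e_\gamma$-diagonal entry is an explicit function of the single scalar net $f(\alpha):=\|\mathscr W^{\alpha}e_0\|^2$. First I would record that commutativity \eqref{commuting} yields $\mathscr W^{\alpha}\mathscr W^{\gamma}=\mathscr W^{\alpha+\gamma}$, so that, since $\mathscr W^{\gamma}e_0=\|\mathscr W^{\gamma}e_0\|\,e_\gamma$,
\[
\|\mathscr W^{\alpha}e_\gamma\|^2=\frac{f(\alpha+\gamma)}{f(\gamma)},\qquad \alpha,\gamma\in\mathbb Z^2_+ .
\]
As forward differences commute with translation, the $e_\gamma$-entry of $\triangle^{\beta}(\mathscr W^{*\alpha}\mathscr W^{\alpha})$ equals $(\triangle^{\beta}f)(\alpha+\gamma)/f(\gamma)$. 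This gives two \emph{reduction principles}: an operator identity of the form ``$\triangle^{\beta}(\mathscr W^{*\alpha}\mathscr W^{\alpha})=0$ for $\beta$ in a prescribed set'' holds iff $\triangle^{\beta}f\equiv0$ for all such $\beta$; and a polynomial-in-$\alpha$ formula for the diagonal operator $\mathscr W^{*\alpha}\mathscr W^{\alpha}$ is equivalent, by reading off diagonal entries, to the same formula for the scalar $f$. Everything then becomes a statement about $f$.

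For part~(i) I would apply Theorem~\ref{thm-char-m-iso} to $T=\mathscr W$ with $n=2$: $\mathscr W$ is a toral $m$-isometry iff $\mathscr W^{*\alpha}\mathscr W^{\alpha}$ equals its order-$(m-1)$ truncated Newton expansion, and reading off the $e_0$-diagonal entry of that operator identity is exactly \eqref{wt-2-iso}. For the converse, if \eqref{wt-2-iso} holds then $f$ is a polynomial of total degree at most $m-1$, so $\triangle^{\beta}f\equiv0$ for $|\beta|=m$ by \eqref{fact-m-iso}, whence $\triangle^{\beta}(\mathscr W^{*\alpha}\mathscr W^{\alpha})=0$ for $|\beta|=m$ by the reduction, and \eqref{equi-m-iso} delivers the toral $m$-isometry property.

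For part~(ii) I would assume \eqref{wt-2-iso} and take $m\Ge2$ (the case $m=1$ is trivial: all weights equal $1$, $f\equiv1$, and $b=c=d=0$). By (i), $\mathscr W$ is a toral $m$-isometry, so Corollary~\ref{coro-char-m-iso} applies. If $\mathscr W$ is a separate $2$-isometry, then \eqref{conclusion-3-iso} holds and, evaluated at $e_0$, becomes $f(\alpha)=1+\alpha_1\inp{Ae_0}{e_0}+\alpha_2(\inp{Be_0}{e_0}+\alpha_1\inp{Ce_0}{e_0})$; a short computation using $\mathscr W_1e_0=w^{(1)}_0e_{\varepsilon_1}$ and $\mathscr W_2e_0=w^{(2)}_0e_{\varepsilon_2}$ identifies the three coefficients with the $b,c,d$ of \eqref{value-d}, in particular $\inp{Ce_0}{e_0}=(w^{(1)}_0)^2\big((w^{(2)}_{\varepsilon_1})^2-1\big)-\big((w^{(2)}_0)^2-1\big)$, so \eqref{wt-3-iso-new} holds. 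Conversely, if \eqref{wt-3-iso-new} holds then $\alpha_1\mapsto f(\alpha)$ is affine for each fixed $\alpha_2$, so by the translation identity $\|\mathscr W^{m\varepsilon_1}e_{(0,\alpha_2)}\|^2=f(m,\alpha_2)/f(0,\alpha_2)$ is affine in $m$; since $\mathscr W_1$ preserves each slice $\{e_{(m,\alpha_2)}\}_{m\Ge0}$ and acts there as a unilateral weighted shift, $\mathscr W_1$ is a $2$-isometry, and symmetrically so is $\mathscr W_2$ --- that is, $\mathscr W$ is a separate $2$-isometry. Finally, \eqref{wt-3-iso-new} presents $f$ as a polynomial in $\alpha$ of total degree at most $2$, so $\triangle^{\beta}f\equiv0$ for $|\beta|=3$, and the reduction together with \eqref{equi-m-iso} shows $\mathscr W$ is a toral $3$-isometry.

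The main obstacle --- really the only non-mechanical step --- is the first paragraph: one must verify carefully that commutativity of $\mathscr W$ lets the ``$e_0$-level'' identities propagate to genuine operator identities on $\mathscr H$, and conversely that those operator identities are detected on $e_0$. Once this is stated cleanly, part~(i) is a transcription of Theorem~\ref{thm-char-m-iso}, and part~(ii) reduces to the single inner-product computation above plus the elementary remark that a bi-affine $f$ forces $2$-isometric coordinates; I do not expect further difficulty.
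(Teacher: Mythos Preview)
Your proposal is correct and follows essentially the same approach as the paper: both use Theorem~\ref{thm-char-m-iso} for the necessity in (i), the translation identity $\|\mathscr W^{\alpha}e_\gamma\|^2=f(\alpha+\gamma)/f(\gamma)$ (the paper's \eqref{W-beta}) together with \eqref{fact-m-iso} to pass from the $e_0$-level scalar formula back to the operator identity for sufficiency, and Corollary~\ref{coro-char-m-iso} for part~(ii). Your systematic formulation of the ``reduction principles'' makes explicit what the paper leaves implicit, and your direct slice-by-slice argument for the converse in (ii) is a minor (and equally valid) variant of the paper's terse appeal to \eqref{W-beta} and Corollary~\ref{coro-char-m-iso}.
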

 \begin{proof} 
 (i) The necessity part follows from Theorem~\ref{thm-char-m-iso}. To see the sufficiency part, assume that \eqref{wt-2-iso} holds. 
Note that for any $\beta \in \mathbb Z^2_+,$ there exists a positive scalar $m(\beta)$ such that
 \beq \label{W-beta}
 \mathscr W^{\beta} e_0= m(\beta)e_\beta.
 \eeq
 It now follows that for some real scalars $b_{\beta, \delta}$ 
 \beqn
 \|\mathscr W^\alpha e_\beta\|^2 = \frac{1}{m(\beta)^2} \|\mathscr W^{\alpha + \beta} e_0\|^2 \overset{\eqref{wt-2-iso}}=  \sum_{k=0}^{m-1} \sum_{\overset{\delta \in \mathbb Z^2_+}{|\delta|=k}} b_{\beta, \delta}\, (\alpha + \beta)_\delta, \quad \alpha, \beta \in \mathbb Z^2_+.
 \eeqn
 This combined with \eqref{fact-m-iso} yields
 \beqn
\sum_{\overset{\alpha \in \mathbb Z^2_+}{0 \Le \alpha \Le \gamma}} (-1)^{|\alpha|}\binom{\gamma}{\alpha}\|\mathscr W^{\alpha}e_\beta\|^2 = 0, \quad \beta \in \mathbb Z^2_+,~ \gamma \in \mathbb Z^2_+, ~|\gamma|=m.
\eeqn
Since $\{\mathscr W^\alpha e_\beta\}_{\beta \in \mathbb Z^2_+}$ is an orthogonal set for every $\alpha \in \mathbb Z^2_+,$ we conclude that 
\beqn
\sum_{\overset{\alpha \in \mathbb Z^2_+}{0 \Le \alpha \Le \gamma}} (-1)^{|\alpha|}\binom{\gamma}{\alpha}\|\mathscr W^{\alpha}h\|^2 = 0, \quad h \in H,~ \gamma \in \mathbb Z^2_+, ~|\gamma|=m.
\eeqn
This shows that $\mathscr W$ is a toral $m$-isometry. 

(ii) Assume that \eqref{wt-2-iso} holds. Similar to the verification of (i), this may be deduced from \eqref{W-beta} and Corollary~\ref{coro-char-m-iso}. 
 \end{proof}
 \begin{remark} 
 \label{sep-toral-ex}
 Note that $\mathscr W$ is a toral $2$-isometry if and only if there exist unique nonnegative numbers $b, c$ such that
 \beqn 
 \|\mathscr W^\alpha e_0\|^2  = 1 + b \alpha_1 + c \alpha_2, \quad \alpha=(\alpha_1, \alpha_2) \in \mathbb Z^2_+.
 \eeqn
Moreover, $b=(w^{(1)}_0)^2-1$ and $c=(w^{(2)}_0)^2-1.$
These observations were implicitly recorded in \cite[Remark~3]{AS1999}.
Thus, for any $\alpha \in \mathbb Z^2_+,$
 \beqn
 w^{(1)}_\alpha = \frac{\|\mathscr W^{\alpha + \varepsilon_1}e_0\|}{\|\mathscr W^{\alpha}e_0\|}=\sqrt{\frac{1 + ((w^{(1)}_0)^2-1) (\alpha_1+1) +  ((w^{(1)}_0)^2-1) \alpha_2}{1 + ((w^{(1)}_0)^2-1) \alpha_1 +  ((w^{(1)}_0)^2-1) \alpha_2}}.
 \eeqn
 Similarly, one can see that
 \beqn
 w^{(2)}_\alpha = \sqrt{\frac{1 + ((w^{(1)}_0)^2-1) \alpha_1 +  ((w^{(1)}_0)^2-1) (\alpha_2 + 1)}{1 + ((w^{(1)}_0)^2-1) \alpha_1 +  ((w^{(1)}_0)^2-1) \alpha_2}}, \quad \alpha \in \mathbb Z^2_+.
 \eeqn
This provides $2$-variable counterpart of \cite[Lemma~6.1(ii)]{JS2001}.

For a polynomial $p(\alpha)=1 + b \alpha_1 
+ c \alpha_2 + d \alpha_1 \alpha_2$ with nonnegative real numbers $b, c, d,$  consider the weighted $2$-shift $\mathscr W_p$ with weights  
\beq \label{wts-special}
w^{(j)}_\alpha = \sqrt{\frac{p(\alpha 
+ \varepsilon_j)}{p(\alpha)}}, \quad \alpha \in \mathbb 
Z^2_+, ~j=1, 2.
\eeq
By Proposition~\ref{m-iso-wt-char}, $\mathscr W_p$ is a separate $2$-isometry. If $d >0,$ then $\mathscr W_p$ is never a toral $2$-isometry (see Corollary~\ref{last-coro}).
 \hfill $\diamondsuit$
 \end{remark}
 
 Let $T=(T_1, \ldots, T_n)$ be a commuting $n$-tuple on $H.$ 
We say that $T$ is {\it jointly subnormal} if there exist a Hilbert space $K$ containing $H$ and a commuting $n$-tuple $N$ of normal operators $N_1, \ldots, N_n$ on $K$ such that
\beqn
T_j = {N_j}|_{H}, \quad j=1, \ldots, n.
\eeqn
Assume that $T^*_jT_j$ is invertible for every $j=1, \ldots, n.$ Following \cite{S2001, CC2012}, 
we refer to the $m$-tuple $T^{\mathfrak{t}}:=
(T^{\mathfrak{t}}_1, \ldots, T^{\mathfrak{t}}_n)$ as the \textit{operator tuple
torally Cauchy dual} to $T$, where $T^{\mathfrak{t}}_j :=
T_j(T^*_jT_j)^{-1},$ $j=1, \ldots, n.$ 
\begin{remark} 
\label{app-Richter}
Let $T=(T_1, \ldots, T_n)$ be a separate $2$-isometry on $H.$ By Richter's lemma (see \cite[Lemma~1]{R1988}), 
\beq \label{expansion}
T^*_jT_j \Ge I, \quad j=1, \ldots, n.
\eeq
It follows that the operator $n$-tuple $T^{\mathfrak{t}}$
torally Cauchy dual to $T$ exists and 
\beq \label{contraction}
(T^{\mathfrak{t}}_j)^* T^{\mathfrak{t}}_j \Le I, \quad j=1, \ldots, n. 
\eeq
Note that the operator $n$-tuple 
torally Cauchy dual to $T^{\mathfrak{t}}$ exists and it is equal to $T.$
\hfill $\diamondsuit$
\end{remark}

Let $\mathscr W : \{w^{(j)}_\alpha\}$ be a weighted $n$-shift such that $\mathscr W^*_j \mathscr W_j$ is invertible for every $j=1, \ldots, n.$ The 
operator tuple $\mathscr W^{\mathfrak{t}}$
torally Cauchy dual to the weighted $n$-shift $W$ satisfies 
\beq \label{dual-action} \mathscr W^{\mathfrak{t}}_j e_\alpha =
\frac{1}{w^{(j)}_\alpha} \, e_{\alpha + \varepsilon_j}, \quad j=1, \ldots, n. 
\eeq
Hence, by \eqref{commuting}, $\mathscr W^{\mathfrak{t}}$ is commuting$:$ 
\beqn 
\mathscr W^{\mathfrak{t}}_i \mathscr W^{\mathfrak{t}}_j= \mathscr W^{\mathfrak{t}}_j \mathscr W^{\mathfrak{t}}_i,\quad 1 \Le i \neq  j \Le n. 
\eeqn 
Moreover, by \eqref{dual-action}, 
 \beq \label{reciprocal}
 \|(\mathscr W^{\mathfrak{t}})^\alpha e_0\|^2  = \frac{1}{\|\mathscr W^{\alpha} e_0\|^2}, \quad \alpha  \in \mathbb Z^n_+.
 \eeq

We now present a complete solution to the Cauchy dual subnormality problem for the class of toral $3$-isometric weighted $2$-shifts, which are separate $2$-isometries. 
\begin{theorem} \label{CDSP-2-iso-wt}
Let $\mathscr W : \{w^{(j)}_\alpha\}$ be a  toral $3$-isometric weighted $2$-shift.
If $\mathscr W$ is a separate $2$-isometry, then the operator tuple $\mathscr W^{\mathfrak{t}}$
torally Cauchy dual to $\mathscr W$ exists, and the following statements are equivalent$:$
\begin{enumerate}
\item[$\mathrm{(i)}$] the operator tuple $\mathscr W^{\mathfrak{t}}$
 is jointly subnormal, 
\item[$\mathrm{(ii)}$] either $1- (w^{(1)}_0)^2 - (w^{(2)}_0)^2 + (w^{(1)}_0)^2 (w^{(2)}_{\varepsilon_1})^2 = 0$ or 
$w^{(2)}_{\varepsilon_1} \Le w^{(2)}_0,$
 \item[$\mathrm{(iii)}$] either $\mathscr W$ is a toral $2$-isometry or $w^{(2)}_{\varepsilon_1} \Le w^{(2)}_0.$
 \end{enumerate}
\end{theorem}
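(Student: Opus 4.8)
The plan is to reduce the joint subnormality of $\mathscr W^{\mathfrak t}$ to the joint complete monotonicity of a reciprocal-of-polynomial net and then to quote Theorem~\ref{deg-1-1}. Since $\mathscr W$ is a separate $2$-isometry, each coordinate $\mathscr W_j$ is a $2$-isometry, so Richter's lemma gives $\mathscr W^*_j\mathscr W_j \Ge I$; in particular $\mathscr W^*_j\mathscr W_j$ is invertible, the torally Cauchy dual $\mathscr W^{\mathfrak t}$ exists, is commuting, and satisfies $(\mathscr W^{\mathfrak t}_j)^*\mathscr W^{\mathfrak t}_j \Le I$ (Remark~\ref{app-Richter}). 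Because $\mathscr W$ is moreover a toral $3$-isometry, Proposition~\ref{m-iso-wt-char} applies and yields $\|\mathscr W^\alpha e_0\|^2 = p(\alpha_1,\alpha_2)$, where $p(x,y)=1+bx+cy+dxy$ and $b,c,d$ are the quantities displayed in \eqref{value-d}. The inequality $\mathscr W^*_j\mathscr W_j\Ge I$ forces $(w^{(j)}_0)^2\Ge 1$, hence $b\Ge 0$ and $c\Ge 0$; and since $p$ is positive on $\mathbb Z^2_+$ while $b,c\Ge 0$, necessarily $d\Ge 0$, for otherwise $c+d\alpha_1<0$ for $\alpha_1$ large and $p(\alpha_1,\alpha_2)\rar-\infty$ as $\alpha_2\rar\infty$. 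Thus $p:\mathbb R^2_+\rar(0,\infty)$ has exactly the shape treated in Theorem~\ref{deg-1-1}.

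Next I would invoke the fact that a commuting jointly contractive weighted $n$-shift is jointly subnormal if and only if the net of its moments against the joint cyclic vector $e_0$ is a Hausdorff moment net, equivalently, by \cite[Proposition~4.6.11]{BCR1984}, a joint completely monotone net (see \cite{JL1979}); the weighted-shift structure is what lets a single representing measure serve all basis vectors. By \eqref{reciprocal}, the moment net of $\mathscr W^{\mathfrak t}$ is $\big\{\frac{1}{p(m,n)}\big\}_{m,n\in\mathbb Z_+}$, so statement (i) is equivalent to this net being joint completely monotone, which by Theorem~\ref{deg-1-1} is equivalent to $M:=bc-ad\Ge 0$ with $a=1$, i.e. to $bc-d\Ge 0$.

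It remains to decode $bc-d$ and relate the three conditions. Substituting the values \eqref{value-d} and simplifying collapses $bc-d$ to $(w^{(1)}_0)^2\big((w^{(2)}_0)^2-(w^{(2)}_{\varepsilon_1})^2\big)$; since $(w^{(1)}_0)^2>0$ and the weights are positive, $bc-d\Ge 0$ if and only if $w^{(2)}_{\varepsilon_1}\Le w^{(2)}_0$, which establishes the equivalence of (i) with the inequality $w^{(2)}_{\varepsilon_1}\Le w^{(2)}_0$. For (iii), by Remark~\ref{sep-toral-ex} the shift $\mathscr W$ is a toral $2$-isometry exactly when $\|\mathscr W^\alpha e_0\|^2$ has no $\alpha_1\alpha_2$-term, i.e. when $d=0$; thus (ii) and (iii) are the same disjunction. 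Finally $d=0$ already implies $w^{(2)}_{\varepsilon_1}\Le w^{(2)}_0$: solving $d=0$ gives $(w^{(2)}_{\varepsilon_1})^2 = 1+\big((w^{(2)}_0)^2-1\big)/(w^{(1)}_0)^2 \Le (w^{(2)}_0)^2$, using $(w^{(1)}_0)^2\Ge 1$ and $(w^{(2)}_0)^2\Ge 1$. Hence the disjunction in (ii) and (iii) is equivalent to $w^{(2)}_{\varepsilon_1}\Le w^{(2)}_0$, and (i)$\Leftrightarrow$(ii)$\Leftrightarrow$(iii) follows.

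The algebraic identities are the easy part; the step that needs care is the second one — the equivalence between joint subnormality of the contractive weighted $2$-shift $\mathscr W^{\mathfrak t}$ and the Hausdorff moment property of its moment net — since this is precisely the hinge that brings the moment-theoretic Theorem~\ref{deg-1-1} to bear. The rest amounts to organizing the identities \eqref{value-d} and \eqref{reciprocal} together with the normalization $\|\mathscr W^\alpha e_0\|^2 = p(\alpha)$ supplied by Proposition~\ref{m-iso-wt-char}.
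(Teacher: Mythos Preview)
Your argument is correct and follows essentially the same route as the paper: reduce to the moment net $\{1/p(m,n)\}$ via Proposition~\ref{m-iso-wt-char} and \eqref{reciprocal}, invoke the subnormality--Hausdorff-moment equivalence for contractive weighted multishifts, apply Theorem~\ref{deg-1-1}, and unwind $bc-d=(w^{(1)}_0)^2\big((w^{(2)}_0)^2-(w^{(2)}_{\varepsilon_1})^2\big)$. The one correction is the citation for the hinge step: the equivalence between joint subnormality of a contractive commuting weighted multishift and joint complete monotonicity of $\{\|T^\alpha e_0\|^2\}$ is \cite[Theorem~4.4]{Athavale1987} (combined with \eqref{W-beta} and \eqref{contraction}), not \cite{JL1979}; your additional observation that $d=0$ already forces $w^{(2)}_{\varepsilon_1}\Le w^{(2)}_0$, so that the disjunctions in (ii) and (iii) collapse, is a nice streamlining not made explicit in the paper.
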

\begin{proof}
Assume that $\mathscr W$ is a separate $2$-isometry. 
By Remark~\ref{app-Richter}, the operator tuple $\mathscr W^{\mathfrak{t}}$
torally Cauchy dual to $\mathscr W$ exists. 
Also, by Proposition~\ref{m-iso-wt-char}(ii) and \eqref{reciprocal},
\beq \label{last-eq}
 \|(\mathscr W^{\mathfrak{t}})^\alpha e_0\|^2  = 
\frac{1}{1 + \alpha_1 b + 
 \alpha_2 (c + \alpha_1 d)}, \quad \alpha=(\alpha_1, \alpha_2) \in \mathbb Z^2_+,
 \eeq
 where $b, c, d$ are given by  \eqref{value-d}. 
Note that by \eqref{expansion}, $b \Ge 0$ and $c \Ge 0.$
 One may now apply \cite[Theorem 4.4]{Athavale1987} together with \eqref{W-beta} and \eqref{contraction} to see that 
  \begin{align} \label{subnor-if-r}
   \begin{minipage}{67ex}
\text{$\mathscr W^{\mathfrak{t}}$ is jointly subnormal if and only if $\{\|(\mathscr W^{\mathfrak{t}})^\alpha e_0\|^2\}_{\alpha \in \mathbb Z^2_+}$ is a joint} 
\\
\text{completely monotone net} 
\end{minipage}
 \end{align}
(see also the discussion prior to \cite[Eqn~(E)]{Athavale2001}). On the other hand, by  Theorem~\ref{deg-1-1} and \eqref{last-eq}, 
 \begin{align} \label{subnor-CDSP}
   \begin{minipage}{67ex}
\text{$\{\|(\mathscr W^{\mathfrak{t}})^\alpha e_0\|^2\}_{\alpha \in \mathbb Z^2_+}$ is joint completely monotone $\Longleftrightarrow$ $d \Le bc.$}  
 \end{minipage}
 \end{align} 
 To get the equivalence of (i) and (ii), note that if $d \neq 0,$ then by \eqref{value-d}, $d \Le bc$ if and only if 
\beqn
(w^{(1)}_0)^2((w^{(2)}_{\varepsilon_1})^2-1) \Le c(b+1) =((w^{(2)}_0)^2-1)(w^{(1)}_0)^2, 
\eeqn
which is equivalent to 
\beqn
(w^{(1)}_0)^2((w^{(2)}_0)^2 - (w^{(2)}_{\varepsilon_1})^2) \Ge 0 ~\Longleftrightarrow w^{(2)}_{\varepsilon_1} \Le w^{(2)}_0.
\eeqn
The equivalence of (i) and (ii) now follows from \eqref{value-d}, \eqref{subnor-if-r} and \eqref{subnor-CDSP}.
Finally, since a separate $2$-isometry $\mathscr W$ is a toral $2$-isometry if and only if  $d=0$ (see Proposition~\ref{m-iso-wt-char}(ii)), the equivalence of (ii) and (iii) is immediate. 
\end{proof}
\begin{remark} 
By \eqref{commuting}, $w^{(2)}_{\varepsilon_1} \Le w^{(2)}_0$ if and only if $w^{(1)}_{\varepsilon_2} \Le w^{(1)}_0.$ 
 \hfill $\diamondsuit$
\end{remark}

The first part of the following corollary may also be deduced from \cite[Proposition~6]{AS1999}.
\begin{corollary} \label{yes-C-dual}
Under the hypotheses of Theorem~\ref{CDSP-2-iso-wt}, the following statements are valid$:$
\begin{enumerate}
\item[$\mathrm{(i)}$] if $\mathscr W$ is a toral $2$-isometry, $\mathscr W^{\mathfrak{t}}$
 is jointly subnormal, 
\item[$\mathrm{(ii)}$] if $\mathscr W$ is not a toral $2$-isometry, then $\mathscr W^{\mathfrak{t}}$
 is jointly subnormal if and only if 
$w^{(2)}_{\varepsilon_1} \Le w^{(2)}_0.$
\end{enumerate}
\end{corollary}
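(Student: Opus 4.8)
The plan is to obtain both parts as immediate consequences of Theorem~\ref{CDSP-2-iso-wt}, the only extra ingredient being the description of when a separate $2$-isometric weighted $2$-shift is actually a toral $2$-isometry. First I would recall from Proposition~\ref{m-iso-wt-char}(ii) (equivalently, from Corollary~\ref{last-coro} applied to $\mathscr W$) that, under the standing hypothesis that $\mathscr W$ is a separate $2$-isometry, $\mathscr W$ is a toral $2$-isometry precisely when the scalar $d$ of \eqref{value-d} vanishes, in which case \eqref{wt-3-iso-new} collapses to $\|\mathscr W^\alpha e_0\|^2 = 1 + b\alpha_1 + c\alpha_2$ with $b, c \Ge 0$.

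For part~(i), assuming $\mathscr W$ is a toral $2$-isometry, the first alternative in condition~(iii) of Theorem~\ref{CDSP-2-iso-wt} holds trivially, so the equivalence (i)$\Leftrightarrow$(iii) in that theorem shows at once that $\mathscr W^{\mathfrak{t}}$ is jointly subnormal. If a self-contained argument is preferred, one can instead note that \eqref{reciprocal} gives $\|(\mathscr W^{\mathfrak{t}})^\alpha e_0\|^2 = (1 + b\alpha_1 + c\alpha_2)^{-1}$, apply Theorem~\ref{deg-1-1} with $d = 0$ and $M = bc \Ge 0$ to see that this net is joint completely monotone, and then invoke \eqref{subnor-if-r}; this also reproves \cite[Proposition~6]{AS1999}.

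For part~(ii), assuming $\mathscr W$ is not a toral $2$-isometry, the first clause of condition~(iii) of Theorem~\ref{CDSP-2-iso-wt} fails, so (iii) there reduces to the lone inequality $w^{(2)}_{\varepsilon_1} \Le w^{(2)}_0$; invoking (i)$\Leftrightarrow$(iii) of Theorem~\ref{CDSP-2-iso-wt} once more yields exactly the asserted criterion. Since the substance is already carried by Theorem~\ref{CDSP-2-iso-wt}, no step here is genuinely difficult; the only thing to watch is the correct bookkeeping of the dichotomy \emph{$d = 0$} versus \emph{$d \neq 0$} --- i.e. toral $2$-isometry versus not --- which is exactly what Proposition~\ref{m-iso-wt-char}(ii) supplies.
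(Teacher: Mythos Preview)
Your proposal is correct and follows essentially the same approach as the paper: the corollary is stated without proof there, being an immediate logical consequence of the trichotomy in Theorem~\ref{CDSP-2-iso-wt}(iii), and you have spelled out precisely that deduction (including the identification of the toral $2$-isometry case with $d=0$, which the paper records at the end of the proof of Theorem~\ref{CDSP-2-iso-wt}). Your optional self-contained argument for part~(i) via Theorem~\ref{deg-1-1} is a minor bonus that the paper alludes to only by citing \cite[Proposition~6]{AS1999}.
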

 
 We conclude the section with an example of a toral $3$-isometry for which the operator tuple torally Cauchy dual is not jointly subnormal. 
 \begin{example} \label{not-C-dual} For real numbers $a, b, c, d >0,$
consider the polynomial $p: \mathbb R^2_+ \rar (0, \infty)$ given by $$p(\alpha)=1 + \alpha_1 b + 
 \alpha_2 (c + \alpha_1 d).$$  
 Let $\mathscr W_p$ be the weighted $2$-shift with weights given by \eqref{wts-special}. By Proposition~\ref{m-iso-wt-char}, $\mathscr W_p$ is a joint $3$-isometry, which is also a separate $2$-isometry. 
Further,  
by the discussion following \eqref{subnor-CDSP}, $w^{(2)}_{\varepsilon_1} \Le w^{(2)}_0$ if and only if $d \Le bc.$
It is now clear from Theorem~\ref{CDSP-2-iso-wt} that there exist joint $3$-isometries $\mathscr W$ for which $\mathscr W^{\mathfrak{t}}$ is not jointly subnormal (for example, let $b=1,$ $c=2$ and $d=3$). \eof 
\end{example}

\section{Joint complete monotonicity and a coefficient-matrix}

A solution to the 
Cauchy dual subnormality problem for toral $3$-isometric weighted $2$-shifts requires characterization of polynomials $p : \mathbb R^2_+ \rar (0, \infty)$ of bi-degree at most $(2, 2)$ for which $\big\{\frac{1}{p(m, n)}\big\}_{m,n \in \mathbb{Z}_+}$ is joint completely monotone. We have already seen one special instance of this problem (see Theorem~\ref{CDSP-2-iso-wt}). 
In the remaining part of this paper, we briefly discuss role of  the coefficient-matrix in the classification of the polynomials $p : \mathbb R^2_+ \rar (0, \infty)$ for which $\frac{1}{p}$ is joint completely monotone. 
For a positive integer $N$ and $p_N(x, y) = \sum_{m, n=0}^N a_{m, n} x^m y^n,$ consider the {\it coefficient-matrix} $A_{p_N} = (a_{m, n})_{m, n =0}^N$ associated with $p_N.$ The equivalence of (i) and (ii) of Theorem~\ref{deg-1-1} can be rephrased as follows$:$ 
\begin{theorem} \label{matrix-i} The function
$\frac{1}{p_1}$ is joint completely monotone if and only if 
$\det A_{p_1} \Le 0.$
\end{theorem}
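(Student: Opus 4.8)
The plan is to unwind the definitions so that Theorem~\ref{matrix-i} becomes a literal restatement of the already-established equivalence (i)$\Leftrightarrow$(ii) of Theorem~\ref{deg-1-1}. Write $p_1(x,y) = \sum_{m,n=0}^1 a_{m,n}x^m y^n = a_{0,0} + a_{1,0}x + a_{0,1}y + a_{1,1}xy$. Comparing with the normal form $p(x,y) = a + bx + cy + dxy$ used in Theorem~\ref{deg-1-1}, we read off $a = a_{0,0}$, $b = a_{1,0}$, $c = a_{0,1}$, $d = a_{1,1}$. The coefficient-matrix is
\[
A_{p_1} = \begin{pmatrix} a_{0,0} & a_{0,1} \\ a_{1,0} & a_{1,1}\end{pmatrix},
\]
so $\det A_{p_1} = a_{0,0}a_{1,1} - a_{0,1}a_{1,0} = ad - bc = -M$, where $M = bc - ad$ is the quantity appearing in Theorem~\ref{deg-1-1}(ii). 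Hence $\det A_{p_1} \Le 0$ is exactly the condition $M \Ge 0$.

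The one genuine point to address is that Theorem~\ref{matrix-i} is stated for an arbitrary polynomial $p_1$ of bi-degree at most $(1,1)$ mapping $\mathbb R^2_+$ into $(0,\infty)$, whereas Theorem~\ref{deg-1-1} carries along the a~priori sign information $a > 0$, $b,c,d \Ge 0$ from \eqref{abcd-positive}; but that information is itself derived inside the proof of Theorem~\ref{deg-1-1} purely from the hypothesis that $p_1$ maps $\mathbb R^2_+$ into $(0,\infty)$ (evaluate at $(0,0)$; divide $p_1(x,0)$, $p_1(0,y)$, $p_1(x,x)$ by the appropriate power of $x$ and let $x \rar \infty$). So no extra assumption is being smuggled in: for any polynomial $p_1$ of bi-degree at most $(1,1)$ with $p_1(\mathbb R^2_+) \subseteq (0,\infty)$, the hypotheses of Theorem~\ref{deg-1-1} are met.

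Putting this together: by the implication (i)$\Leftrightarrow$(ii) of Theorem~\ref{deg-1-1}, $\frac{1}{p_1}$ is joint completely monotone if and only if $M = bc - ad \Ge 0$, which by the computation above is equivalent to $\det A_{p_1} = ad - bc \Le 0$. This completes the proof. I do not expect any real obstacle here — the content is entirely contained in Theorem~\ref{deg-1-1}, and the only ``work'' is the bookkeeping identification $\det A_{p_1} = ad - bc$ and the observation that the positivity constraints on the coefficients are consequences of, not additions to, the standing hypothesis on $p_1$.
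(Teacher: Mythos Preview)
Your proposal is correct and is exactly the paper's approach: the paper does not give a separate proof of Theorem~\ref{matrix-i} at all, but simply introduces it with the sentence ``The equivalence of (i) and (ii) of Theorem~\ref{deg-1-1} can be rephrased as follows,'' so the content is precisely the bookkeeping identification $\det A_{p_1} = ad - bc = -M$ that you carry out. Your extra paragraph verifying that the sign constraints \eqref{abcd-positive} follow from $p_1(\mathbb R^2_+)\subseteq(0,\infty)$ is a sound observation but not something the paper spells out again here.
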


Consider a polynomial $p_2$ of the form $(x+b_1)(x+b_2) + (x+a_1)(x+a_2)y,$ where $0 < a_1 \Le a_2$ and $0 < b_1 \Le b_2.$ Note that 
\beqn
A_{p_2} = \begin{pmatrix}
b_1b_2 & a_1a_2  & 0\\
b_1 + b_2 & a_1 + a_2 & 0 \\
1 & 1 & 0
\end{pmatrix}.
\eeqn
Clearly, $\det A_{p_2} =0.$ If $B_{p_2}$ denotes the minor of $A_{p_2}$ obtained after excluding the third row and the third column, then  
$$\det B_{p_2} = b_1b_2(a_1+a_2) - a_1a_2(b_1+b_2).$$  
Note that the condition $\det B_{p_2} \Le 0$ is precisely the condition \eqref{necessary-c-2} with $k=2.$  Hence, by Lemma~\ref{lem-necessary}, $\det B_{p_2} \Le 0$ is a necessary condition for $\frac{1}{p_2}$ to be joint completely monotone.   
On the other hand, by Remark~\ref{following}, $\frac{1}{p_2}$ is joint completely monotone provided \eqref{assum-2-deg} holds. Thus the condition 
\eqref{assum-2-deg} implies that $\det B_{p_2} \Le 0.$ 
Summarizing the discussion above, we have
\beq \label{matrix-s-n}
\eqref{assum-2-deg} \Longrightarrow \text{$\frac{1}{p_2}$ is joint completely monotone} \Longrightarrow \det B_{p_2} \Le 0.
\eeq
Interestingly, $\det B_{p_2} \Le 0$ neither ensures \eqref{assum-2-deg} nor the joint  complete monotonicity of $\frac{1}{p_2}$ (for the first assertion, consider $a_1 = 6,$ $a_2=9,$ $b_1=1$ and $b_2=5,$ and for the second one, see Example~\ref{exam-1.3-i}(i)). Needless to say, the problem of finding same set of sufficient and necessary conditions for $\frac{1}{p_2}$ to be joint completely monotone remains unresolved. 

\begin{acknowledgments}
We convey our sincere thanks to Md. Ramiz Reza and Paramita Pramanick for several useful comments on the initial draft of this paper.
\end{acknowledgments}

{}

\end{document}